\documentclass[12pt,oneside,reqno]{amsart}
\usepackage[margin=1in]{geometry}
\usepackage{color}
\usepackage{tikz-cd}
\usetikzlibrary{arrows.meta, positioning}
\usepackage{esint,amssymb}
\usepackage{graphicx}
\usepackage{MnSymbol}
\usepackage{mathtools}
\usepackage[colorlinks=true, pdfstartview=FitV, linkcolor=blue, citecolor=pink, urlcolor=blue,pagebackref=false]{hyperref}
\usepackage{microtype}
\usepackage{amsmath}
\usepackage{xifthen}
\usepackage{verbatim}
\definecolor{darkgreen}{rgb}{0,0.5,0}
\definecolor{darkblue}{rgb}{0,0,0.7}
\definecolor{darkred}{rgb}{0.9,0.1,0.1}
\usepackage[all]{xy}
\usepackage[makeroom]{cancel}
\usepackage{enumitem}
\usepackage{natbib}
\makeatletter
\def\@tocline#1#2#3#4#5#6#7{\relax
  \ifnum #1>\c@tocdepth % then omit
  \else
    \par \addpenalty\@secpenalty\addvspace{#2}%
    \begingroup \hyphenpenalty\@M
    \@ifempty{#4}{%
      \@tempdima\csname r@tocindent\number#1\endcsname\relax
    }{%
      \@tempdima#4\relax
    }%
    \parindent\z@ \leftskip#3\relax \advance\leftskip\@tempdima\relax
    \rightskip\@pnumwidth plus4em \parfillskip-\@pnumwidth
    #5\leavevmode\hskip-\@tempdima
      \ifcase #1
       \or\or \hskip 1em \or \hskip 2em \else \hskip 3em \fi%
      #6\nobreak\relax
    \dotfill\hbox to\@pnumwidth{\@tocpagenum{#7}}\par
    \nobreak
    \endgroup
  \fi}
\makeatother

\newtheorem{theorem}{Theorem}
\newtheorem{proposition}[theorem]{Proposition}
\newtheorem{lemma}[theorem]{Lemma}
\newtheorem{corollary}[theorem]{Corollary}

\theoremstyle{definition}
\newtheorem{remark}[theorem]{Remark}

\newtheorem{definition}[theorem]{Definition}

\newcommand{\cref}[1]{Corollary~\ref{c.#1}}

\numberwithin{equation}{section}
\numberwithin{theorem}{section}

\newcommand{\Z}{\mathbb{Z}}

\newcommand{\R}{\mathbb{R}}

\newcommand{\C}{\mathbb{C}}

\renewcommand{\subset}{\subseteq}
\newcommand{\h}{\mathfrak{H}}

\newcommand{\eps}{\varepsilon}

\newcommand{\test}[1][]{%
\ifthenelse{\equal{#1}{}}{omitted}{given}%
}

\renewcommand{\d}[1]{\ensuremath{\operatorname{d}\!{#1}}}
\newcommand{\pa}{\partial}
\newcommand{\del}{\partial}

\renewcommand{\part}{\partial}

\usepackage{dsfont}
\usepackage{slashed}

\newcommand{\Tr}{\operatorname{Tr}}

\newcommand{\End}{\operatorname{End}}

\newcommand{\Id}{\operatorname{Id}}
\newcommand{\Hom}{\operatorname{Hom}}

\newcommand{\HK}{\operatorname{HK}}
\renewcommand{\d}{\dagger}
\newcommand{\hb}{\bar{h}}
\renewcommand{\dim}{\operatorname{dim}}
\newcommand{\delbar}{\bar\pa}
\newcommand{\Hod}{\operatorname{Hod}}
\newcommand{\dR}{\operatorname{dR}}

\newcommand{\rk}{\operatorname{rk}}
\newcommand{\GL}{\operatorname{GL}}

\begin{document}
\title{The conformal limit for Nakajima quiver varieties}

\author[Sotiria Chatzimarkou and Panagiotis Dimakis]{Sotiria Chatzimarkou and Panagiotis Dimakis}
\keywords{}
\subjclass[2010]{}
\date{\today}
\begin{abstract}
Inspired by Gaiotto's conformal limit construction for Higgs bundles we introduce and study a conformal limit construction for Nakajima quiver varieties. We prove that the conformal limit is indeed a limit of a one-parameter family of points inside a specified quiver variety and that it gives a biholomorphic map between holomorphic Lagrangian submanifolds foliating two related quiver varieties. Finally, we introduce the analog of Simpson's conjecture on the completeness of these holomorphic Lagrangian submanifolds and provide a complete proof under a genericity assumption. Central to both proofs is the existence of a Bia\l{}ynicki-Birula slice associated to fixed points of a natural circle action. 
\end{abstract}

\maketitle

\begin{center}
\textit{In memory of Stavros Chatzimarkos}
\end{center}

\tableofcontents

\section{Introduction}

On a compact Riemann surface $\Sigma$ of genus $g\ge 2$ equipped with a complex vector bundle $E$ of rank $n$ and degree zero one can define the Hodge moduli space $M_{Hod}$ parametrizing polystalbe $\lambda$-connections. $M_{Hod}$ admits a natural $\C^{\star}$--action and a $\C^{\star}$--equivariant map 
\begin{equation*}
\pi:M_{Hod} \to \C.
\end{equation*}
The fiber $\pi^{-1}(0)$ is the moduli space $M_H$ parametrizing polystable Higgs bundles and the fiber $\pi^{-1}(1)$ is the de Rham moduli space $M_{dR}$ parametrizing completely reducible complex flat connections. Given a stable Higgs bundle $(\bar\pa_E,\Phi)$ with harmonic metric $h$, consider the twistor line 
\begin{equation*}
[(\lambda, \bar\pa_E + \lambda\Phi^{\dagger_h}, \lambda\pa_E^{\dagger_h} + \Phi)] \in \pi^{-1}(\lambda). 
\end{equation*}
Combining the $\C^{\star}$--action and the existence of twistor lines we can perform the following set of operations 

\begin{equation}\label{snake}
\begin{tikzcd}
(\bar\pa_E,\Phi) \arrow{r} &
(\bar\pa_E,R\Phi) \arrow{r} &
(\lambda, \bar\pa_E+ \lambda R\Phi^{\dagger_{h_R}}, \lambda\pa^{\dagger_{h_R}}+R\Phi) \arrow{dll} \\
(1, \bar\pa_E+ \lambda R\Phi^{\dagger_{h_R}}, \pa^{\dagger_{h_R}}+\lambda^{-1}R\Phi) \arrow{rr}{\hbar:=\lambda R^{-1}}
& &\nabla_{R,\hbar} = \bar\pa_E+ \pa_E^{\dagger_{h_R}} + \hbar R^2\Phi^{\dagger_{h_R}} +\hbar^{-1}\Phi
\end{tikzcd}
\end{equation}
in order to obtain a family of complex flat connections indexed by $R>0$ and $\hbar\in \C^{\star}$ a constant. It was conjectured by Gaiotto in \cite{Gaiotto} and later proven in \cite{DFKMMN} that when $(\bar\pa_E,\Phi)$ lies in the Hitchin section, the limit 
\begin{equation*}
\lim\limits_{R\to 0 } \nabla_{R,\hbar}
\end{equation*}
exists and provides a biholomorphism from the Hitchin section to the space of Opers in $M_{dR}$. In \cite{CW} this theorem was generalized to the following:
\begin{theorem}
Let $W(\bar\pa_0,\Phi_0)$ be the set of points in $M_{Hod}$ whose $\C^{\star}$--limit is the $\C^{\star}$--fixed point $[(\bar\pa_0,\Phi_0)]$ with harmonic metric $h_0$. Then:
\begin{itemize}
\item the restrictions $W^0(\bar\pa_0,\Phi_0) = W(\bar\pa_0,\Phi_0)\cap\pi^{-1}(0)$ and $W^1(\bar\pa_0,\Phi_0) = W(\bar\pa_0,\Phi_0)\cap \pi^{-1}(1)$ are holomorphic Lagrangian submanifolds of $M_H$ and $M_{dR}$ respectively. 
\item For $[(\bar\pa_E,\Phi)]\in W^0(\bar\pa_0,\Phi_0)$ the conformal limit exists. 
\item There is a particular representative $(\bar\pa_E,\Phi)$ of the equivalence class $[(\bar\pa_E,\Phi)]$ for which the conformal limit takes the following expression
\begin{equation}\label{HCM}
\mathcal {CL}_{\hbar}(\bar\pa_E,\Phi) := \lim\limits_{R\to 0} \nabla_{R,\hbar} = \bar\pa_E + \pa_0^{\dagger_{h_0}} + \hbar\Phi_0^{\dagger_{h_0}} + \hbar^{-1}\Phi.
\end{equation}
\item $\mathcal {CL}_{\hbar}$ is a biholomorphic map from $W^0(\bar\pa_0,\Phi_0)$ to $W^1(\bar\pa_0,\Phi_0)$. 
\end{itemize}
\end{theorem}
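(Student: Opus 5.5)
The plan is to follow the strategy of Collier--Wentworth, with the Bia\l{}ynicki-Birula slice at the fixed point $(\bar\pa_0,\Phi_0)$ as the organizing tool.

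\textbf{Step 1: weight splitting.} Since $(\bar\pa_0,\Phi_0)$ is $\C^\star$-fixed, there is a holomorphic splitting $E=\bigoplus_j E_j$. This splitting is orthogonal for the harmonic metric $h_0$, and $\Phi_0$ has weight one: $\Phi_0:E_j\to E_{j+1}\otimes K$. The generating vector field induces a grading on the deformation complex
\begin{equation*}
C^\bullet:\ \Omega^0(\End E)\to\Omega^{0,1}(\End E)\oplus\Omega^{1,0}(\End E)\to\Omega^{1,1}(\End E).
\end{equation*}

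\textbf{Step 2: the two slices.} Let $H^1_{>0}$ be the positive-weight part of harmonic first cohomology, which is the upward tangent direction. Define the Higgs slice as the pairs
\begin{equation*}
(\bar\pa_0+\beta,\ \Phi_0+\phi),
\end{equation*}
with $(\beta,\phi)$ of positive weight, satisfying the Higgs equation and a Coulomb gauge condition relative to $h_0$. Define the de Rham slice as the connections
\begin{equation*}
\bar\pa_0+\beta+\pa_0^{\dagger_{h_0}}+\Phi_0^{\dagger_{h_0}}+\Phi_0+\phi,
\end{equation*}
with the same kind of gauge condition. Flatness reduces to $\bar\pa_0\phi+[\beta,\Phi_0+\phi]=0$, because $\pa_0^{\dagger}+\Phi_0^{\dagger}$ has nonpositive weight and the remaining curvature cross terms cancel for weight reasons.

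Using nilpotency of positive-weight endomorphisms and an implicit function argument graded by weight, I will show that each slice is a holomorphic graph over $H^1_{>0}$. I will also show that it maps injectively and biholomorphically onto $W^0$, respectively $W^1$. Surjectivity should come from the $\C^\star$-flow: any point of $W^0$ can be gauged into the slice near the fixed point, and then the flow is used to move it there.

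\textbf{Step 3: Lagrangian property.} Isotropy follows because the symplectic form pairs weight $k$ with weight $1-k$ (the form has weight one), so the form vanishes on positive weights. Half-dimensionality comes from the weight-symmetry of $H^1$ induced by the symplectic pairing.

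\textbf{Step 4: the limit.} This step I expect to be the main obstacle. For $(\bar\pa_E,\Phi)$ in the slice, I rescale by $R$ and gauge by $g_R=R^{-\text{weight}}$. This produces the pair
\begin{equation*}
\bigl(\bar\pa_0+R^{\bullet}\beta,\ R\Phi_0+\ldots\bigr),
\end{equation*}
which approaches the fixed point. The key estimate is $g_R^*h_R=h_0(1+O(R^\epsilon))$ in $C^k$, proved by perturbing the Hitchin equation around $h_0$. Here I would linearize $F_h+[\Phi,\Phi^{\dagger_h}]$ and invoke invertibility of the Laplacian on the orthogonal complement of the stabilizer, which is guaranteed by stability.

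Substituting into $\nabla_{R,\hbar}$ and undoing the gauge, the terms $\bar\pa_E$ and $\hbar^{-1}\Phi$ stay fixed. The term $\pa^{\dagger_{h_R}}$ tends to $\pa_0^{\dagger_{h_0}}$, and $\hbar R^2\Phi^{\dagger_{h_R}}$ tends to $\hbar\Phi_0^{\dagger_{h_0}}$, because only the weight-one component survives after rescaling. This gives formula \eqref{HCM}.

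\textbf{Step 5: biholomorphism.} Formula \eqref{HCM} shows that $\mathcal{CL}_\hbar$ is exactly the slice map from Step 2, up to rescaling $\Phi$ by $\hbar^{-1}$. That map is holomorphic and bijective between the two slices, hence $\mathcal{CL}_\hbar$ is a biholomorphism $W^0\to W^1$.
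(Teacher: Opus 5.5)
Your architecture is the one the paper attributes to \cite{CW} and imitates for quivers: a Bia\l{}ynicki-Birula slice, its $\C^\star$-equivariance, and $\mathcal{CL}_\hbar$ read as a map between two slices. The paper itself only quotes this theorem. The genuine gap is in Step~4. Let $h_0(1+\eta_R)$, with $\eta_R$ $h_0$-self-adjoint, be the harmonic metric of the gauged pair $g_R\cdot(\bar\pa_E,R\Phi)$; this is your $g_R^*h_R$. Split $\eta_R=\sum_m\eta_{R,m}$ into components $E_j\to E_{j+m}$; with your normalization, $g_RXg_R^{-1}=R^{-m}X$ for such $X$. Undoing the gauge gives
\begin{equation*}
\hbar R^2\Phi^{\dagger_{h_R}}=\hbar\Phi_0^{\dagger_{h_0}}+\hbar\sum_m R^{m}[\Phi_0^{\dagger_{h_0}},\eta_{R,m}]+\cdots,\qquad \pa_E^{h_R}=\pa_0+\sum_m R^{m}\pa_0\eta_{R,m}+\cdots .
\end{equation*}
So the components with $m<0$ are multiplied by $R^{-|m|}$, and since $\eta_{R,-m}=\eta_{R,m}^{\dagger_{h_0}}$ they are no smaller than those with $m>0$. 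Undoing the gauge amplifies part of the correction rather than leaving ``only the weight-one component''. Your bound $O(R^{\epsilon})$, or any ungraded $O(R^k)$ once degrees $|m|\ge k$ occur, therefore gives no convergence. What you need is the graded estimate $\eta_{R,m}=o(R^{|m|})$ in $C^1$.

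The graded estimate is where the Coulomb condition $D'(\beta,\phi)=0$ enters. Step~4 never uses it, yet it cannot be dispensable: gauge transformations $\exp(u)$, with $u$ in the same upward weights as $\beta$, preserve the positive-weight form of the pair but not the right-hand side of \eqref{HCM}. The mechanism is as follows. After rescaling, the pair is $(\bar\pa_0+\beta_R,\Phi_0+\phi_R)$ with $(\beta_R,\phi_R)$ still in the slice. The Hitchin error at $h_0$ has linear part $D'(\beta_R,\phi_R)$ plus its adjoint, which vanishes. Its degree-$m$ quadratic part consists of terms $\beta_a\beta_b^{\dagger}$, $\phi_a\phi_b^{\dagger}$ with $a-b=m$, each carrying at least $R^{|m|+2}$. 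The linearized operator at $h_0$ preserves degree, because the splitting is $h_0$-parallel and $\Phi_0,\Phi_0^{\dagger_{h_0}}$ have opposite degrees; it is invertible by stability. So the error can be removed degree by degree before applying the inverse function theorem. This is exactly the paper's Lemma~\ref{metric} in the quiver setting, with the bound $R^{|m|+2}$ of \eqref{metricansatz}.

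Step~2 also needs repair on the de Rham side. The curvature of $\bar\pa_0+\beta+\pa_0+\Phi_0^{\dagger}+\Phi_0+\phi$ is $D''(\beta,\phi)+[\beta,\phi]+D'(\beta,\phi)$. The term $D'(\beta,\phi)=\pa_0\beta+[\Phi_0^{\dagger},\phi]$ lives in the same degrees as the Higgs-equation terms, so nothing cancels for weight reasons. It vanishes because it is the Coulomb condition (K\"ahler identities), so that must be the gauge condition you impose.

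More seriously, the $\C^\star$-flow cannot give injectivity and surjectivity onto $W^1$ from a slice living only over $\lambda=1$. The flow of a point of $W^1$ leaves $\pi^{-1}(1)$ and reaches the fixed point through $\pi^{-1}(t)$ as $t\to0$. The missing ingredient is the interpolating family
\begin{equation*}
(\lambda,\ \bar\pa_0+\beta+\lambda\Phi_0^{\dagger},\ \lambda\pa_0+\Phi_0+\phi),
\end{equation*}
whose integrability defect is $D''(\beta,\phi)+[\beta,\phi]+\lambda D'(\beta,\phi)$ and which is $\C^\star$-equivariant for the linear action on the slice. Together with a local slice theorem for $M_{Hod}$ at $(0,\bar\pa_0,\Phi_0)$, flowing to small $\lambda$ then gives the bijection onto $W^1$ as for $W^0$. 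Likewise, isotropy of $W^1$ cannot use a weight of the symplectic form, since there is no $\C^\star$-action on $M_{dR}$. Instead evaluate the Goldman form on slice tangent vectors: it reduces to $\int\Tr(\dot\beta_1\wedge\dot\phi_2+\dot\phi_1\wedge\dot\beta_2)$, which vanishes by degree.
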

Central in the proof is the introduction of a particular set of representatives for the points in $W^0(\bar\pa_0,\Phi_0)$ called the Bia\l{}ynicki-Birula slice. 

More general conformal limit constructions have recently been obtained in \cite{CFW} for parabolic Higgs bundles, in \cite{KW} for Higgs bundles whose $\C^{\star}$--limit is polystable and in \cite{KM} for Higgs bundles lying in certain Cayley components. Apart from its inherent beauty and simplicity, the conformal limit map provides a way to endow the manifolds $W^1(\bar\pa_0,\Phi_0)$ with coordinates, a fact which was essential in the proof in rank two \cite{DS} of Simpson's conjecture \cite{Simpson} that the $W^1(\bar\pa_0,\Phi_0)$ are complete submanifolds of $M_{dR}$. 

Even though the existence of a conformal limit was conjectured only in the realm of Higgs bundles, it is apparent from \eqref{snake} that an analog of the family $\nabla_{R,\hbar}$ can be constructed for any hyperk\"ahler manifold admitting a $\C^{\star}$--action with the extra property that this $\C^{\star}$--action extends to its twistor space. Thus, given such a hyperk\"ahler manifold, it is meaningful to ask whether a conformal limit exists and whether it has certain desirable properties. 

The purpose of the present paper is to carry out the construction of a conformal limit map for a class of hyperk\"ahler manifolds called Nakajima quiver varieties \cite{Na1994}. We chose to focus on Nakajima quiver varieties primarily because they are the only general family of hyperk\"ahler manifolds with geometric features very similar to Higgs bundle moduli spaces and whose structure is relatively well understood. 

Nakajima quiver varieties are constructed as hyperk\"ahler quotients of a vector space $\mathbb M$ parametrizing all framed representations of a doubled quiver. Specifically, $\mathbb M$ is hyperk\"ahler and admits a compatible group action by $G_{\mathbf{v}}$. As a result it admits a hyperk\"ahler moment map 
\begin{equation*}
\mu = (\mu_{\R}, \mu_{\C}): \mathbb M \longmapsto \mathfrak g_{\mathbf{v}}\oplus \mathfrak g_{\mathbf{v}}^{\C}.
\end{equation*}
Given $(\zeta_{\R},\zeta_{\C}) \in Z\oplus(Z\otimes\C)$ where $Z$ is the center of $G_{\mathbf{v}}$, we define the Nakajima quiver variety as 
\begin{equation*}
\mathfrak M_{(\zeta_{\R},\zeta_{\C})}:= \{ \mu_{\R}^{-1}(\zeta_{\R})\cap\mu_{\C}^{-1}(\zeta_{\C})\}/G_{\mathbf{v}}.
\end{equation*}
Under a suitable genericity assumption on $(\zeta_{\R},\zeta_{\C})$, these are complete hyperk\"ahler manifolds and when $\zeta_{\C}=0$ they admit a $\C^{\star}$-action. The analogue of the Hodge moduli space is given by 
\begin{equation*}
M^Q_{Hod} := \bigcup\limits_{\xi\in \C} \mathfrak M_{((1-|\xi|^2)\zeta_{\R}, -2i\xi\zeta_{\R})}
\end{equation*}
and the twistor line for $(B_h,i_k,B_{\bar h},j_k) \in \mu_{\R}^{-1}(\zeta_{\R})\cap\mu_{\C}^{-1}(0)$ is given by 
\begin{equation*} (B_h - \xi B_{\bar h}^{\dagger}, i_k - \xi j_k^{\dagger}, B_{\bar h} + \xi B_h^{\dagger}, j_k+ \xi i_k^{\dagger})\in \mu_{\R}^{-1}((1-|\xi|^2)\zeta_{\R})\cap\mu_{\C}^{-1}(-2i\xi\zeta_{\R}).
\end{equation*}
Therefore, we can build the same family as in \eqref{snake} for quiver varieties 
\begin{equation}\label{snake2}
\begin{tikzcd}
p= (B_h,i_k,B_{\bar h},j_k) \arrow{d} \\
(B_h,i_k,RB_{\bar h},Rj_k) \arrow{d} \\
(B_h - \xi R B_{\bar h}^{\dagger_{h_R}}, i_k - \xi Rj_k^{\dagger_{h_R}}, RB_{\bar h} + \xi B_h^{\dagger_{h_R}}, Rj_k+ \xi i_k^{\dagger_{h_R}}) \arrow{d}{\hbar = \xi R^{-1}} \\
(B_h - \hbar R^2 B_{\bar h}^{\dagger_{h_{R,\xi}}}, i_k - \hbar R^2j_k^{\dagger_{h_{R,\xi}}}, \hbar^{-1}B_{\bar h} +  B_h^{\dagger_{h_{R,\xi}}}, \hbar^{-1}j_k+ i_k^{\dagger_{h_{R,\xi}}}) = F_{R,\hbar}(p). 
\end{tikzcd}
\end{equation}
and ask whether the limit exists and what properties it may possess. 
\begin{remark}
Notice that going from the second to the third and from the third to the fourth line the metric changes. This happens because the real moment map is not preserved by the $\C^{\star}$-action so in both steps the metric changes in order to satisfy the appropriate real moment map. 
\end{remark}
In order to answer the question we follow the paradigm of \cite{CW} and define a \emph{Hodge slice} $S_p$ in the space $\mathbb M$ which via a Kuranishi map provides a local coordinate chart of $\mathfrak M_{(\zeta_{\R},0)}$ about the point $[p]$. If furthermore $[p] = [p_0] = [(B_h^0,i_k^0,B_{\bar h}^0,j_k^0)]$ is a fixed point of the $\C^{\star}$--action then the deformation theory allows us to define a natural subvariety $S^+_{p_0}\subset S_{p_0}$ which we call the Bia\l{}ynicki-Birula (BB) slice. If $W(p_0)$ is the set of points in $M^Q_{Hod}$ whose $\C^{\star}$--limit is the point $p_0$ then we show that the restrictions $W^0(p_0) = W(p_0)\cap\mathfrak M_{(\zeta_{\R},0)}$ and $W^1(p_0) = W(p_0)\cap\mathfrak M_{(0,-2i\zeta_{\R})}$ are holomorphic Lagrangian submanifolds of $M_{(\zeta_{\R},0)}$ and $M_{(0,-2i\zeta_{\R})}$ respectively and that the BB slice is biholomorphic to both of them. In fact we prove the stronger statement that for every $A\in S^+_{p_0}$ there exists a unique $g_A\in G_{\mathbf{v}}^{\C}$ such that $g_A\cdot(p_0+A)\in W^0(p_0)$. This will allow us to prove 
\begin{theorem}\label{main}
Given $[p_0]$ and $A\in S^+_{p_0}$ the conformal limit 
\begin{equation}
\begin{split}
&\mathcal {CL}_{\hbar}(g_A\cdot(p_0+A)):= \lim\limits_{R\to  0}F_{R,\hbar}(g_A\cdot(p_0+A)) =\\ &g_{A,\hbar}\cdot(B_h^0+A_h - \hbar(B_{\bar h}^0)^{\dagger}, i_k^0+ I_k - \hbar(j_k^0)^{\dagger}, \hbar^{-1}(B_{\bar h}^0 + A_{\bar h}) + (B_h^0)^{\dagger}, \hbar^{-1}(j_k^0+J_k)+(i_k^0)^{\dagger})
\end{split}
\end{equation}
is a biholomorphism from $W^0(p_0)$ to $W^1(p_0)$. 
\end{theorem}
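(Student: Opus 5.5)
We follow the strategy of \cite{CW}, adapted to the finite-dimensional setting of quiver varieties. The plan has three parts: prove the limit exists using the $\C^{\star}$-weights of the BB slice, show that the limit point lies in $W^1(p_0)$, and then establish the biholomorphism by producing a parallel BB-slice parametrisation of $W^1(p_0)$.

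\textbf{Weight decomposition and rescaling.} Since $[p_0]$ is a $\C^{\star}$-fixed point, there is a one-parameter subgroup $t\mapsto \psi_t\in G_{\mathbf v}$ with $\psi_t\cdot(B_h^0,i_k^0,tB_{\bar h}^0,tj_k^0)=p_0$. Under the induced action, $S_{p_0}$ splits into weight spaces, and $S^+_{p_0}$ consists of the positive-weight directions. For $A\in S^+_{p_0}$, apply $\psi_R$ to the rescaled point $(B_h,i_k,RB_{\bar h},Rj_k)$ attached to $g_A\cdot(p_0+A)$. The result has the form $p_0+O(R)$, and all $A$-dependence is multiplied by positive powers of $R$.

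We then write the harmonic metric $h_{R,\xi}$ appearing in $F_{R,\hbar}$, namely the solution of $\mu_{\R}=(1-|\xi|^2)\zeta_{\R}$ on the complexified orbit, in the gauge-transformed form $\psi_R^{*}h_0\cdot e^{s_R}$. The key analytic claim is that $s_R\to 0$ as $R\to0$. We argue as follows:
\begin{itemize}
\item $p_0$ solves the real moment map equation exactly.
\item The perturbation is $O(R)$, and $\xi=\hbar R\to 0$.
\item The linearisation of $\mu_{\R}$ along $i\mathfrak g_{\mathbf v}$ at $p_0$ is invertible, because the stabiliser is trivial under the genericity assumption.
\end{itemize}
The finite-dimensional implicit function theorem then gives a solution $s_R$ depending smoothly on $R$ near $0$, with $s_0=0$. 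This is much easier than in \cite{CW}, because no elliptic estimates are required. Uniqueness of the metric on a complex orbit (Kempf--Ness) identifies this solution with $h_{R,\xi}$.

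\textbf{Taking the limit.} We substitute into the formula for $F_{R,\hbar}$ in \eqref{snake2} and conjugate by $\psi_R$. For the terms of the form $\hbar^{-1}B_{\bar h}+B_h^{\dagger}$, the weights combine so that:
\begin{itemize}
\item the adjoints $\dagger_{h_{R,\xi}}$ converge to $\dagger_{h_0}$ applied to $p_0$;
\item the holomorphic parts $B_h,i_k$ and $\hbar^{-1}B_{\bar h},\hbar^{-1}j_k$ keep exactly the contributions $B_h^0+A_h$, $i_k^0+I_k$, $\hbar^{-1}(B_{\bar h}^0+A_{\bar h})$ and $\hbar^{-1}(j_k^0+J_k)$.
\end{itemize}
These contributions survive because the slice weights cancel the $\psi_R$ scaling precisely on the $A$-terms. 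Higher-order cross terms vanish in the limit. This yields the stated expression up to the residual complex gauge transformation $g_{A,\hbar}$, which is the limit of $\psi_R e^{s_R}g_A\psi_R^{-1}$. Its existence also follows from the implicit function argument, now applied at $\xi=0$ to the equation $\mu_{\R}=0$ at level $(0,-2i\zeta_{\R})$.

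The limit point satisfies $\mu_{\C}=-2i\zeta_{\R}$; this is checked by direct computation using $\mu_{\C}(p_0+A)=0$ and $\mu_{\R}(p_0)=\zeta_{\R}$. Its $\C^{\star}$-limit in $M^Q_{Hod}$ is $p_0$, since the $A$-terms have positive weight and flow back to zero. Hence the limit lies in $W^1(p_0)$.

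\textbf{Biholomorphism.} We show that the map
\begin{equation*}
A\mapsto \left[(B_h^0+A_h-\hbar(B_{\bar h}^0)^{\dagger},\ \dots)\right]
\end{equation*}
from $S^+_{p_0}$ to $W^1(p_0)$ is itself a BB-type slice parametrisation of $W^1(p_0)$. The map is holomorphic in $A$, since $A$ enters only holomorphically before taking the quotient. For bijectivity, we note that the point $p_0^{\hbar}:=(B_h^0-\hbar(B_{\bar h}^0)^{\dagger},\dots)$ satisfies the complex moment map at level $(0,-2i\zeta_{\R})$. The $\C^{\star}$-weight decomposition of the deformation complex at $p_0^{\hbar}$ matches the one at $p_0$. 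Running the same Kuranishi/BB argument used for $W^0(p_0)$ then gives that every point of $W^1(p_0)$ is uniquely of this form. Composing with the biholomorphism $S^+_{p_0}\cong W^0(p_0)$, $A\mapsto[g_A\cdot(p_0+A)]$, shows that $\mathcal{CL}_{\hbar}$ is a biholomorphism.

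\textbf{Main obstacle.} The hardest step is this final surjectivity and injectivity onto $W^1(p_0)$, which requires a global, not merely local, BB decomposition. I would first try to exploit $\C^{\star}$-equivariance: contract any point of $W^1(p_0)$ close to $p_0$ using the $\C^{\star}$-action on $M^Q_{Hod}$, apply the local slice theorem there, and then flow back. The slice must be compatible with this flow, which is guaranteed by its linear weight structure.
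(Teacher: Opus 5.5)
There is a genuine gap in the step where you take the limit. The overall architecture matches the paper's: rescale, renormalise by $\psi_R$ (the paper's $g_R$), solve for the metric near $p_0$, pass to the limit, and check $\mu_{\C}(p_A)=-2i\zeta_{\R}$.

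\textbf{The metric estimate is too weak.} Your implicit function argument gives only $s_R\to 0$. That controls $h_{A,R}=e^{s_R}$ in the frame where the rescaled point reads $p_0+A_R$, so that $H_{A,R}=\psi_R e^{s_R}\psi_R$ (the paper's $g_Rh_{A,R}g_R$). The limit, however, must be computed in the frame where the holomorphic entries are literally those of $p_0+A$ (the paper's $G_{A,R}^{-1}\cdot P_R$). There the non-holomorphic entries are, for example,
\[
H_{A,R,out(h)}^{-1}B_h^{\dagger}H_{A,R,in(h)}=\bigl(\psi_R^{-1}e^{-s_R}\psi_R\bigr)\bigl(\psi_R^{-2}B_h^{\dagger}\psi_R^{2}\bigr)\bigl(\psi_R^{-1}e^{s_R}\psi_R\bigr),
\]
and $X\mapsto\psi_R^{-1}X\psi_R$ multiplies $X\in\End(V_k)_m$ by $R^{-m}$. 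A weight-$m$ component of $s_R$ of size $R^{a}$ therefore contributes $R^{a-m}$. Knowing $s_R=o(1)$, or even $\mathcal O(R^2)$, tells you nothing for $m\ge 2$. So your sentence that the adjoints converge to $\dagger_{h_0}$ applied to $p_0$ is the heart of the existence claim, and it does not follow from what you prove.

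\textbf{What is missing.} You need the graded estimate of Lemma~\ref{metric}: the weight-$m$ part of $\log h_{A,R}$ is $\mathcal O(R^{|m|+2})$. The paper gets it from two facts your argument never uses.
\begin{itemize}
\item The part of $\mu_{\R}(p_0+A_R)-\zeta_{\R}$ that is linear in $A_R$ is a multiple of $l^{\star}_{p_0}(A_R)+l^{\star}_{p_0}(A_R)^{\dagger}$. This vanishes because $A_R\in S^+_{p_0}$, so the remaining quadratic error in weight $m$ is $\mathcal O(R^{|m|+2})$.
\item $\mathcal L(p_0,\cdot)$ preserves the weight filtration. The error can therefore be removed weight by weight, with a final inverse function theorem step at order $R^{M+2}$.
\end{itemize}
Weight counting then gives $F_R=g_{A,\hbar}\cdot p_A+\mathcal O(R^2)$. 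The fact that the slice condition $l^{\star}_{p_0}(A)=0$ plays no role in your argument is itself a warning sign. Without it, the weight-$m$ corrections are of size $R^{|m|}$ and shift the conjugated adjoint terms by $\mathcal O(1)$, which in general alters the limit.

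\textbf{Secondary issues in the limit step.}
\begin{itemize}
\item Conjugating by $\psi_R$, as you propose, makes the $A$-terms $\mathcal O(R)$. That is not the frame in which they survive.
\item $g_{A,\hbar}$ is not a limit of $\psi_Re^{s_R}g_A\psi_R^{-1}$. It is simply the gauge transformation with $\mu_{\R}(g_{A,\hbar}\cdot p_A)=0$, supplied by Lemma~\ref{PA} and genericity, followed by a last inverse function theorem step at $g_{A,\hbar}\cdot p_A$.
\item The metric in $F_{R,\hbar}$ must solve $\mu_{\R}=\zeta_{\R}$, not $(1-|\xi|^2)\zeta_{\R}$. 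Otherwise the twistor formula does not land in $\mu_{\C}^{-1}(-2i\zeta_{\R})$.
\end{itemize}

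\textbf{The biholomorphism.} The paper treats this part only briefly, deducing it from the explicit formula and Corollary~\ref{GG}. Your attempt to realise $A\mapsto[p_A]$ as a BB-type chart of $W^1(p_0)$ therefore goes further than the paper does. Still, your claim that the weight decomposition at $p_0^{\hbar}$ matches the one at $p_0$ is wrong.
\begin{itemize}
\item $p_0^{\hbar}$ has components in $\mathbb M^0_{\Omega}\oplus\mathbb M^1_{\Omega}$ and in $\mathbb M^{-1}_{\overline\Omega}\oplus\mathbb M^0_{\overline\Omega}$. Hence $d\mu_{\C}(p_0^{\hbar},\cdot)$ respects only the filtration, not the grading.
\item $\mathfrak M_{(0,-2i\zeta_{\R})}$ carries no $\C^{\star}$-action of its own.
\end{itemize}
The contraction you sketch has to be carried out in $\mathcal M_{\zeta_{\R}}$ near $p_0$. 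There, acting by $\hbar\in\C^{\star}$ turns $p_A$ into $p_0+A+\hbar\bigl(-(B^0_{\bar h})^{\dagger},-(j^0_k)^{\dagger},(B^0_h)^{\dagger},(i^0_k)^{\dagger}\bigr)$, with the correction lying in $\mathbb M^{(1)}_{\Omega}\oplus\mathbb M^{(0)}_{\overline\Omega}$. Injectivity and surjectivity still have to be proved in that setting.
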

\begin{remark}
The point 
\begin{equation}\label{explicit}
p_A:= (B_h^0+A_h - \hbar(B_{\bar h})^{\dagger}, i_k^0+ I_k - \hbar(j_k^0)^{\dagger}, \hbar^{-1}(B_{\bar h}^0 + A_{\bar h}) + (B_h^0)^{\dagger}, \hbar^{-1}(j_k^0+J_k)+(i_k^0)^{\dagger})
\end{equation}
satisfies the complex moment map $\mu_{\C}(p_A) =-2i\zeta_{\R}$ and $g_{A,\hbar}\in G_{\mathbf{v}}^{\C}$ is the unique complex gauge transformation for which the real moment map $\mu_{\R}(g_{A,\hbar}\cdot p_A)=0$ is satisfied. 
\end{remark}
\begin{remark}
By abuse of notation, we sometimes write $\mathcal {CL}_{\hbar}(p_0+A)$ instead of $\mathcal {CL}_{\hbar}(g_A\cdot(p_0+A))$ since $g_A$ is uniquely determined.
\end{remark}
Finally, we present an application. We formulate and prove the analogue of Simpson's foliation conjecture \cite{Simpson} for all Nakajima quiver varieties:
\begin{theorem}
$W^1(p_0)$ are complete submanifolds of $\mathfrak M_{(0,-2i\zeta_{\R})}$ for any fixed point $p_0$ assuming $\zeta_{\R}$ is generic. 
\end{theorem}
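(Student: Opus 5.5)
The plan is to follow the Higgs-bundle analogue of Simpson's conjecture as proved in rank two in \cite{DS}, now in the finite-dimensional setting. The key tool is the conformal limit $\mathcal{CL}_\hbar$ of Theorem~\ref{main}, which identifies $W^1(p_0)$ with the Bia\l{}ynicki-Birula slice $S^+_{p_0}$ through the explicit formula \eqref{explicit}.

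\emph{Step 1: closedness.} Completeness of $W^1(p_0)$ will follow once $W^1(p_0)$ is a closed subset of $\mathfrak M_{(0,-2i\zeta_{\R})}$. By the quasi-projectivity of quiver varieties, $\mathfrak M_{(0,-2i\zeta_\R)}$ is affine and is a smooth complete hyperk\"ahler manifold for generic $\zeta_\R$. Suppose $x_n=[p_{A_n}]\in W^1(p_0)$ and $x_n\to x\in\mathfrak M_{(0,-2i\zeta_\R)}$; we must show $x\in W^1(p_0)$.

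\emph{Step 2: control of $A_n$.} Since $x\in\mathfrak M_{(0,-2i\zeta_\R)}$, its $\C^\star$-orbit in $M^Q_{Hod}$ has a limit at $0$, namely some fixed point $q_0$. This uses properness of the $\C^\star$-action, i.e.\ that limits exist: for generic $\zeta_\R$ the map $\mathfrak M_{(\zeta_\R,0)}\to\mathfrak M_{(0,0)}$ is projective, and the same holds for the family $M^Q_{Hod}$ over $\C$. The fixed locus has finitely many components, and $W(p_0)$ ranges over the connected components through $p_0$.

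Next we use that $S^+_{p_0}$ is a linear space graded by positive $\C^\star$-weights, with $\mathcal{CL}_\hbar$ equivariant for the weighted $\C^\star$-action on $S^+_{p_0}$ and the rescaling $\hbar\mapsto t\hbar$ on the target. If $|A_n|\to\infty$, we rescale by $t_n\to0$ so that $t_n\cdot A_n$ stays bounded and nonzero. Convergence of $x_n$ together with equivariance then forces the limit point $\lim t_n\cdot x_n$ to lie in the downward flow of $p_0$ and also in the closure of the attracting set of $p_0$. Generic $\zeta_\R$ gives a Bia\l{}ynicki-Birula stratification without such broken flow lines meeting at the same fixed point, which yields a contradiction. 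Hence $A_n$ is bounded, so after passing to a subsequence $A_n\to A$.

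\emph{Step 3: identify the limit.} The real moment map equation defining $g_{A,\hbar}$ is solved uniquely, by a Kempf--Ness/convexity argument, continuously in $A$. Therefore $p_{A_n}\to p_A$ and $x=[g_{A,\hbar}p_A]\in W^1(p_0)$, which proves closedness.

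The main obstacle is Step 2. It requires a properness statement: the composition of $\mathcal{CL}_\hbar^{-1}$ with the inclusion $W^1(p_0)\hookrightarrow\mathfrak M_{(0,-2i\zeta_\R)}$ must be proper. I would first try to prove it via weight estimates on the explicit point $p_A$. Invariant functions, namely traces of cycles in the doubled framed quiver, give coordinates on the affine variety, and their values on $p_A$ are polynomials in $A$ whose top weight part equals the corresponding invariants of $p_0+A$ on $\mathfrak M_{(\zeta_\R,0)}$. The aim is to show that these top-weight parts do not all vanish on $A\neq0$ of unit weighted norm. This is where genericity of $\zeta_\R$ enters: it guarantees that no nonzero $A$ in the attracting cone becomes unstable or nilpotent.
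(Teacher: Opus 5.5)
Your Step 2 has a genuine gap, and both justifications you offer rest on the same false claim. Genericity of $\zeta_\R$ makes $\mathfrak M_{(\zeta_\R,0)}$ smooth with free $G_{\mathbf v}$-action. It does not rule out non-constant $\C^\star$-orbits joining two distinct fixed points, and it does not rule out nonzero $A\in S^+_{p_0}$ with $p_0+A$ nilpotent. These are the same phenomenon. Every invariant has positive weight, so a point of $W^0(p_0)$ lies in the nilpotent cone $\pi^{-1}(0)$ of $\pi:\mathfrak M_{(\zeta_\R,0)}\to\mathfrak M_{(0,0)}$ exactly when its $t\to\infty$ limit also exists, i.e.\ when it sits on a broken flow line.

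This already happens for the minimal resolution of $\C^2/\Z_3$, realized as the cyclic quiver variety with $\mathbf v=(1,1,1)$, $\mathbf w=(1,0,0)$ and an acyclic orientation. The induced action on $\C^2/\Z_3$ has weights $1/3$ and $2/3$. One exceptional curve is pointwise fixed. The other is the closure of a single orbit running from the point $p_0$ where the two curves meet to an isolated fixed point $q$. Hence the whole leaf $W^0(p_0)\cong\C$ lies in the exceptional divisor. In such directions the top-weight part of every invariant, namely $f(p_0+A)$ up to a power of $\hbar^{-1}$, vanishes. Your leading-term estimate then gives no growth at all, and the ``no broken flow lines'' contradiction in your rescaling argument is unavailable.

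Your leading-term computation is exactly the paper's first case: $p_0+A$ not nilpotent, where $\Tr M_{P_{A,\hbar}}(\gamma)$ has leading term $\hbar^{-M}\Tr M_{p_0+A}(\gamma)$. What you are missing is the paper's separate treatment of the nilpotent case. There, in the gauge $B+A_{\hbar^{-1}}$, every Le Bruyn--Procesi/Lusztig invariant of $\mathcal{CL}_\hbar(p_0+A_R)$ is a polynomial in the single variable $R/\hbar$. Injectivity of $\mathcal{CL}_\hbar$ (Theorem~\ref{main}) forces at least one of these polynomials to be non-constant, hence unbounded. The growth comes from the lower-weight terms built out of $(B^0_h)^\dagger$, $(B^0_{\bar h})^\dagger$, $(i^0_k)^\dagger$, $(j^0_k)^\dagger$, which a top-weight analysis discards.

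Note that this gives escape from compact sets along each ray, which is what the paper proves. The closedness you set up in Steps 1--3 is stronger. It would also require controlling sequences $A_n$ whose weighted directions vary, and neither argument supplies that uniformity in the nilpotent directions.
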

Unlike the proof strategy idea developed in \cite{Schulz,DS}, we do not try to obtain explicit growth estimates along rays in $W^1(p_0)$. Our argument combines the results of Le Bruyn and Procesi \cite{LP1990} on finite generation by traces of words of the ring of invariants of quiver varieties with the explicit parametrization \eqref{explicit} and the fact that the conformal limit map is a biholomorphism from $W^0(p_0)$ to $W^1(p_0)$, reducing the problem to the simple fact that non-constant polynomials become unbounded. Despite its simplicity, we believe that this argument provides a conceptual shift in how to approach similar questions in hyperk\"ahler geometry. 
\\
\paragraph{\textbf{Aknowledgements}} The authors would like to thank Georgios Kydonakis for many fruitful discussions and for carefully proofreading this manuscript. The second author would like to thank Georgios Kydonakis for providing the opportunity for an extended visit at the University of Patras during which a significant portion of this work was completed. 

\section{Preliminaries}

In the first part of this section we introduce Nakajima quiver varieties and present some of their basic properties, necessary for the later parts of the paper \cite{Na1994}. In the second part of the section we explain the core ideas behind the proof that the conformal limit for Higgs bundles exists \cite{CW} that we will generalize to the quiver variety setting.  

\subsection{The geometry of Nakajima quiver varieties}

Let $\Gamma$ be a finite, non-empty graph with $n$ vertices and let $H$ be the set of pairs consisting of an edge together with an orientation. From here on, we fix an orientation of the graph $\Gamma$. By this we mean a subset $\Omega \subset H $ such that $\bar \Omega \cup \Omega =H$,  $\Omega\cap\bar\Omega=\emptyset$. For $h \in H$, we denote $\text{in}(h)$ the incoming vertex and $\text{out}(h)$ the outgoing vertex. We also denote $\hb$ the edge with its orientation reversed, meaning that $\text{in}(\hb)=\text{out}(h)$ and $\text{out}(\hb)=\text{in}(h)$. Two different vertices may be joined by several edges, but no edge may join a vertex to itself (no loops are allowed). We number the vertices and identify the set of vertices with the set ${1,2,...,n}$. Let $(V_k, W_k)$ be pairs of hermitian vector spaces for every vertex $k \in {1,2,...n}$. 

\begin{definition}

We define the \textbf{Nakajima quiver representation space}:
\begin{equation}\label{NQRS}
   \mathbf M=\mathbf M(\mathbf{v},\mathbf{w}):=\left(\bigoplus_{h\in H} \Hom(V_{\text{out}(h)},V_{\text{in}(h)})\right)\oplus\left(\bigoplus_{k=1}^n \Hom(W_k,V_k)\oplus \Hom(V_k,W_k)\right),
\end{equation}
where $ \mathbf{v}= (\dim_\C V_1,...,\dim_\C V_n)$ and $\mathbf{w}=(\dim_\C W_1,...,\dim_\C W_n)$ denote the dimensions of the hermitian vector spaces.
We denote the elements of $\mathbf M$ by triples $(B,i,j)$ where $B_h \in \Hom(V_{\text{out}(h)},V_{\text{in}(h)})$ with $B=(B_h)_{h \in H}$, $i_k \in \Hom(W_k,V_k)$ with $i=(i_k)_{1\le k\le n}$ and $j_k \in \Hom(V_k,W_k)$ with $j=(j_k)_{1\le k\le n}$. 
\end{definition}
The space $\mathbf{M}$ carries a natural symplectic form $\omega_\C$ given by:
\begin{equation}\label{CSF}
    \omega_\C((B,i,j),(B',i',j')):=\sum\limits_{h \in H} \Tr (\eps(h)B_h B_{\hb})+\sum\limits_{k=1}^n \Tr(i_k j'_k-i'_kj_k),
\end{equation}
where $\eps(h)=1$ if $h \in \Omega$ and $\eps(h)=-1$ if $h \in \bar \Omega$.

The symplectic vector space $\mathbf{M}$ decomposes into the sum of Lagrangian subspaces $\mathbf{M}=\mathbf{M_\Omega}\oplus \mathbf{M_{\bar \Omega}}$ where:
\begin{equation*}
    \begin{split}
        \mathbf{M_\Omega}:=&\bigoplus _{h \in \Omega}\Hom (V_{\text{out}(h)},V_{\text{in}(h)})\oplus \bigoplus_ {k=1}^n \Hom(W_k,V_k) \\
        \mathbf{M_{\bar \Omega}}:=&\bigoplus _{h \in \bar \Omega}\Hom (V_{\text{out}(h)},V_{\text{in}(h)})\oplus \bigoplus _{k=1}^n \Hom(V_k,W_k).
    \end{split}
\end{equation*}
Since $V,W$ are hermitian vector spaces there exists a hermitian inner product on $\Hom(V,W)$ defined by $(f,g)=\Tr(fg^\d)$, where $(\bullet)^\d$ denotes the hermitian adjoint. Using this inner product we define a second complex structure $J$ on $\mathbf{M}$ given by:

\begin{equation*}
J(m,m')=({-m'}^\d,m^\d) \text{ for } (m,m')\in\mathbf{M_\Omega}\oplus \mathbf{M_{\bar \Omega}}.
\end{equation*}
Combining the symplectic form $\omega_{\C}$ and the complex structure $J$ we define the metric 
\begin{equation}\label{Qmetric}
g((B,i,j),(B',i',j')) := \omega_{\C}((B,i,j),J\cdot(B',i',j')) = \sum\limits_{h\in H} \Tr(B_hB_h^{'\dagger}) + \sum\limits_{k=1}^n \Tr(i_k i_k^{\dagger} + j_k^{'\dagger}j_k).
\end{equation}

The two complex structures define a hyperk\"ahler structure on $\mathbf{M}$. The gauge group $G_{\mathbf{v}}=\prod_{k=1}^{n} \mathrm{U}(\dim V_k)$ acts on $\mathbf{M}$ by:
\begin{equation*}
   g \cdot  (B_h,i_k,j_k)= (g_{\text{in}(h)}B_h g_{\text{out}(h)}^{-1},g_ki_k,j_kg_k^{-1}),
\end{equation*}
preserving the hyperk\"ahler structure and the hermitian inner product on $\mathbf{M}$.
Let $\mu $ be the corresponding hyperk\"ahler moment map and $\mu_\R,\mu_\C$ the real and complex components of $\mu$. They have the following explicit forms:
\begin{equation}
\begin{split}
    \mu_\R (B,i,j)&=\frac{i}{2}\left(\sum \limits_{\substack{\{ h \in H\\ k=in(h)\}}} B_hB_h^\d-B_{\hb}^\d B_{\hb}+i_ki_k^\d -j_k^\d j_k\right)_k \in \bigoplus _k \mathfrak{u}(V_k)=\mathfrak{g}_{\mathbf{v}} \\
    \mu_\C(B,i,j)&=\left(\sum \limits_{\substack{\{ h \in H\\ k=in(h)\}}} \eps(h)B_hB_{\hb}+i_kj_k \right)_k \in \bigoplus _k \mathfrak{gl}(V_k)=\mathfrak{g}_{\mathbf{v}}\otimes \C,
\end{split}
\end{equation}
where $\mathfrak{g}_{\mathbf{v}}$ is the Lie algebra of $G_{\mathbf{v}}$. Above we have identified $\mathfrak{g}$ with its dual $\mathfrak{g^*}$.
\begin{definition}
    Let $Z\subset \mathfrak{g}_{\mathbf{v}}$ be the center of the Lie algebra $\mathfrak{g}_{\mathbf{v}}$ and let $\zeta =(\zeta_\R,\zeta_\C)$ be an element of $Z\oplus (Z\otimes \C)$. We define the Nakajima quiver variety as the hyperk\"ahler quotient $\mathfrak{M}_\zeta$ of $\bold M$ by $G_{\mathbf{v}}$:
    \begin{equation}
        \mathfrak{M}_\zeta =\mathfrak{M}_{\zeta}(\mathbf v, \mathbf w):= \{(B,i,j) \in \mathbf M | \mu (B,i,j)=\zeta\}/G_{\mathbf{v}}.
    \end{equation}
\end{definition}
We denote by $[(B,i,j)]$ the $G_{\mathbf{v}}$-orbit of $(B,i,j)$ considered as a point on $\mathfrak{M}_\zeta$.

Depending on the choice of $\zeta$ the space $\mathfrak{M}_{\zeta}$ may have singularities. The subset
\begin{equation}
\mathfrak{M}_{\zeta}^{reg}=\{(B,i,j) \in \mu^{-1}(-\zeta)|\text{ the stabilizer of }(B,i,j) \text{ in } G_{\mathbf{v}} \text{ is trivial}\}/G_{\mathbf{v}}
\end{equation}
is a generally incomplete nonsingular hyperk\"aler manifold with dimension: 
\begin{equation*}
\dim_\R \mathfrak{M_\zeta ^{reg}}=\dim_\R M-\dim_\R G_v=2^t \mathbf{v}(2\mathbf{w}-\mathbf{Cv}), 
\end{equation*}
where $C=2I-A$ is the generalized Cartan matrix. The natural symplectic form $\omega_{\C}$ descends to define a symplectic form on $\mathfrak{M}_\zeta ^{reg}$ which we also denote by $\omega_\C$. Now, let: 
\begin{equation*}
    \begin{split}
        R_+ &=\{\theta =(\theta_k) \in \Z_{\ge 0}^n | ^t \theta\mathbf{C}\theta \le2\}\backslash \{0\} \\
        R_+(\mathbf{v})&= \{\theta \in R_+|\theta _k \le \dim_\C V_k \text{ for every } k\} \\
        D_{\theta}&=\{x=(x_k) \in \R^n|\sum \limits_k x_k \theta _k =0\}.
    \end{split}
\end{equation*}
\begin{theorem}[Theorem $2.8$ in \cite{Na1994}]\label{generic}
    Suppose 
    \begin{equation}\label{generic}
        \zeta \in \R^3\oplus\R^n \backslash\bigcup \limits_ {\theta \in R_+(\mathbf{v})}\R^3\oplus D_\theta
    \end{equation}  
    then the regular locus $\mathfrak{M}_\zeta ^{reg}$ coincides with $\mathfrak{M}_\zeta$ thus making $\mathfrak{M}_{\zeta}$ a complete hyperk\"ahler manifold.
\end{theorem}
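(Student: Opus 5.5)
The strategy is to show that if $\zeta$ avoids all the walls $\R^3\oplus D_\theta$, then every point of $\mu^{-1}(\zeta)$ has trivial stabilizer in $G_{\mathbf{v}}$. Once that holds, the quotient is smooth by the standard hyperk\"ahler quotient argument (Hitchin--Karlhede--Lindstr\"om--Ro\v{c}ek), and completeness follows because $\mu^{-1}(\zeta)$ is a closed submanifold of the complete flat space $\mathbf M$ and $G_{\mathbf{v}}$ is compact.

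The first step is to enlarge the quiver to get rid of the framing. Following Crawley-Boevey, I would add a vertex $\infty$ with $\dim V_\infty=1$ and $w_k$ edges from $\infty$ to $k$. This turns $(B,i,j)$ into an unframed representation of a doubled quiver. The group $G_{\mathbf{v}}$ then becomes $\prod U(V_k)\times U(1)/U(1)_{\text{diag}}$, and $\zeta$ extends by setting $\zeta_\infty=-\sum_k\zeta_k v_k$.

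The second step decomposes a representation with nontrivial stabilizer. Suppose $x\in\mu^{-1}(\zeta)$ is fixed by some $g\neq 1$. Since $g$ is unitary, it is diagonalizable, and its eigenspaces split every $V_k$ orthogonally into sub-representations that are preserved by all the $B_h,i_k,j_k$. Because $g\neq 1$, at least one summand $V'=(V'_k)$ is nonzero and has zero component at $\infty$, meaning the $i,j$ components vanish on it. Restrict the moment map equations to $V'$ and take the trace at each vertex. In the real equation, the terms $\Tr(B_hB_h^\dagger)-\Tr(B_{\bar h}^\dagger B_{\bar h})$ cancel in pairs when summed over all vertices. In the complex equation, $\sum\eps(h)\Tr(B_hB_{\bar h})$ cancels in the same way. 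This leaves $\sum_k\zeta_k^{(\alpha)}\dim V'_k=0$ for each of the three components $\alpha$, so $\theta'=\dim V'$ satisfies $\sum_k\zeta_k\theta'_k=0$ simultaneously for the real and the complex parameters.

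The third step is the root-theoretic reduction, and I expect it to be the main obstacle. Here $\theta'$ satisfies $0\le\theta'_k\le v_k$, but it need not lie in $R_+$. I would decompose $V'$ further, as a representation of the unframed doubled quiver satisfying the moment map, into orthogonal indecomposable summands. For each summand the trace argument again gives the vanishing of $\zeta\cdot\dim$. Then I would invoke Kac's theorem, or Crawley-Boevey's criterion for the existence of solutions of $\mu=\zeta$: the dimension vector of any indecomposable summand that admits a solution of the moment map is a positive root. Real roots satisfy ${}^t\theta C\theta=2$ and imaginary roots satisfy ${}^t\theta C\theta\le 0$, so the summand's dimension vector lies in $R_+(\mathbf{v})$. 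Hence $\zeta$ would lie on the wall $\R^3\oplus D_\theta$, which the hypothesis excludes.

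With trivial stabilizers established, $d\mu$ is surjective at each point, so $\mu^{-1}(\zeta)$ is smooth and the free action of the compact group $G_{\mathbf{v}}$ gives a smooth hyperk\"ahler quotient. Its metric is complete because it is the quotient of a complete manifold by isometries, and the dimension count is immediate. This shows $\mathfrak M_\zeta^{reg}=\mathfrak M_\zeta$.
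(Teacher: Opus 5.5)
\emph{Note:} the paper gives no proof of this statement and only cites Nakajima. The comparison below is with the standard argument for that cited result, reconstructed here, not with anything in the paper.

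Your Step 2 matches that argument: split by eigenspaces of a nontrivial stabilizing element, pass to a piece $V'$ on which $i,j$ vanish, and use the trace identity to get $\zeta\in\R^3\otimes D_{\dim V'}$. The gap is in Step 3. You correctly single it out as the crux, but then settle it only by citation, and the cited results do not give what you claim.

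\textbf{Kac's theorem does not apply.} It concerns indecomposable representations of $(\Gamma,\Omega)$. An orthogonally indecomposable solution of the doubled-quiver equations need not restrict to an indecomposable $\Omega$-representation. For example, take $\Gamma=A_2$, $\Omega=\{h\colon 1\to 2\}$, $\mathbf w=0$, $B_h=0$ and $B_{\bar h}=a\neq 0$. Then $\mu_\C=0$, $\mu_\R=\tfrac{i}{2}|a|^2(1,-1)$, and the stabilizer is the diagonal $U(1)$. Yet $B_\Omega=0$ is the direct sum of the two simple representations.

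\textbf{Crawley-Boevey's criterion is too weak here.} His criterion for $\mu_\C^{-1}(\lambda)\neq\emptyset$ only writes a dimension vector as a sum of positive roots $\beta$ with $\lambda\cdot\beta=0$, for a single complex parameter $\lambda$. It does not say that a summand's own dimension vector is a root, and it does not control the real component of $\zeta$.

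Your assertion is in fact true: such a summand is a $\zeta_\R$-stable module over the deformed preprojective algebra, and those have root dimension vectors. But proving it needs Crawley-Boevey's theorem on simple modules together with a passage from stable to simple. None of this appears in your proposal.

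\textbf{The fix needs no root theory.} In the statement, $R_+$ is defined by ${}^t\theta C\theta\le 2$, not as the set of positive roots, and a dimension count suffices.
\begin{itemize}
\item Let $V''$ be an orthogonally indecomposable summand of $V'$, with dimension vector $\theta\le\mathbf v$.
\item Any non-scalar element of its stabilizer in $G_\theta=\prod_k U(\theta_k)$ would split $V''$ further by eigenspaces. So the stabilizer is exactly the scalar $U(1)$, which acts trivially because $\mathbf w=0$.
\item Hence $G_\theta/U(1)$ acts freely near this point, and $d\mu$ maps onto the annihilator of the scalars in $\mathfrak g_\theta\otimes\R^3$.
\item So near the point, the quotient of $\mu^{-1}(\zeta)$ by $G_\theta/U(1)$ is a nonempty manifold of real dimension
\begin{equation*}
4\sum_{h\in\Omega}\theta_{\text{out}(h)}\theta_{\text{in}(h)}-4\Big(\sum_k\theta_k^2-1\Big)=4-2\,{}^t\theta C\theta\ \ge 0 .
\end{equation*}
\end{itemize}
Therefore $\theta\in R_+(\mathbf v)$. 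Combined with $\zeta\in\R^3\otimes D_\theta$ from your trace argument, this contradicts genericity. This dimension count is presumably why $R_+$ is defined by that inequality in the first place.

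If you prefer to use Crawley-Boevey, first apply a generic hyperk\"ahler rotation. Choose it so that the new complex parameter $\lambda'$ satisfies $\lambda'\cdot\beta=0$, for $0<\beta\le\mathbf v$, only when $\zeta\in\R^3\otimes D_\beta$. This is possible because each such $\beta$ excludes only two complex structures. His criterion applied to $V'$ then yields positive roots $\beta\le\mathbf v$ with $\zeta\in\R^3\otimes D_\beta$.

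The framing trick in Step 1 is unnecessary but harmless. Your concluding smoothness and completeness arguments are fine.
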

\begin{definition}
    We call $\zeta$ \textbf{generic} if it satisfies condition \eqref{generic}.
\end{definition}
In order to define the Hodge and the Bia\l{}ynicki-Birula slice below we need to also define the Nakajima quiver variety as a GIT quotient by the complexified gauge group $G_{\mathbf{v}}^{\C}$. 
\begin{definition}
    Fix $\zeta=(\zeta_\R,\zeta_\C)$. The set of stable points is defined as:
    \begin{equation*}
        H_\zeta ^s=\{m \in \mu_\C^{-1}(\zeta_\C)~|\text{ the } G_{\mathbf{v}}^\C \text {-orbit through } m \text{ intersects the  set } \mu_\R^{-1}(\zeta_\R)\}.
    \end{equation*}
\end{definition}
\begin{theorem}[Theorem $3.2$ in \cite{Na1994}]
    Let $\zeta =(\zeta_\R,\zeta_\C)$ generic, then: \\
    i) we have a homeomorphism 
    \begin{equation*}
        \mathfrak{M}_\zeta \cong H_\zeta ^s/G_v^\C,
    \end{equation*}
    ii) $\mu _\C ^{-1}(-\zeta_\C)\backslash H_\zeta^s $ is a complex subvariety of $\mu_\C^{-1}(-\zeta_\C).$
\end{theorem}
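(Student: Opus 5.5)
The plan is to follow Nakajima's Kempf--Ness type argument, adapted to the hyperk\"ahler moment map. For part (i), the first step is to show that the natural map
\begin{equation*}
\mu^{-1}(\zeta_\R,\zeta_\C)/G_{\mathbf v}\to H^s_\zeta/G^\C_{\mathbf v}
\end{equation*}
is a bijection. Surjectivity is the definition of $H^s_\zeta$, once we note that $G^\C_{\mathbf v}$ preserves $\mu_\C$ because $\zeta_\C$ is central. Injectivity reduces to the following claim: if $m$ and $g\cdot m$ both lie in $\mu_\R^{-1}(\zeta_\R)$ with $g\in G^\C_{\mathbf v}$, then $g\in G_{\mathbf v}\cdot \mathrm{Stab}_{G^\C}(m)$. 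To prove it, write $g=u\exp(\sqrt{-1}\xi)$ via the polar decomposition, with $\xi\in\mathfrak g_{\mathbf v}$. Consider the function
\begin{equation*}
f(t)=\tfrac12\|\exp(\sqrt{-1}t\xi)m\|^2-\langle\zeta_\R,t\xi\rangle .
\end{equation*}
Its derivative is $\langle\mu_\R(\exp(\sqrt{-1}t\xi)m)-\zeta_\R,\xi\rangle$ up to sign conventions, and it is convex. Since $f'$ vanishes at $t=0$ and at $t=1$, convexity forces $f''\equiv0$ on $[0,1]$. That means $\xi$ acts trivially on $m$, hence $\exp(\sqrt{-1}\xi)$ stabilizes $m$, which proves the claim. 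Genericity, through Theorem~\ref{generic}, gives trivial stabilizers in $G_{\mathbf v}$, and then also in $G^\C_{\mathbf v}$ for points of $H^s$: the stabilizer of a point in $\mu_\R^{-1}(\zeta_\R)$ is the complexification of its compact stabilizer, by the same convexity argument.

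The second step is to show this bijection is a homeomorphism. Continuity of the map is clear. For the inverse, I will show that the retraction $H^s_\zeta\to\mu^{-1}(\zeta)$ sending $m$ to the point of $\mu_\R^{-1}(\zeta_\R)$ in its orbit is continuous modulo $G_{\mathbf v}$. I will obtain this from properness of the $G^\C$ action on $H^s$, which uses the triviality of stabilizers and the proper convex function $f$. Alternatively, continuity follows from the implicit function theorem: at a point of $\mu_\R^{-1}(\zeta_\R)$ with trivial stabilizer, the map $\xi\mapsto\mu_\R(\exp(\sqrt{-1}\xi)m)$ has invertible derivative $\xi\mapsto$ (positive operator $d\mu_\R\circ I\circ$ infinitesimal action). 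Combining this with the uniqueness above gives local continuity.

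For part (ii), I will use King's/Mumford's criterion and identify $H^s_\zeta$ with a $\chi$-stable locus. Let $\chi$ be the character determined by $\zeta_\R$. The convexity argument shows that $m\in H^s_\zeta$ if and only if $f$ is proper and bounded below along every one-parameter subgroup, i.e.\ for every $\lambda$ with $\lim_{t\to0}\lambda(t)m$ existing one has $\langle\zeta_\R,\lambda\rangle$ of the appropriate sign. By the Hilbert--Mumford criterion this is exactly $\chi$-semistability, which here coincides with stability by genericity. The unstable locus is then the common zero set of the $\chi^N$-semi-invariant polynomials on the affine variety $\mu_\C^{-1}(\zeta_\C)$, hence a complex subvariety.

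I expect the main obstacle to be this last identification. It requires showing that unboundedness of $f$ is detected along algebraic one-parameter subgroups. The standard remedy is Kempf's theorem on optimal destabilizing subgroups, or King's argument for quivers, applied to the closed $G^\C$-orbit in the closure, and I would invoke that rather than reprove it.
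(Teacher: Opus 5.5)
\textbf{Verdict.} The paper gives no proof of this statement; it only quotes Nakajima's 1994 paper, so your proposal has to stand on its own. Part (i) is sound, but part (ii) has a genuine gap: it needs a character that does not exist for irrational $\zeta_\R$.

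\textbf{Part (i).} Convexity of $t\mapsto\tfrac12\|\exp(\sqrt{-1}t\xi)m\|^2-\langle\zeta_\R,t\xi\rangle$ gives injectivity. It also shows that $G^\C_{\mathbf v}$-stabilizers of points of $\mu^{-1}(\zeta)$ are complexifications of the compact ones, which are trivial by Nakajima's Theorem 2.8. The implicit function argument for the inverse is correct, because the $\xi$-derivative is the positive operator $l_p^\star l_p$. This is the same identity $\langle\mathcal L\xi,\xi\rangle=\|l_p(\xi)\|^2$ that the paper uses in its technical-lemma subsection. As a by-product, $H^s_\zeta$ is open in $\mu_\C^{-1}(\zeta_\C)$.

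\textbf{The gap in (ii).} It sits at the first step: ``let $\chi$ be the character determined by $\zeta_\R$''.
\begin{itemize}
\item Characters of $G^\C_{\mathbf v}=\prod_k\GL(V_k)$ are $\prod_k\det^{a_k}$ with $a_k\in\Z$, so $\chi$ exists only for integral $\zeta_\R$.
\item Rescaling $m\mapsto cm$ multiplies both moment maps by $c^2$, which extends this to rational $\zeta_\R$.
\item Genericity, however, is an open condition on a real parameter and does not make $\zeta_\R$ rational. For example, for the $A_2$ quiver $\zeta_\R=(1,\sqrt2)$ is generic.
\item For irrational $\zeta_\R$ there is no linearization, hence no Hilbert--Mumford criterion, no King theorem, and no description of the unstable locus as the common zeros of $\chi^N$-semi-invariants. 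The argument never starts.
\end{itemize}

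\textbf{How to close it.} You have to work with $\zeta_\R$ itself.
\begin{itemize}
\item From the asymptotic slopes of your convex function along $\exp(\sqrt{-1}t\xi)$, for arbitrary $\xi\in\mathfrak g_{\mathbf v}$, prove a numerical criterion valid for real weights.
\item For quivers it reads: $m\notin H^s_\zeta$ if and only if some $B$-invariant graded subspace $S\subset V$, with either $0\neq S\subset\ker j$ or $\mathrm{im}\, i\subset S\neq V$, violates King's semistability inequality for $\zeta_\R$.
\item The hard direction of this criterion is exactly the Kempf/King step you intended to cite. It must be carried out, or quoted from a source that allows real parameters.
\item After that, closedness needs no semi-invariants. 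For each of the finitely many $s\le\mathbf v$, the set of such $m$ is the projection to $\mu_\C^{-1}(\zeta_\C)$ of a Zariski-closed incidence subvariety of $\mu_\C^{-1}(\zeta_\C)\times\prod_k\mathrm{Gr}(s_k,V_k)$. That projection is closed because Grassmannians are complete.
\item Alternatively, since the criterion only involves finitely many signs, you may then move $\zeta_\R$ to a rational point with the same signs and run your GIT argument.
\end{itemize}

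\textbf{A second, smaller point.} King's theorem identifies $H^s_\zeta$ with the $\chi$-\emph{polystable} locus, not the semistable one. So ``semistability coincides with stability by genericity'' needs an argument. This matters especially when the genericity is carried by $\zeta_\C$, for instance $\zeta_\R=0$, which is the space $\mathfrak M_{(0,-2i\zeta_\R)}$ used throughout the paper and where $\chi$ is trivial. The argument follows from your part (i):
\begin{itemize}
\item Suppose a semistable orbit in $\mu_\C^{-1}(\zeta_\C)$ were not closed in the semistable locus.
\item Its closure would then contain a polystable orbit of strictly smaller dimension.
\item That orbit meets $\mu^{-1}(\zeta)$, so it has trivial stabilizer, i.e.\ dimension $\dim G^\C_{\mathbf v}\ge\dim G^\C_{\mathbf v}\cdot m$. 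This contradicts the strict inequality.
\end{itemize}
For $\zeta_\R=0$ this says every orbit in $\mu_\C^{-1}(\zeta_\C)$ is closed, so the set in (ii) is empty.
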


\subsection{The conformal limit for Higgs bundles}
In order to discuss the conformal limit we need to introduce the notions of Higgs bundle and $\lambda$-connection. For this we need to fix a compact Riemann surface $\Sigma$ of genus $g\ge 2$ equipped with a complex vector bundle $E$ of rank $n$ and degree zero.
\begin{definition}\label{stabHB}
    A \textbf{Higgs bundle} is a pair $(\bar\pa_E, \Phi)$ consisting of a Dolbeault operator $\bar\pa_E$ which induces a holomorphic structure on $E$ and a $1$-form $\Phi\in H^0(\Sigma,\End( E)\otimes K)$ called the \emph{Higgs field} satisfying the compatibility condition 
    \begin{equation*}
    \bar\pa_E\Phi =0
    \end{equation*}
    i.e., the Higgs field is required to be holomorphic. Here $K$ denotes the canonical line bundle over $\Sigma$. 
\end{definition}
\begin{definition}
    A Higgs bundle $(\bar\pa_E,\Phi)$ is
    \begin{itemize}
    \item \textbf{stable} if for every non-zero proper $\Phi$-invariant holomorphic subbundle $F\subset E$, we have:
    \begin{equation}
    \frac{\deg(F)}{\rk (F)} < \frac{\deg(E)}{\rk(E)}. 
    \end{equation}
    \item \textbf{poly-stable} if it is a direct sum of stable Higgs bundles.
    \end{itemize}
\end{definition}
Let $\mathcal H^{ps}$ denote the set of poly-stable Higgs bundles. The complex gauge group $\mathcal{G}^\C=\Gamma(\Sigma,\GL(E))$ acts naturally on $\mathcal H^{ps}$:
\begin{equation*}
    g\cdot (\bar \del _E,\Phi):=(g^{-1}\circ\bar \del_E \circ g, g^{-1}\circ \Phi \circ g).
\end{equation*}
We define the moduli space of Higgs bundles as equivalence classes of polystable Higgs bundles under the gauge group action $M_H = \mathcal H^{ps}/\mathcal G^{\C}$. 

The moduli of Higgs bundles $M_H$ fits into a larger family of objects known as $\lambda$-connections which we now define. 

\begin{definition}
Given $\lambda \in \C$, a \textit{$\lambda$-connection} on $E$ is a triple $(\lambda, \delbar_E , \nabla_\lambda)$ where $\delbar_E$ is a Dolbeault operator on $E$ and $\nabla_\lambda : \Omega^0 (E) \rightarrow \Omega^{1,0}(E)$ is a differential operator, subject to:
\begin{enumerate}[label = (\roman*)]
\item a $\lambda$-twisted Leibniz rule: 
    
for all $f \in C^\infty (\Sigma)$ and $s \in \Omega^0 (E)$:
\begin{equation*}
        \nabla_\lambda (fs) = \lambda \partial f \otimes s + f \nabla_\lambda(s),
\end{equation*} 
    \item a holomorphicity condition $[ \delbar_E , \nabla_\lambda ] = 0$.
\end{enumerate}
\end{definition}
In order to obtain a geometrically well behaved moduli space we restrict ourselves to $\lambda$-connections satisfying a stability condition similar to definition \ref{stabHB}:
\begin{definition}
A $\lambda$-connection $(\lambda, \bar\pa_E, \nabla_{\lambda})$ is 
\begin{itemize}
    \item \textit{stable} if for any $\nabla_{\lambda}$-invariant subbundle $\mathcal F\subset \mathcal E$ it holds that $\deg \mathcal F < 0$,
    \item \textit{polystable} if it is a direct sum of stable $\lambda$-connections.
\end{itemize}
\end{definition}
We denote the space of polystable $\lambda$-connections by $\mathcal M^{ps}$. It admits an action by the gauge group $\mathcal G^{\C}$ given by 
\begin{equation*}
    g\cdot (\lambda,\bar \del _E,\nabla_{\lambda}):=(\lambda,g^{-1}\circ\bar \del_E \circ g, g^{-1}\circ \nabla_{\lambda} \circ g).
\end{equation*}
\begin{definition}
We define the \textbf{Hodge moduli space} $M_{Hod}:= \mathcal M^{ps}/\mathcal G_{\mathbb C}$. 
\end{definition}
The Hodge moduli space admits a natural $\mathbb C^{\star}$-action  
\begin{equation}\label{CSA}
    \xi\cdot [(\lambda,\bar\pa_E,\nabla_{\lambda})]:= [(\xi\lambda,\bar\pa_E,\xi\nabla_{\lambda})],
\end{equation}
as well as a $\C ^\star$-equivariant holomorphic map to $\C$ given by 
\begin{equation*}
  \pi: [(\lambda, \bar\pa_E,\nabla_{\lambda})]\to \lambda .  
\end{equation*}

\begin{remark} When $\lambda = 0$, $\Phi := \nabla_0$ becomes a holomorphic $C^\infty$-linear endomorphism of $(E, \delbar_E)$ with values in $(1,0)$-forms. As we have seen above these are exactly Higgs bundles. When $\lambda = 1$, $D:= \bar\pa_E + \nabla_1$ is a complex flat connection and their moduli space, called the \textit{de Rham moduli space} is denoted as $M_{dR}$. When $\lambda \neq 0$, then $\lambda^{-1}\cdot[(\lambda,\bar\pa_E,\nabla_{\lambda})] \in M_{dR}$. In particular, $\pi ^{-1} (\lambda) \simeq M_{dR}$.
\end{remark}

Note that $M_H^s:= \pi^{-1}(0)\cap\mathcal M^s/\mathcal G_{\mathbb C}$ and $M_{dR}^s := \pi^{-1}(1)\cap\mathcal M^s/\mathcal G_{\mathbb C}$ are holomorphic symplectic manifolds and that they are diffeomorphic through the Nonabelian Hodge correspondence:

\begin{theorem}[The non-abelian Hodge correspondence]
There exists a bijective correspondence between the moduli space $\mathcal M^{ps}_H$ of polystable Higgs bundles and the space of equivalence classes of completely reducible complex flat connections on $E$
\begin{equation*}
    \mathcal M_H^{ps}\cong M_{dR},
\end{equation*}
constructed as follows. A Higgs bundle $(\bar\pa_E,\Phi)$ is polystable if and only if there exists a hermitian metric $h$ on $E$ that satisfies the Hitchin equation: 
\begin{equation*}\label{Hitchineq}
F_{(\bar\pa_E,\Phi)} + [\Phi,\Phi^{\star_h}] = 0,
\end{equation*}
where $F_{(\bar\pa_E,\Phi)}$ is the curvature of the Chern connection $\bar\pa_E+ \pa_E^h$ associated to the pair $(\bar\pa_E,\Phi)$ and $\Phi^{\star_h}$ is the adjoint of $\Phi$ with respect to $h$. We call such a metric \emph{harmonic}. Given such a harmonic metric it turns out that the connection 
\begin{equation*}
D = \bar\pa_E+ \pa_E^h + \Phi+ \Phi^{\star_h}
\end{equation*}
is a completely reducible flat connection which is irreducible if and only if $(\bar\pa_E,\Phi)$ is stable. Conversely, a flat connection is completely reducible if and only if it comes from a polystable Higgs bundle through the above procedure. 
\end{theorem}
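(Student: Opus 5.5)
The proof splits into four pieces: an algebraic computation showing that a harmonic metric gives a flat connection, an analytic existence theorem in each direction, and a uniqueness statement that makes the two constructions mutually inverse. All statements are up to $\mathcal G^{\C}$, and stable/polystable summands can be treated one at a time.

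\textbf{Step 1: flatness from the Hitchin equation.} Suppose $h$ satisfies $F_{(\bar\pa_E,\Phi)}+[\Phi,\Phi^{\star_h}]=0$, and set $D=\bar\pa_E+\pa_E^h+\Phi+\Phi^{\star_h}$. I would expand $F_D$ by type:
\begin{itemize}
\item the $(2,0)$ and $(0,2)$ parts vanish because $\Sigma$ is a curve;
\item the mixed terms are $\bar\pa_E\Phi$ and $\pa_E^h\Phi^{\star_h}$, which vanish by holomorphicity of $\Phi$ and its $h$-adjoint;
\item the remaining $(1,1)$ part is exactly $F_{(\bar\pa_E,\Phi)}+[\Phi,\Phi^{\star_h}]$.
\end{itemize}
Hence $D$ is flat.

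\textbf{Step 2: reducibility matches.} A $D$-invariant subbundle yields, by the standard second fundamental form argument with the metric $h$, a $\Phi$-invariant holomorphic subbundle of degree zero that splits orthogonally. This gives complete reducibility of $D$ and the equivalence ``irreducible $\iff$ stable''.

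\textbf{Step 3: polystable Higgs bundle $\Rightarrow$ harmonic metric (Hitchin--Simpson).} For a stable $(\bar\pa_E,\Phi)$ I would minimise Donaldson's functional on the space of hermitian metrics, or equivalently run the heat flow
\begin{equation*}
h^{-1}\pa_t h = -\sqrt{-1}\Lambda\big(F_h+[\Phi,\Phi^{\star_h}]\big).
\end{equation*}
\begin{itemize}
\item Long-time existence follows from parabolic theory and the maximum principle applied to $|F_h+[\Phi,\Phi^{\star_h}]|^2$.
\item The crux is a uniform $C^0$ bound on $\log(h_0^{-1}h_t)$. Argue by contradiction: if the bound fails, rescale $s_t=\log(h_0^{-1}h_t)$, extract a weak $L^2_1$ limit, and use Uhlenbeck--Yau regularity of weakly holomorphic projections. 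The eigenvalues of the limit then produce a $\Phi$-invariant coherent (hence, on a curve, subbundle) destabilising filtration, contradicting stability.
\item With $C^0$ control, bootstrap to $C^\infty$ bounds and take a convergent subsequence.
\end{itemize}

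\textbf{Step 4: completely reducible flat connection $\Rightarrow$ Higgs bundle (Donaldson--Corlette).} For an irreducible $D$, find a $\pi_1$-equivariant harmonic map $\tilde\Sigma\to\GL(n,\C)/\mathrm U(n)$ by heat flow. Irreducibility of the representation prevents the energy from escaping to infinity, since the target is nonpositively curved. Decompose $D=d_h+\Psi$ into $h$-unitary and $h$-hermitian parts, and write $\Psi=\Phi+\Phi^{\star_h}$. Harmonicity $d_h^\star\Psi=0$ together with flatness, split into types, gives $\bar\pa_E\Phi=0$ and the Hitchin equation. On a curve this step is immediate with no Siu--Sampson Bochner argument needed.

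\textbf{Step 5: bijectivity.} Harmonic metrics are unique up to automorphism: for stable objects up to scale, by convexity of the Donaldson functional along geodesics in the space of metrics. This makes the two constructions well defined on $\mathcal G^{\C}$-orbits. They are inverse to each other because each sends $(\bar\pa_E,\Phi,h)$ to the same triple $(D,h)$.

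I expect the \emph{main obstacle} to be the $C^0$ estimate in Step 3, where stability must be converted into an analytic bound via the weak-limit/destabilising-subsheaf argument. Everything else is algebraic or standard elliptic and parabolic theory.
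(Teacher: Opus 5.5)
The paper does not prove this statement. It recalls it, with no argument and no citation, as the classical non-abelian Hodge correspondence of Hitchin, Simpson, Donaldson and Corlette, and uses it only as background for the conformal limit. Your plan is the standard proof from the literature and its main lines are sound. The type decomposition in Step 1 is right, and so is the Simpson/Uhlenbeck--Yau contradiction argument for the $C^0$ bound. Your remark that on a curve flatness plus $d_h^{\star}\Psi=0$ already gives $\bar\pa_E\Phi=0$ and Hitchin's equation is also correct.

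Your plan does not establish the implication `harmonic metric $\Rightarrow$ polystable', which is the `if' half of the first equivalence. With it you also lose `$D$ irreducible $\Rightarrow$ $(\bar\pa_E,\Phi)$ stable'. Step 2 only runs from $D$-invariant subbundles to $\Phi$-invariant holomorphic ones. The converse needs the Higgs-side Chern--Weil formula: for a $\Phi$-invariant holomorphic subbundle $F$ with $h$-orthogonal projection $\pi_F$,
\[
\deg F=\frac{\sqrt{-1}}{2\pi}\int_\Sigma\Tr\bigl(\pi_F\,(F_{(\bar\pa_E,\Phi)}+[\Phi,\Phi^{\star_h}])\bigr)-c\,\bigl\|(\bar\pa_E+\Phi)\pi_F\bigr\|_{L^2}^2,\qquad c>0 .
\]
Under Hitchin's equation this gives $\deg F\le 0$. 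Equality holds only if $(\bar\pa_E+\Phi)\pi_F=0$, and then, taking $h$-adjoints, $D\pi_F=0$. So a degree-zero $\Phi$-invariant subbundle splits off orthogonally as both a Higgs summand and a flat summand. Induction on rank then gives polystability, and irreducibility of $D$ forces $\deg F<0$.

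There is a second, smaller omission in Step 5. The map from flat connections to Higgs bundles is well defined only once the harmonic metric of a completely reducible $D$ is unique up to $\mathrm{Aut}(D)$. That uniqueness comes from convexity of the energy of equivariant maps into the nonpositively curved space $\GL(n,\C)/\mathrm U(n)$ (Hartman, Corlette). Convexity of the Donaldson functional handles only the Higgs side.
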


We are ready to describe the conformal limit. Let $(\bar\pa_E,\Phi)$ be a stable Higgs bundle. Act by the $\C^{\star}$-action to obtain the stable Higgs bundle $(\bar\pa_E,R\Phi)$ for any $R>0$. For $\xi \in \R$ consider the $\lambda$-connection 
\begin{equation*}
(\xi, \bar\pa_E+ \xi R\Phi^{\dagger_{h_R}}, \xi\pa_E^{\dagger_{h_R}}+ R\Phi).
\end{equation*}
The equivalence class $[(\xi, \bar\pa_E+ \xi R\Phi^{\dagger_{h_R}}, \xi\pa_E^{\dagger_{h_R}}+ R\Phi)] \in \pi^{-1}(\xi)$ is part of the real twistor line in $M_{\Hod}$ passing through $(\bar\pa_E,\phi)$. Act by the $\C^{\star}$-action to obtain 
\begin{equation*}
\xi^{-1}\cdot(\xi, \bar\pa_E+ \xi R\Phi^{\dagger_{h_R}}, \xi\pa_E^{\dagger_{h_R}}+ R\Phi)= (1,\bar\pa_E+ \xi R\Phi^{\dagger_{h_R}}, \pa_E^{\dagger_{h_R}}+ \xi^{-1}R\Phi).
\end{equation*}
The term on the right represents a particular element in $M_{\dR}$; in particular the connection 
\begin{equation*}
\bar\pa_E + \pa_E^{\dagger_{h_R}} + \xi^{-1}R\Phi + \xi R\Phi^{\dagger_{h_R}}
\end{equation*}
is flat. Let $\hbar := R^{-1}\xi$ and write 
\begin{equation*}
D_{R,\hbar} = \bar\pa_E + \pa_E^{\dagger_{h_R}} + \hbar^{-1}\Phi +  \hbar R^2\Phi^{\dagger_{h_R}}.
\end{equation*}
The limiting flat connection $\lim\limits_{R\to 0} D_{R,\hbar}$, when it exists, is a map from the moduli space of Higgs bundles $M_H$ to the de Rham moduli space $M_{dR}$ called the $\hbar$-conformal limit. Given a \textit{stable} Higgs pair $[(\bar\pa_E,\Phi)]$, let $[(\bar\pa_0,\Phi_0)]:=\lim\limits_{\xi\to 0}\xi\cdot [(\bar\pa_E,\Phi)]$ be its $\mathbb C^{\star}$-limit. Then \cite{CW} shows that there is a gauge in which the $\hbar$-conformal limit takes the form 
\begin{equation}
    \mathcal {CL}_{\hbar}(\bar\pa_E,\Phi) = \hbar^{-1}\Phi + \bar\pa_E + \pa_0^{\dagger_{h_0}} + \hbar\Phi_0^{\dagger_{h_0}}.
\end{equation}
Here, $h_0$ is the harmonic metric for the Higgs bundle $(\delbar_0, \Phi_0)$. 
The $\hbar$-conformal limit is discontinuous when its domain is the full moduli space of stable Higgs bundles $M_H^s\subset M_H$. However, when restricted to appropriately defined holomorphic Lagrangian subspaces, it becomes a biholomorphism. Specifically, given $[(\bar\pa_0, \Phi_0)]\in M_H^{\mathbb C^{\star}}$ we define 
\begin{equation*}
    W(\bar\pa_0, \Phi_0):= \{ [(\lambda, \bar\pa_E,\nabla_{\lambda})]\in M_{Hod}: \lim\limits_{\xi\to 0} \xi\cdot [(\lambda, \bar\pa_E,\nabla_{\lambda})] = [(\bar\pa_0, \Phi_0)]\}
\end{equation*}
Let $W^i(\bar\pa_0, \Phi_0):= W(\bar\pa_0, \Phi_0)\cap\pi^{-1}(i)$ for $i\in\{0,1\}$. Then it holds that
\begin{theorem}[Theorem 1.1 in \cite{CW}]\label{CWtheorem}
    $W^i(\bar\pa_0, \Phi_0)\in \pi^{-1}(i)$ are holomorphic Lagrangian submanifolds. The $\hbar$-conformal limit maps $W^0(\bar\pa_0, \Phi_0)$ biholomorphically onto $W^1(\bar\pa_0, \Phi_0)$.
\end{theorem}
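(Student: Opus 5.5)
We would prove Theorem~\ref{CWtheorem} by the Bia\l{}ynicki-Birula slice method. Fix the $\C^{\star}$-fixed stable Higgs bundle $(\bar\pa_0,\Phi_0)$ with harmonic metric $h_0$. Being fixed, it comes with a holomorphic $\mathrm{U}(1)$-family of gauge transformations, whose infinitesimal generator $\psi$ acts semisimply on $\Omega^{p,q}(\End E)$. We then have the deformation complex
\begin{equation*}
C^0=\Omega^0(\End E)\xrightarrow{D_0} C^1=\Omega^{0,1}(\End E)\oplus\Omega^{1,0}(\End E)\xrightarrow{D_0} C^2=\Omega^{1,1}(\End E),
\end{equation*}
with $D_0=(\bar\pa_0+[\Phi_0,\cdot\,])$, and it splits into weight spaces of $\psi$. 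We will define the \emph{Hodge slice} as the set of $(\alpha,\beta)\in C^1$ satisfying
\begin{equation*}
D_0''(\alpha,\beta)+[\alpha,\beta]=0,\qquad D_0^{\dagger_{h_0}}(\alpha,\beta)=0.
\end{equation*}
By standard Kuranishi theory and elliptic regularity, this slice is a local chart of $M_H$ near $[(\bar\pa_0,\Phi_0)]$. The BB slice $S^+$ consists of the elements of the Hodge slice lying in the strictly positive weight spaces. On those spaces the quadratic term $[\alpha,\beta]$ has weight $\ge$ the weights of its arguments, which gives two facts. First, $S^+$ is an affine space, a graph over $\mathbb{H}^1_{>0}$ via a polynomial map. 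Second, $S^+$ is preserved by the $\C^{\star}$-action $t\cdot(\alpha,\beta)=(t^{w}\alpha,t^{w+1}\beta)$, weighted appropriately.

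The key steps, in order, are these.

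\textbf{(1) $S^+$ parametrises $W^0$.} For $(\alpha,\beta)\in S^+$, the $\C^{\star}$-flow of $(\bar\pa_0+\alpha,\Phi_0+\beta)$, after gauging by $\exp(\log t\,\psi)$, converges to $(\bar\pa_0,\Phi_0)$ as $t\to0$, because every weight is positive. Conversely, a point of $W^0$ eventually enters the Kuranishi chart, and $\C^{\star}$-equivariance of the chart forces it into the positive-weight part. Injectivity follows from freeness of the action at stable points. This identifies $S^+\cong W^0$ as a smooth submanifold of half dimension. Isotropy follows because $\omega_{\C}$ pairs weight $k$ with weight $1-k$, so it vanishes on the positive-weight part (weights $\ge1$).

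\textbf{(2) The same slice parametrises $W^1$ via $\lambda$-connections.} The map $(\alpha,\beta)\mapsto \bar\pa_0+\alpha+\pa_0^{\dagger_{h_0}}+\hbar\Phi_0^{\dagger_{h_0}}+\hbar^{-1}(\Phi_0+\beta)$ yields flat connections. Flatness holds because $h_0$ is harmonic and the cross terms have positive weight and vanish by the slice equation together with the weight-grading. The same weight argument shows these points flow under $\C^{\star}$ in $M_{Hod}$ to $(\bar\pa_0,\Phi_0)$. This gives a holomorphic injective immersion $S^+\to W^1$, whose image is Lagrangian by the identical weight argument.

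\textbf{(3) The conformal limit realises this map.} Write $(\bar\pa_E,\Phi)=g\cdot(\bar\pa_0+\alpha,\Phi_0+\beta)$. Rescale by $R$ and gauge by $R^{\psi}$ so that $(\bar\pa_E,R\Phi)$ becomes $(\bar\pa_0+\alpha_R,R(\Phi_0+\beta_R))$ with $\alpha_R,\beta_R\to0$ polynomially in $R$. Then solve Hitchin's equation for $h_R$ as a perturbation of $R^{-\psi}$-rescaled $h_0$, using the implicit function theorem for the linearised operator at the fixed point. Inserting this into $D_{R,\hbar}$ and undoing the gauge, the terms $\hbar R^2\Phi^{\dagger_{h_R}}$ and $\pa^{\dagger_{h_R}}$ converge to $\hbar\Phi_0^{\dagger_{h_0}}$ and $\pa_0^{\dagger_{h_0}}$, which produces formula \eqref{HCM}.

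The main obstacle is step (3). We need uniform control of the harmonic metric $h_R$ as $R\to0$, that is, invertibility of the linearised Hitchin operator uniformly along the rescaled family, and we need the error terms to be $o(1)$ after the weight gauge. We would attack this by writing $h_R=h_0 e^{\gamma_R}$ in the $R^{\psi}$-gauge and showing that the nonlinear equation for $\gamma_R$ is a small perturbation of the equation at $(\bar\pa_0,\Phi_0)$. The linearisation there is $\Delta+$ a positive term, invertible on trace-free sections by stability, so a contraction-mapping argument gives $\gamma_R=O(R^{\delta})$. Finally, we would obtain surjectivity onto $W^1$ (and hence biholomorphicity) from the analogous converse argument in step (2): flowing a point of $W^1$ into the Kuranishi chart of the fixed point in $M_{Hod}$ at $\lambda=1$.
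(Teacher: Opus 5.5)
The paper does not prove this statement; it cites it from \cite{CW}. Your architecture is the one used there, and it is mirrored in the paper's quiver version: Hodge slice at the fixed point, BB slice as its positive-weight part, $S^+\cong W^0$ via $\C^{\star}$-equivariance of the Kuranishi chart, and the explicit family $\bar\pa_0+\alpha+\pa_0^{\dagger_{h_0}}+\hbar\Phi_0^{\dagger_{h_0}}+\hbar^{-1}(\Phi_0+\beta)$ parametrizing $W^1$. Your flatness check is correct: it uses exactly the two slice equations and Hitchin's equation for $h_0$. The genuine gap is in step (3).

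\textbf{The gap in step (3).} In the $R^{\psi}$-gauge the family $D_{R,\hbar}$ does not converge. After gauging, $(\bar\pa_E,R\Phi)$ becomes $(\bar\pa_0+\alpha_R,\Phi_0+\beta_R)$, not $R(\Phi_0+\beta_R)$, because the point is fixed. So in that gauge $D_{R,\hbar}$ contains $\hbar^{-1}R^{-1}(\Phi_0+\beta_R)$ and blows up. The limit only appears after you undo the gauge. Then the harmonic metric of $(\bar\pa_E,R\Phi)$ is $h_0(R^{\psi}e^{\gamma_R}R^{\psi}\,\cdot,\cdot)$, and the terms $\hbar R^2\Phi^{\dagger_{h_R}}$ and $\pa^{h_R}$ contain, to first order, $\mathrm{Ad}_{R^{-\psi}}$ applied to $[\Phi_0^{\dagger_{h_0}},\gamma_R]$ and to $\pa_0\gamma_R$. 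Write $\End_k=\bigoplus_j\Hom(E_j,E_{j+k})$, where $E=\bigoplus_j E_j$ is the splitting of the fixed point. Conjugation by $R^{-\psi}$ multiplies the $\End_k$-component by $R^{\pm k}$, hence by $R^{-|k|}$ for one sign of $k$. Since $\gamma_R$ is $h_0$-self-adjoint, components of both signs are present. Consequently a uniform bound $\gamma_R=O(R^{\delta})$ from an unweighted contraction argument does not give convergence to \eqref{HCM}. You need a graded estimate $(\gamma_R)_k=o(R^{|k|})$ in $C^1$, for instance $O(R^{|k|+2})$. This is the key analytic input of \cite{CW}, and Lemma~\ref{metric} is precisely its quiver analogue; the paper states explicitly that this quantitative refinement is what the convergence needs.

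\textbf{How to obtain the graded estimate.} Three facts are needed.
\begin{enumerate}
\item The rescaled element $(\alpha_R,\beta_R)$ still lies in $S^+$. Hence the linear part of the Hitchin error at $h_0$, namely $D'(\alpha_R,\beta_R)$ and its adjoint, vanishes.
\item The remaining quadratic error $[\alpha_R,\alpha_R^{\dagger}]$, $[\beta_R,\beta_R^{\dagger}]$ has $\End_k$-component of size $O(R^{|k|+2})$. Pieces of weights $a,b\ge1$ have size $R^{a}$ and $R^{b}$, and their product shifts the degree by $|a-b|\le a+b-2$.
\item The linearized operator at $h_0$ commutes with the circle action of the fixed point, so it preserves the grading.
\end{enumerate}
With these you can solve order by order in $|k|$, or run your contraction in a norm that weights the $\End_k$-component by $R^{-|k|-2}$. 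Only then do the amplified terms tend to zero.

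\textbf{Smaller points.}
\begin{itemize}
\item Injectivity in step (1) comes from local injectivity of the Kuranishi chart combined with $\C^{\star}$-equivariance, not from freeness alone.
\item Both injectivity in step (2) and surjectivity onto $W^1$ require a Hodge slice for $\lambda$-connections at the fixed point. That slice sits over $\lambda=0$, not $\lambda=1$, and the $\C^{\star}$-flow is what brings points of $W^1$ into it.
\end{itemize}
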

Arguably the biggest novelty in \cite{CW} is the introduction of the Bialynicki-Birula slice $S^+_{(\bar\pa_0,\Phi_0)}$ associated to a $\C^{\star}$-fixed point $[(\bar\pa_0,\Phi_0)]$. This is a finite dimensional subvariety of $\mathcal H^{ps}$ which is diffeomorphic to $W^0(\bar\pa_0,\Phi_0)$. In fact it should be thought of as a non-linear analog of the tangent space of the upward flow $W^0(\bar\pa_0,\Phi_0)$ at the origin. The crucial observation is that for any point $[(\bar\pa_E,\Phi)]\in W^0(\bar\pa_0,\Phi_0)$ there is a representative of $[(\bar\pa_E,\Phi)]$ in its equivalence class which is of the form $(\bar\pa_0+\beta,\Phi_0+\phi)$ with $(\beta,\phi)\in S^+_{(\bar\pa_0,\Phi_0)}$. We briefly explain how this object is defined in the Higgs bundle case in order to motivate the similar definitions in the quiver varieties setting. 

Let $(\bar \del_E,\Phi) $ a stable Higgs bundle with Chern connection $\bar \del_E+\del_E$, then the connection $D=\bar \del _E +\del_E+\Phi +\Phi^*$ is flat. Write $D=D'+D''$ where $D'=\del_E+\Phi^*$ and $D''=\bar\del_E+\Phi$. The authors in \cite{CW} start by looking at the deformation complex at a stable Higgs bundle:
\begin{equation}\label{DComplex}
C(\bar\pa_E,\Phi): \Omega^0(\mathfrak{sl}(E))\xrightarrow{D''}\Omega^{0,1}(\mathfrak{sl}(E))\oplus\Omega^{1,0}(\mathfrak{sl}(E))\xrightarrow{D''}\Omega^{1,1}(\mathfrak{sl}(E)).
\end{equation}
Since the operators satisfy the K\"ahler  identities:
\begin{equation*}
(D'')^*=-i[\Lambda,D'], (D')^*=i[\Lambda,D'']
\end{equation*}
we can conclude that $\ker(D'')^*=\ker(D')$. The harmonic representation of the tangent space $H^1(C(\bar\pa_E,\Phi))$ is given by
\begin{equation*}
\begin{split}
\mathcal H^1(\bar\pa_E,\Phi):= \{ (\beta,\phi)\in \Omega^{0,1}(\mathfrak{sl}(E))\oplus\Omega^{1,0}(\mathfrak{sl}(E)) | &\\
D''(\beta,\phi) = 0,&~ D'(\beta,\phi)=0\}.
\end{split}
\end{equation*}
The right arrow in \eqref{DComplex} is given by the linearization of the Higgs bundle equation $\bar\pa_E\Phi=0$. In order to define the \emph{Hodge slice} one replaces the linearization with the actual Higgs bundle equation:
\begin{definition}
     The Hodge slice is given by:
     \begin{equation}
         S_{(\bar \del_E,\Phi)}:=\{(\beta,\phi) \in \Omega^{(0,1)}(\mathfrak{sl}(E))\oplus \Omega^{(1,0)}(\mathfrak{sl}(E))| D''(\beta,\Phi)+[\beta,\Phi]=0,D'(\beta,\Phi)=0\}.
     \end{equation}
\end{definition}
This small change is crucial for the following reason. $S_{(\bar \del_E,\Phi)}$ is a subvariety of $\mathcal H^{ps}$ as opposed to $\mathcal H^1(\bar\pa_E,\Phi)$. Next, the authors of \cite{CW} observe that at a $\C^{\star}$-fixed point the induced $\C^{\star}$-action on $S_{(\bar \del_E,\Phi)}$ introduces a grading compatible with the mapping properties of $D''$ and $D'$. This allows them to restrict to $(\beta,\phi)$ which are in the positive and non-negative pieces of the grading respectively and thus define 
\begin{definition}
     The Bia\l{}inicky-Birula slice is given by:
     \begin{equation}
\begin{split}
S^+_{(\bar\pa_0,\Phi_0)}:= \{ (\beta,\phi)\in \Omega^{0,1}(N^+)\oplus\Omega^{1,0}(L\oplus N^+) ~|~ 
D''(\beta,\phi)+[\beta,\phi]  = 0,D'(\beta,\phi)=0\}.
\end{split}
\end{equation}
\end{definition}

\section{A Hodge moduli space for Nakajima quiver varieties}

 \subsection{Hodge-Nakajima moduli space} In order to effectively define the core objects appearing in the conformal limit for Higgs bundles it was meaningful to embed the Higgs bundle moduli space into the Hodge moduli space $M_{\Hod}$ parametrizing polystable $\lambda$-connections. In this section we construct a similar space for quiver representations and identify core objects inside it. Throughout we fix a real parameter $\zeta_{\R}$ so that 
\begin{equation*}
(\zeta_{\R},0) \in Z_{\mathbf{v}}\oplus(Z_{\mathbf{v}}\otimes\C)
\end{equation*}
is generic \eqref{generic}.
\begin{definition}
We define the \textbf{Hodge-Nakajima moduli space} to be the analytic space 
\begin{equation}
\mathcal M_{\zeta_{\R}} := \bigcup\limits_{\xi \in \C} \mathfrak M_{((1-|\xi| ^2)\zeta_{\R},-2i\xi \zeta_{\R})}.
\end{equation}
\end{definition}
\begin{remark}
The above definition comes from restricting the twistor space of the quiver variety. We found the description of the twistor space in Section $4.3$ of \cite{HKR} to be a very clear so we have chosen to omit it from our paper. 
\end{remark}
As with the Hodge moduli space for Higgs bundles in order to even attempt a definition of the conformal limit we need an explicit description of the twistor line (hyperk\"ahler rotation) associated to a point $(B_h,i_k,B_{\bar h},j_k) \in \mathfrak M_{(\zeta_\R,0)}$. For $\xi \in \C$  the twistor line is given by:
\begin{equation}\label{hrotation}
     (B_h,i_k,j_k) \longmapsto (B_h-\epsilon(h)\xi B_{\bar{h}}^\dagger,i_k-\xi j_k^\dagger,j_k+\xi i_k).
\end{equation}
In the following lemma we check that indeed the twisting by $\xi$ maps points from $\mathfrak M_{(\zeta_{\R},0)}$ to $\mathfrak M_{((1-|\xi| ^2)\zeta_{\R},-2i\xi \zeta_{\R})}$. 
\begin{lemma}
    If the point $q=(B_h,i_k,B_{\bar{h}},j_k )$ satisfies the equations:
\begin{equation}\label{initial}
\begin{split}
    \mu_\R(q) &=\zeta_\R \\
    \mu_\C(q) &=0
    \end{split}
    \end{equation}
    then the hyperk\"ahler rotation $q_\xi=\HK_\xi(q)$ satisfies the equations:
    \begin{equation}\label{hr}
    \begin{split}
        \mu_\R(q_\xi) &=(1-\lvert\xi \rvert^2)\zeta_\R \\
        \mu_\C(q_\xi) &= -2i\xi\zeta_\R  
    \end{split}       
    \end{equation}
\end{lemma}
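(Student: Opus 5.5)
The plan is a direct computation: substitute the rotated data into the explicit formulas for $\mu_\R$ and $\mu_\C$, and sort the result by powers of $\xi$ and $\bar\xi$. Write $q_\xi=(B_h-\eps(h)\xi B_{\hb}^\d,\; i_k-\xi j_k^\d,\; j_k+\xi i_k^\d)$, reading \eqref{hrotation} with $i_k^\d$ in the last slot, as in the twistor line of the introduction. Both moment maps are quadratic in the data, so each component at vertex $k$ splits into a $\xi$-independent part, parts linear in $\xi$ or $\bar\xi$, and parts of order $\xi^2$ or $|\xi|^2$. The $\xi$-independent parts are $\mu_\R(q)=\zeta_\R$ and $\mu_\C(q)=0$. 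Everything else should be expressible through $\mu_\R(q)$, $\mu_\C(q)$ and $\mu_\C(q)^\d$.

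\textbf{Complex moment map.} I would carry out these steps in order.
\begin{enumerate}
\item Expand $\sum_{\mathrm{in}(h)=k}\eps(h)B'_hB'_{\hb}+i'_kj'_k$ for the rotated data.
\item Collect the terms linear in $\xi$. Using $\eps(\hb)=-\eps(h)$ and $\eps(h)^2=1$, they should assemble to
\[
\xi\Big(\sum_{\mathrm{in}(h)=k}\big(B_hB_h^\d-B_{\hb}^\d B_{\hb}\big)+i_ki_k^\d-j_k^\d j_k\Big)=\frac{2}{i}\,\xi\,\mu_\R(q)_k=-2i\xi\zeta_\R .
\]
\item Collect the $\xi^2$ terms. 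These are $-\xi^2\big(\sum\eps(h)B_{\hb}^\d B_h^\d + j_k^\d i_k^\d\big)$. Reindexing the sum by $h\mapsto\hb$ identifies this with $-\xi^2\big(\mu_\C(q)_k\big)^\d$, which vanishes by \eqref{initial}.
\end{enumerate}

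\textbf{Real moment map.} The same bookkeeping applies to $\frac{i}{2}\big(\sum B'_hB_h'^\d-B_{\hb}'^\d B'_{\hb}+i'_ki_k'^\d-j_k'^\d j'_k\big)$.
\begin{enumerate}
\item The $|\xi|^2$ terms come from products of the two rotated pieces. Each pair $(h,\hb)$ and the $(i,j)$ pair contribute with the opposite sign to the original terms, so these should give $-|\xi|^2\,\mu_\R(q)_k$.
\item The cross terms in $\xi$ alone should equal a multiple of $\xi\,\mu_\C(q)^\d_k$.
\item The cross terms in $\bar\xi$ alone should equal the adjoint expression, a multiple of $\bar\xi\,\mu_\C(q)_k$.
\item Both kinds of cross terms therefore vanish, leaving $(1-|\xi|^2)\zeta_\R$.
\end{enumerate}

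\textbf{Main obstacle.} The only real difficulty is sign bookkeeping. One has to track the factors $\eps(h)$ when edges are reindexed by $h\mapsto\hb$, and make sure that every term of the sum over $\{h:\mathrm{in}(h)=k\}$ is matched with the correct term of $\mu_\C(q)_k$ or its adjoint. I would handle this by first checking a single edge pair $\{h,\hb\}$ together with the framing terms $(i_k,j_k)$. The general case then follows by summing over edge pairs, because the moment maps are additive over these pairs.
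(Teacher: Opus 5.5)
Your proposal is correct and is essentially the paper's own proof: the paper expands both moment maps directly and obtains $\mu_\R(q_\xi)=(1-|\xi|^2)\mu_\R(q)-i\bar\xi\,\mu_\C(q)-i\xi\,\mu_\C(q)^{\dagger}$ and $\mu_\C(q_\xi)=\mu_\C(q)-2i\xi\,\mu_\R(q)-\xi^2\mu_\C(q)^{\dagger}$, which is exactly your sorting by powers of $\xi$ and $\bar\xi$. One small correction is that no reindexing $h\mapsto\hb$ is needed for the $\xi^2$ term, since $\eps(h)B_{\hb}^{\dagger}B_h^{\dagger}=(\eps(h)B_hB_{\hb})^{\dagger}$ term by term for the same edge $h$ with $\mathrm{in}(h)=k$; such a reindexing would in fact move the sum to edges with $\mathrm{out}(h)=k$.
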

\begin{proof} First we calculate the real moment map $\mu_\R(q_\xi)$: 
   
\begin{equation*}
\begin{split}
\mu_\R(q_\xi)_k=&\frac{i}{2}\sum\limits_{\substack{\{k=\text{in}(h)\\ h\in H\}}} \Big((B_h-\epsilon(h)\xi B_{\bar{h}}^\dagger)(B_h-\epsilon(h)\xi B_{\bar{h}}^\dagger)^\dagger-(B_{\bar{h}}+\epsilon(h)\xi B_h^\dagger)^\dagger (B_{\bar{h}}+\epsilon(h)\xi B_h^\dagger) \\ 
&+(i_k-\xi (j_k)^\dagger)(i_k-\xi (j_k)^\dagger)^\dagger-(j_k^\dagger +\xi i_k)^\dagger(j_k^\dagger +\xi i_k)\Big)= \\
=& \frac{i}{2} \sum\limits_{\substack{\{k=\text{in}(h)\\ h\in H\}}} \Big((B_h-\epsilon(h)\xi B_{\bar{h}}^\dagger)(B_h^\dagger -\epsilon (h) \bar\xi B_{\bar{h}})-(B_{\bar{h}}^\dagger+\epsilon(h)\bar\xi B_h )(B_{\bar{h}}+\epsilon(h) \xi B_h^\dagger )+\\
&+(i_k -\xi j_k^\dagger)(i_k^\dagger-\bar\xi j_k) -(j_k^\dagger+\bar\xi i_k)(j_k +\xi i_k^\dagger)\Big)= \\
=& \frac{i}{2}\sum\limits_{\substack{\{k=\text{in}(h)\\ h\in H\}}} \Big(B_h B_h^\dagger -\epsilon(h)\bar \xi B_hB_{\bar{h}}-\epsilon(h)\xi B_{\bar{h}}^\dagger B_h^\dagger +\epsilon ^2 (h)\xi \bar \xi B_{\bar{h}}^\dagger B_{\bar{h}}-B_{\bar{h}}^\dagger B_{\bar h} - \epsilon (h) \xi B_{\bar{h}}^\dagger B_h ^\dagger - \\
&- \epsilon^2 (h)\xi \bar \xi  B_h B_h^\dagger -\epsilon (h) \bar \xi B_hB_{\bar h}+i_k i_k^\dagger -\bar \xi i_kj_k -\xi j_k^\dagger i_k^\dagger +\xi \bar \xi  j_k^\dagger j_k -j_k^\dagger j_k -\xi j_k^\dagger i_k^\dagger -\bar \xi i_k j_k -\xi \bar \xi i_k i_k^\dagger\Big)= \\
=& \frac{i}{2}\sum\limits_{\substack{\{k=\text{in}(h)\\ h\in H\}}} \Big(B_h B_h^\d -\epsilon (h) \bar \xi B_h B_{\bar{h}}-\epsilon (h)\xi B_{\bar{h}}^\dagger B_h ^\dagger +\lvert\xi\rvert ^2 B_{\bar{h}}^\dagger B_{\bar{h}} -B_{\bar{h}}^\dagger B_{\bar{h}}-\epsilon (h) \xi B_{\bar{h}}^\dagger B_h^\dagger - \\
&-\epsilon (h) \bar \xi B_h B_{\bar{h}}-\lvert\xi\rvert^2 B_h B_h^\dagger +i_k i_k^\d - \bar \xi i_kj_k -\xi j_k^\d i_k^\d +\lvert\xi\rvert ^2 j_k^\d j_k -j_k^\d j_k -\xi j_k^\d i_k^\d -\bar \xi i_kj_k -\lvert\xi\rvert^2 i_ki_k^\d \Big)= \\
=& \frac{i}{2}\sum\limits_{\substack{\{k=\text{in}(h)\\ h\in H\}}} \Big((1-\lvert\xi\rvert ^2 )B_hB_h^\d -(1-\lvert\xi\rvert ^2) B_{\bar{h}}^\d B_h ^\d +(1-\lvert\xi\rvert ^2) i_k i_k^\d -(1-\lvert\xi\rvert ^2)j_k^\d j_k - \\
&- \epsilon (h) (2\bar \xi B_h B_{\bar{h}} +2\xi B_{\bar{h}}^\d B_h ^\d )-2\bar \xi i_kj_k - 2\xi j_k^\d i_k^\d \Big) = \\
=& \frac{i}{2}(1-\lvert\xi\rvert ^2)\sum\limits_{\substack{\{k=\text{in}(h)\\ h\in H\}}} (B_h B_h ^\d -B_{\bar{h}}^\d B_h ^\d +i_k i_k^\d-j_k^\d j_k) \\
-& i\bar \xi \left(\sum\limits_{\substack{\{k=\text{in}(h)\\ h\in H\}}} \epsilon (h)B_h B_{\bar{h}}+i_k j_k\right) -i\xi\left(\sum\limits_{\substack{\{k=\text{in}(h)\\h\in H\}}}\epsilon(h)B_{\bar{h}}^\d B_h^\d +j_k^\d i_k^\d \right).
\end{split}
\end{equation*}
From equations \eqref{initial} it follows that:

\begin{equation*}
    \begin{split}
        \mu_\R(q_\xi)
=& (1-\lvert\xi\rvert ^2)\mu_\R (q)-\bar \xi{i}\mu_\C (q) -\xi{i}\mu_\C (q)^{\d}= \\
=&(1-\lvert\xi\rvert ^2) \mu _\R (q) =(1-\lvert\xi\rvert ^2) \zeta _\R 
    \end{split}
\end{equation*}
Next, we calculate the complex moment map $\mu_\C(q_\xi)$:
\begin{equation*}
 \begin{split}
\mu _\C (q_\xi )_k=& \left(\sum\limits_{\substack{\{k=\text{in}(h)\\ h\in H\}}} \epsilon (h) (B_h-\epsilon (h) \xi B_{\bar{h}} ^\d) (B_{\bar{h}}+\epsilon(h) \xi B_h^\d)\right)+(i_k-\xi j_k^\d)(j_k+\xi i_k^\d)= \\
=&\left(\sum\limits_{\substack{\{k=\text{in}(h)\\ h\in H\}}}\epsilon(h) (B_h B_{\bar{h}}+\epsilon(h) \xi B_h B_{h}^\d -\epsilon(h) \xi B_{\bar{h}}^\d B_{\bar{h}}-\epsilon^2 (h) \xi ^2 B_{\bar{h}}^\d B_h ^\d )\right)+\\
+&i_k j_k +\xi i_k i_k^\d -\xi j_k ^\d j_k -\xi ^2 j_k ^\d i_k^\d = \\
=& \left(\sum\limits_{\substack{\{k=\text{in}(h)\\ h\in H\}}} \epsilon (h) (B_h B_{\bar{h}}+i_k j_k)\right) + \left(\sum\limits_{\substack{\{k=\text{in}(h)\\ h\in H\}}} \epsilon ^2(h) \xi B_h B_{h}^\d-\epsilon^2 (h) \xi B_{\bar{h}}^\d B_{\bar h}-\epsilon(h)\xi^2 B_{\bar{h}}^\d B_h^\d\right) +\\
+&\xi i_k i_k^\d -\xi j_k^\d j_k -\xi ^2 j_k^\d i_k^\d = \\
=& \mu _\C (q)+\left(\sum\limits_{\substack{\{k=\text{in}(h)\\ h\in H\}}} \xi (B_hB_h^\d-B_{\bar{h}}^\d B_{\bar{h}}+i_k i_k^\d -j_k^\d j_k )\right)- \xi^2\left(\sum\limits_{\substack{\{k=\text{in}(h)\\ h\in H\}}}\epsilon(h) B_{\bar{h}}^\d B_h^\d + j_k^\d i_k^\d\right) = \\
=& \frac{2\xi}{i}\mu_\R (q) -\xi ^2 \mu_{\C}(q)^{\dagger}.
 \end{split}   
\end{equation*}
As before, using the equations \eqref{initial} we obtain:
\begin{equation*}
    \mu_\C(q_\xi)= -2\xi i \mu_\R (q) -\xi ^2 \mu_\C (q)^{\d}=-2\xi i \mu_\R(q)= -2\xi i \zeta_\R.
\end{equation*}
\end{proof}

\subsection{Geometry of the $\C^{\star}$-action}

The space $\mathcal M_{\zeta_{\R}}$ caries a natural $S^1$-action given by 
\begin{equation}
(B_h,i_k,B_{\bar h}, j_k) \longmapsto (B_h,i_k,tB_{\bar h}, tj_k)
\end{equation}
for any $t\in S^1$. The $S^1$-action preserves the real moment map equation $\mu_{\R}(B,i,j) = \zeta_{\R}$ and scales the complex moment map equation $\mu_{\C}(t\cdot(B,i,j)) = t\mu_{\C}(B,i,j)$. Therefore the $S^1$-action sends points in $\mathfrak M_{((1-|\xi| ^2)\zeta_{\R},-2i\xi \zeta_{\R})}$ to points in $\mathfrak M_{((1-|\xi| ^2)\zeta_{\R},-2it\xi \zeta_{\R})}$ since $|\xi| = |t\xi|$. 

\begin{theorem}[\cite{Na1994}, Theorem $5.1.$]\label{CSA}
    The $S^1$-action on $\mathcal{M}_{\zeta_\R}$ has the following properties:
    \begin{enumerate}[label=(\roman*)]
    \item 
    The natural projection map $\pi: \mathcal M_{\zeta_\R}\longrightarrow   \C$ is equivariant, here $S^1$ acts on $\xi\in\C$ with weight 1,
    \item 
    It preserves the complex structure I and the metric,
    \item 
    The holomorphic symplectic form $\omega_\C$ transforms as   $\omega_\C\longrightarrow t\omega_\C$,
    \item 
    The corresponding moment map for the $S^1$-action
    \begin{equation*}
    F([B_h,i_k,B_{\bar h},j_k]) = \sum\limits_{ h \in \Omega}\|B_{\bar h}\|^2 + \sum\limits_{k=1}^n \|j_k\|^2
    \end{equation*}
    is proper,
    \item 
    The action is extended to a holomorphic $\C^*$-action. If we use the    holomorphic description            $\mathfrak{M}_{(\zeta_\R,\zeta_\C)}=H^s_{(\zeta_\R,\zeta_\C)}/G^\C_\mathbf{    v}$ then the $\C^*$-action is given by:
    \begin{equation*}
    G^\C_\mathbf{v}(B_h,i_k,B_{\bar h},j_k) \mapsto G^\C_\mathbf{v}(B_h,i_k,RB_{\bar h},Rj_k).
    \end{equation*}
    \end{enumerate}
\end{theorem}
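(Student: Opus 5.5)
The plan is to verify the five properties directly from the formulas for $\mu_{\R}$, $\mu_{\C}$, $\omega_{\C}$ and the metric \eqref{Qmetric}. Throughout, we use that the $S^1$-action $t\cdot(B_h,i_k,B_{\bar h},j_k)=(B_h,i_k,tB_{\bar h},tj_k)$ on $\mathbf M$ commutes with the $G_{\mathbf v}$-action. Since it only rescales the $\mathbf M_{\bar\Omega}$ components, it clearly commutes with left and right multiplication by unitary matrices. Hence it descends to orbits.

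\textbf{Properties (i)--(iii).} For (i), $\mu_{\R}$ involves only terms of the form $XX^{\dagger}$ or $X^{\dagger}X$, so $|t|=1$ leaves it invariant. Every monomial of $\mu_{\C}$ contains exactly one factor from $\mathbf M_{\bar\Omega}$, so $\mu_{\C}(t\cdot q)=t\mu_{\C}(q)$. Therefore the fibre $\mathfrak M_{((1-|\xi|^2)\zeta_{\R},-2i\xi\zeta_{\R})}$ is sent to the fibre over $t\xi$, which is the claimed equivariance of $\pi$ with weight one.

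For (ii), the action is complex linear in the complex structure $I$ of $\mathbf M$, so it preserves $I$. It is unitary for \eqref{Qmetric}, so it preserves the metric. The quotient metric on $\mathfrak M$ is induced from the orthogonal complement of the $G_{\mathbf v}$-orbit tangent space. Since the action is an isometry commuting with $G_{\mathbf v}$, that metric is preserved as well.

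For (iii), $\omega_{\C}$ pairs each $\mathbf M_{\Omega}$ entry with an $\mathbf M_{\bar\Omega}$ entry, so the pullback of $\omega_{\C}$ is $t\omega_{\C}$. This identity descends through the holomorphic symplectic reduction.

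\textbf{Property (iv).} The action of $e^{i\theta}$ is generated by the vector field $X=(0,0,iB_{\bar h},ij_k)$. Its contraction with the Kähler form $\omega_I=g(I\cdot,\cdot)$ is the differential of $\tfrac12\big(\sum\|B_{\bar h}\|^2+\sum\|j_k\|^2\big)$, up to the normalisation convention. This function is $G_{\mathbf v}$-invariant, so it descends to a moment map on the quotient.

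Properness is the main point. I would argue that $F$ controls the remaining coordinates as well. Take the trace of the $k$-th component of $\mu_{\R}=\zeta_{\R}$ and sum over all vertices. The $B_h$ terms with $h\in\Omega$ and $h\in\bar\Omega$ then cancel in pairs, giving
\[
\sum_k\|i_k\|^2-\sum_k\|j_k\|^2=\text{const}.
\]
This bounds $\|i\|$ in terms of $F$ on each fixed fibre.

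Bounding $B_\Omega$ is harder. Here I would follow Nakajima and use the weighted trace identity $\sum_k c_k\Tr\mu_{\R}(q)_k$ together with the absence of oriented cycles. Alternatively, I would use the result that $\mathfrak M_{(\zeta_{\R},0)}\to\mathfrak M_{(0,0)}$ is proper, and that on the affine quotient the invariants $\Tr(\text{words})$ are bounded once $F$ is bounded. The second route is available because every closed word in the doubled quiver contains a $\bar\Omega$ arrow, or passes through $i,j$ and so contains a $j$. For nonzero $\xi$ the fibres are affine, and the same argument applies uniformly, since $|\xi|^2$ is bounded on $F$-sublevel sets via $\mu_{\C}$. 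I expect this properness step to be the main obstacle, precisely because $B_\Omega$ is not directly controlled by $F$.

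\textbf{Property (v).} The formula $(B_h,i_k,RB_{\bar h},Rj_k)$ defines an action of $\C^{\star}$ on $\mathbf M$. By the same computation as in (i), it preserves $\mu_{\C}^{-1}(0)$ and maps $\mu_{\C}^{-1}(-2i\xi\zeta_{\R})$ to $\mu_{\C}^{-1}(-2iR\xi\zeta_{\R})$. It commutes with $G^{\C}_{\mathbf v}$.

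It also preserves stability. By Nakajima's King-type criterion, stability at $\zeta_{\R}$ is a condition on $B$-invariant subspaces, namely those invariant under all $B_h$, containing $\mathrm{im}\, i$ or contained in $\ker j$. Scaling $B_{\bar h}$ and $j$ by $R\neq0$ does not change this family of subspaces. So the action preserves $H^s$, and it descends holomorphically to $H^s/G^{\C}_{\mathbf v}\cong\mathfrak M$.

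Finally, on $S^1\subset\C^{\star}$ this action agrees with the one above, so the holomorphic $\C^{\star}$-action extends the $S^1$-action.
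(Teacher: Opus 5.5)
The paper gives no proof of this statement; it quotes it from Nakajima. Your proposal therefore has to stand on its own.

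\textbf{Parts (i)--(iii) and (v).} These are fine as you argue them. One caveat in (v): for $\xi\neq0$, a rescaling with $|R|\neq1$ can change the sign of the real parameter $(1-|\xi|^2)\zeta_{\R}$. This is harmless, because the complex parameter $-2i\xi\zeta_{\R}$ is then generic and every point of $\mu_{\C}^{-1}(-2i\xi\zeta_{\R})$ is stable for every real parameter. Your sentence ``the action preserves $H^s$'' does not cover this case as written.

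\textbf{The gap: properness in (iv) on the whole family $\mathcal M_{\zeta_{\R}}$.} Your first route is the right one, and writing it out shows where $\xi$ enters. For any weights $c\in\R^n$,
\[
\sum_k c_k\Tr\bigl(-2i\,\mu_{\R}(x)_k\bigr)=\sum_{h\in H}\bigl(c_{\text{in}(h)}-c_{\text{out}(h)}\bigr)\|B_h\|^2+\sum_k c_k\bigl(\|i_k\|^2-\|j_k\|^2\bigr).
\]
If $\Omega$ has no oriented cycles, choose $c_k\ge1$ with $c_{\text{in}(h)}-c_{\text{out}(h)}\ge1$ for all $h\in\Omega$. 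On the fibre over $\xi$ this gives
\[
\sum_{h\in\Omega}\|B_h\|^2+\sum_k\|i_k\|^2\le(1-|\xi|^2)\,\theta_c+C_c\,F,\qquad \theta_c:=\sum_k c_k\Tr\bigl(-2i\,\zeta_{\R}^k\bigr),
\]
where $C_c$ is the largest of the numbers $c_k$ and $c_{\text{in}(h)}-c_{\text{out}(h)}$, $h\in\Omega$. This proves properness on $\mathfrak M_{(\zeta_{\R},0)}$, the only $S^1$-invariant fibre, and over every region $\{|\xi|\le r\}$.

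Your step ``$|\xi|^2$ is bounded on $F$-sublevel sets via $\mu_{\C}$'' is circular. The equation $\mu_{\C}=-2i\xi\zeta_{\R}$ only yields $|\xi|\lesssim\|x_{\Omega}\|\,F^{1/2}$. The bound on $\|x_{\Omega}\|^2$ above contains the term $-\theta_c|\xi|^2$, which grows with $|\xi|$ when $\theta_c<0$.

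The claim is in fact false in general. Take one vertex, no edges, $\mathbf v=\mathbf w=1$ and $\zeta_{\R}=\tfrac{i}{2}\lambda$ with $\lambda<0$. The fibre over $\xi$ is the single point with $i_1j_1=\xi\lambda$, $|j_1|^2=|\lambda|$, $|i_1|^2=|\xi|^2|\lambda|$. Hence $F\equiv|\lambda|$ on $\mathcal M_{\zeta_{\R}}\cong\C$, and $F$ is not proper.

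So (iv) holds fibrewise and over bounded $\xi$. It holds globally only under a sign condition, for example $\theta_c>0$ for some admissible $c$; then $0\le\theta_c(1-|\xi|^2)+C_cF$ bounds $|\xi|$.

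\textbf{Two smaller points.} First, your alternative route via $\mathfrak M_{(\zeta_{\R},0)}\to\mathfrak M_{(0,0)}$ gives no independent bound. Each generating invariant contains a $\bar\Omega$- or $j$-factor, but also $\Omega$- and $i$-factors, so bounding it by $F$ already needs the bound on $x_{\Omega}$.

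Second, acyclicity of $\Omega$ is an extra hypothesis; it does not follow from the absence of edge loops. Take two vertices joined by two edges with $\Omega=\{h_1:1\to2,\ h_2:2\to1\}$, $\mathbf v=(1,1)$, $\mathbf w=(1,0)$, and generic $\zeta_{\R}^k=\tfrac{i}{2}\lambda_k$ with $\lambda_1+\lambda_2>0$. All points with $B_{\bar\Omega}=0$, $j=0$, $|i_1|^2=\lambda_1+\lambda_2$ and $|B_{h_1}|^2-|B_{h_2}|^2=\lambda_2$ lie in $F^{-1}(0)$. On them $|\Tr(B_{h_2}B_{h_1})|$ is unbounded, so $F$ is not proper even on $\mathfrak M_{(\zeta_{\R},0)}$. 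You should fix an acyclic orientation before proving (iv); this is always possible, and the paper tacitly assumes it in its last section.
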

The $\C^{\star}$-action defines a Bia\l{}ynicki-Birula type stratification of $\mathcal M_{\zeta_{\R}}$. It follows from the fourth part of Theorem \ref{CSA} that for any element $[(B_h,i_k,B_{\bar h},j_k)] \in \mathcal M_{\zeta_{\R}}$ the $\C^{\star}$-limit 
\begin{equation}
\lim\limits_{\xi\to 0}[(B_h,i_k,\xi B_{\bar h},\xi j_k)] = [(B_h^0,i_k^0,B_{\bar h}^0,j_k^0)]
\end{equation}
always exists and is contained in $\mathfrak M_{(\zeta_{\R},0)}$. Let $\mathfrak F$ denote the fixed point set of the $\C^{\star}$-action and write $\mathfrak F = \bigsqcup\limits_{a=1}^p \mathcal F_a$ for its decomposition into connected components. $\mathfrak F$ is a non-singular compact subvariety of $\mathbb M_{(\zeta_{\R},0)}$. 
\begin{definition} Given a fixed point component $\mathfrak F_a$ we define the \textit{attractor set} of $\mathfrak F_a$ as:
\begin{equation}
W_a := \{[(B_h,i_k,B_{\bar h},j_k)]\in \mathcal M_{\zeta_{\R}}~|~\lim\limits_{\xi\to 0}[(B_h,i_k,\xi B_{\bar h},\xi j_k)] \in \mathfrak F_a\}.
\end{equation}
\end{definition}
Each $W_a$ is connected and foliated by $\C^{\star}$-orbits $W_a(B_h^0,i_k^0,B_{\bar h}^0,j_k^0)$ defined as 
\begin{equation*}
W_a(B_h^0,i_k^0,B_{\bar h}^0,j_k^0):= \{[(B_h,i_k,B_{\bar h},j_k)]\in \mathcal M_{\zeta_{\R}}~|~\lim\limits_{\xi\to 0}[(B_h,i_k,\xi B_{\bar h},\xi j_k)]= [(B_h^0,i_k^0,B_{\bar h}^0,j_k^0)] \in \mathfrak F_a\}
\end{equation*}
and $\mathcal M_{\zeta_{\R}} = \bigsqcup\limits_{a} W_a$. The restrictions $W_a^0 := W_a \cap \pi^{-1}(0)$ and $W_a^1:= W_a\cap \pi^{-1}(1)$ define stratifications of the quiver variety $\mathfrak{M}_{(\zeta_\R,0)}$ and $\mathfrak{M}_{(0,-2i\zeta_\R)}$ respectively such that:
\begin{equation}
\begin{split}
        \mathfrak{M}_{(\zeta_\R,0)}&=\bigsqcup_a W_a^0 \\
    \mathfrak{M}_{(0,-2i\zeta_\R)}&=\bigsqcup_a W^1_a.
    \end{split}
\end{equation}
\begin{theorem}
The manifolds 
\begin{equation}
\begin{split}
W_a^0(B_h^0,i_k^0,B_{\bar h}^0,j_k^0) :=& W_a(B_h^0,i_k^0,B_{\bar h}^0,j_k^0)\cap \mathfrak M_{(\zeta_{\R},0)} \subset \mathfrak M_{(\zeta_{\R},0)}\\
W_a^1(B_h^0,i_k^0,B_{\bar h}^0,j_k^0) :=& W_a(B_h^0,i_k^0,B_{\bar h}^0,j_k^0)\cap \mathfrak M_{(0,-2i\zeta_{\R})} \subset \mathfrak M_{(0,-2i\zeta_{\R})}
\end{split}
\end{equation}
are holomorphic Lagrangian submanifolds of the respective spaces, diffeomorphic to $\C^N$. 
\end{theorem}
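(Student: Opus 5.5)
We treat the two cases separately. The case of $W_a^0$ reduces to the classical Bia\l{}ynicki-Birula theory on the smooth quasi-projective variety $\mathfrak M_{(\zeta_{\R},0)}$. The case of $W_a^1$ is transported from the first via the Hodge-Nakajima family $\pi:\mathcal M_{\zeta_{\R}}\to\C$.

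\textbf{The fibre over $0$.} By Theorem \ref{CSA}, $\mathfrak M_{(\zeta_{\R},0)}$ is a smooth complex manifold carrying a holomorphic $\C^{\star}$-action. The moment map $F$ is proper, so every orbit $\xi\cdot x$ has a limit as $\xi\to 0$. Fix a fixed point $p_0$. The tangent space splits into weight spaces,
\[
T_{p_0}\mathfrak M_{(\zeta_{\R},0)}=\bigoplus_{m\in\Z} T_m .
\]
Because $\omega_\C$ has weight one (Theorem \ref{CSA}(iii)), $\omega_\C$ pairs $T_m$ nondegenerately with $T_{1-m}$. Hence $T^+:=\bigoplus_{m\ge 1}T_m$ is paired with $\bigoplus_{m\le 0}T_m$, and so $\dim T^+=\tfrac12\dim T_{p_0}$.

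We then invoke the Bia\l{}ynicki-Birula decomposition for smooth varieties with a $\C^{\star}$-action all of whose limits exist. This is available because $\mathfrak M_{(\zeta_{\R},0)}$ is quasi-projective as a GIT quotient $H^s/G^{\C}_{\mathbf v}$. It shows that the fibre $W^0_a(p_0)$ over $p_0$ of the attractor retraction is a locally closed smooth subvariety, $\C^{\star}$-equivariantly isomorphic to $T^+\cong\C^N$.

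The fibre is isotropic by the usual argument. For $u,v\in T_xW^0_a(p_0)$ we have $\omega_\C(u,v)=\xi^{-1}\omega_\C(\xi_*u,\xi_*v)$. As $\xi\to 0$, the vectors $\xi_*u,\xi_*v$ converge into $T^+$ at $p_0$. Since $\omega_\C$ vanishes on $T^+\times T^+$ (weights sum to at least $2\neq 1$), this forces $\omega_\C(u,v)=0$. Together with the dimension count, $W^0_a(p_0)$ is a holomorphic Lagrangian submanifold.

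\textbf{The fibre over $1$.} The total space $\mathcal M_{\zeta_{\R}}$ is smooth; this follows from genericity, which propagates to each $((1-|\xi|^2)\zeta_\R,-2i\xi\zeta_\R)$. The $\C^{\star}$-action covers the weight-one action on $\C$. The attractor $W(p_0)$ is then a $\C^{\star}$-invariant smooth subvariety of dimension $N+1$, and $\pi|_{W(p_0)}$ is equivariant.

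Over $\C^{\star}$ the action identifies all fibres, so $W(p_0)\cap\pi^{-1}(\C^{\star})\cong \C^{\star}\times W^1_a(p_0)$. The holomorphic Bia\l{}ynicki-Birula chart at $p_0$ for the total space has positive weight space $T^+\oplus\C\,\partial_\xi$. In this chart $\pi$ is a weight-one coordinate, so $W^1_a(p_0)$ is the slice $\{\xi=1\}$ of a $\C^{N+1}$ with a positive linear action. Such a slice is diffeomorphic, in fact biholomorphic via the contracting flow, to $\C^N$.

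Isotropy is proved as before. Holomorphic symplectic forms on the fibres of $\pi$ assemble into a relative form of weight one, and the weight argument applies along the orbit limits. Half-dimensionality follows because $\dim W^1_a=\dim W^0_a=N$ and $\dim\mathfrak M_{(0,-2i\zeta_\R)}=\dim\mathfrak M_{(\zeta_\R,0)}$.

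\textbf{Main obstacle.} The delicate point is the global Bia\l{}ynicki-Birula statement on the total space $\mathcal M_{\zeta_{\R}}$. It is only an analytic space built from a family of hyperk\"ahler quotients, not obviously quasi-projective. We also need $d\pi\neq 0$ along $W(p_0)$, so that $W^1_a(p_0)$ is a smooth transverse slice. We would first try to realize $\mathcal M_{\zeta_\R}$ as a single GIT quotient $\{(m,\xi):\mu_\C(m)=-2i\xi\zeta_\R\}/\!\!/_{\zeta_\R}G^{\C}_{\mathbf v}$, using the holomorphic description of Theorem \ref{CSA}(v). This gives a smooth quasi-projective variety with a $\C^{\star}$-action of positive weight on $\xi$ and proper limits by Theorem \ref{CSA}(iv). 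Failing that, we would construct the chart directly, which is precisely the r\^ole of the Hodge and BB slices $S_{p_0}^+$ introduced next: they give a concrete equivariant identification of $T^+$ with $W^0_a(p_0)$, and of $T^+\times\{1\}$ with $W^1_a(p_0)$.
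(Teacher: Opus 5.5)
Your proof is correct in outline, but it takes a different route from the paper.

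\textbf{The paper's route.} The paper does not use the algebraic Bia\l{}ynicki-Birula theorem. For $W^0_a(p_0)$ it works upstairs in $\mathbb M$, using two ingredients.
\begin{itemize}
\item The BB slice $S^+_{p_0}\subset\mathbb M^{(1)}_{\Omega}\oplus\mathbb M^{(0)}_{\overline{\Omega}}$, whose $\C^{\star}$-equivariant Kuranishi map identifies it globally with $T^+_{p_0}\mathfrak M_{(\zeta_{\R},0)}$.
\item Corollary~\ref{GG}, which identifies $S^+_{p_0}$ with $W^0_a(p_0)$ via a unique complex gauge transformation $g_A$. This comes from the inverse-function argument of Lemma~\ref{metric} for small $A$, followed by $\C^{\star}$-scaling.
\end{itemize}
Half-dimensionality and isotropy come from the isotropy of $\mathbb M^{(1)}_{\Omega}\oplus\mathbb M^{(0)}_{\overline{\Omega}}$. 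For $W^1_a(p_0)$ the paper transports all of this through the explicit parametrization $A\mapsto g_{A,\hbar}\cdot p_A$ given by the conformal limit.

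\textbf{Your route and what each buys.} You apply BB intrinsically, once to $\mathfrak M_{(\zeta_{\R},0)}$ and once to the total space over $\C$. You get the Lagrangian property from weights, and realize $W^1_a(p_0)$ as the hyperplane $\{\pi=1\}$ in an $(N+1)$-dimensional cell. This makes the $W^1$ statement independent of the conformal-limit theorem. In particular it does not rely on $\mathcal{CL}_{\hbar}$ being onto $W^1_a(p_0)$, which the paper does not argue separately. It also yields an algebraic isomorphism with $\C^N$. The paper's route instead produces explicit representatives in $\mathbb M$, which the conformal-limit formula and the invariant-polynomial argument of the Simpson section need.

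\textbf{First point to tighten: the GIT realization is required.} You list it as a fallback, but it is needed, since the paper defines $\mathcal M_{\zeta_{\R}}$ only as a union of hyperk\"ahler quotients. It does work:
\begin{itemize}
\item For $\xi\neq 0$, genericity makes every $G^{\C}_{\mathbf v}$-orbit in $\mu_{\C}^{-1}(-2i\xi\zeta_{\R})$ closed and free, hence $\zeta_{\R}$-stable. So the fibres of $\{(m,\xi):\mu_{\C}(m)=-2i\xi\zeta_{\R}\}/\!\!/_{\zeta_{\R}}G^{\C}_{\mathbf v}$ are the paper's.
\item Surjectivity of $d\mu_{\C}$ at stable points gives smoothness of the total space and surjectivity of $d\pi$.
\item Existence of limits is best obtained from the projective map to the affine quotient, rather than from properness of $F$. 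The $\C^{\star}$-action contracts that quotient because $\Omega$ has no oriented cycles.
\end{itemize}
Then $\pi|_{W(p_0)}$ is a weight-one function in positive-weight linear BB coordinates, hence a combination of the weight-one coordinates. It is nonzero because $d\pi_{p_0}$ is equivariant and maps onto a weight-one line. So the transversality you worried about is automatic.

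\textbf{Second point to tighten: isotropy.} As written, your argument only shows $\xi_*u,\xi_*v\to 0$. That is not enough after dividing by $\xi$; you need $\omega_{\C}(\xi_*u,\xi_*v)=O(|\xi|^2)$. A cleaner argument: in those linear coordinates a (relative) $2$-form of weight one must vanish. Indeed each $dx_a\wedge dx_b$ has weight at least $2$ and the coefficients have weight $\geq 0$. This treats both fibres at once.
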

The proof of the theorem for $W_a^0(B_h^0,i_k^0,B_{\bar h}^0,j_k^0)$ will be given after the introduction of the Bialynicki-Birula slice and the second part will follow from the explicit parametrization of $W_a^1(B_h^0,i_k^0,B_{\bar h}^0,j_k^0)$ by the conformal limit map.

\section{The Bia\l{}ynicki-Birula slice}

The other main ingredient in the proof of existence of the conformal limit for Higgs bundles is the existence of the Bia\l{}ynicki-Birula (BB) slice, a finite dimensional subvariety of $\mathcal H^{ps}$ diffeomorphic to $W^0_a(\bar\pa_0,\Phi_0)$. In order to define the BB slice, we first define a different slice which we call the Hodge slice and then we restrict to a $\C^{\star}$-fixed point in order to define the BB slice. 

\subsection{The Hodge slice}

In order to define the Hodge slice we first need to understand the deformation complex at a stable point of a quiver variety. To do this we have to use the definition of the quiver variety through the GIT quotient construction. Recall that this is given by 
\begin{equation*}
\mathfrak M_{\zeta} \cong H_{\zeta}^s/ G_{\mathbf{v}}^{\C}. 
\end{equation*}
Therefore, at a stable point $p = (B_h,i_k,B_{\bar h},j_k)$ the deformation complex is given by 
\begin{equation}
\mathfrak g_{\mathbf{v}}\otimes \C \xlongrightarrow{l_p} \mathbb M \xlongrightarrow{ \,d\mu_{\C}(p,\star)} \mathfrak g_{\mathbf{v}}\otimes \C. 
\end{equation}
Here, $l_p$ is the linearization of the gauge group action 
\begin{equation}\label{gauge group}
    g\cdot(B_h,i_k,B_{\bar h},j_k)=(g_{in(h)}B_h g_{out(h)}^{-1},g_k i_kg_{out(h)B_{\bar h}}g_{in(h)}^{-1},j_kg_k^{-1}),
\end{equation}
and writing $g_k=e^{t\xi_k}$, it takes the form:
\begin{equation}
    l_p(\xi):=\frac{\,d}{\,dt}\Bigg|_{t=0} e^{t\xi}\cdot p = (\xi_{\text{in}(h)}B_h - B_h\xi_{\text{out}(h)}, \xi_ki_k, \xi_{\text{out}(h)}B_{\bar h} - B_{\bar h}\xi_{\text{in}(h)}, -j_k\xi_k).
\end{equation}
In order to obtain a representation of the tangent space at $[p]$ it is most convenient to work with the adjoint $l_p^{\star}$ of $l_p$. Since the explicit form of the adjoint will be essential later on we give a careful derivation of its particular form. The pairing of $l_p(\xi)$ with an element $q=(A_h,I_k,A_{\bar h},J_k)$ of the quiver representation space $\mathbb{M}$ is given by:
\begin{equation}\label{step1}
\langle l_p(\xi),q\rangle = \Tr\left( \sum\limits_{h\in \Omega} ((\xi_{\text{in}(h)}B_h - B_h\xi_{\text{out}(h)})A^{\dagger}_h+ (\xi_{\text{out}(h)}B_{\bar h} - B_{\bar h}\xi_{\text{in}(h)})A^{\dagger}_{\bar h}) +\sum\limits_k \xi_ki_kI_k^{\dagger} - J_k^{\dagger}j_k\xi_k \right).
\end{equation}
To calculate the adjoint $l_p^*(\xi)$ we expand both sides of the definition
\begin{equation}\label{adjoint}
    \langle l_p(\xi),q\rangle=\langle\xi,l_p^*(q)\rangle.
\end{equation}
Using equation \eqref{step1} we have:
\begin{equation}\label{step2}
    \begin{split}
        \langle l_p(\xi),q\rangle =& \Tr(\sum\limits_{h\in \Omega} (\xi_{in(h)}B_h-B_h\xi_{out(h)})A_h^\d )+ \\
        +& \Tr(\sum\limits_{h\in \Omega} (\xi_{out(h)} B_{\hb}-B_{\hb}\xi_{in(h)})A_{\hb})+\Tr(\sum\limits_{k}\xi_k i_kI_k^\d-J_k^\d j_k\xi_k)= \\
        =& \sum\limits_{h\in \Omega} [\Tr(\xi_{in(h)}B_hA_h^\d)-\Tr(B_h\xi_{out(h)}A_h^\d)+\Tr(\xi_{out(h)}B_{\hb}A_{\hb}^\d )-\\
        -& \Tr(B_{\hb}\xi_{in(h)} A_{\hb}^\d)]+\Tr(\sum\limits_{k}\xi_ki_kI_k^\d-J_k^\d j_k\xi_k).
    \end{split}
\end{equation}
Observing that 
\begin{equation*}
    \begin{split}
        \Tr(B_h\xi_{out(h)}A_h^\d)&=\Tr(\xi_{out(h)}A_h^\d B_h) \\
        \Tr(B_{\hb}\xi_{in(h)}A_{\hb}^\d)&=\Tr(\xi_{in(h)}A_{\hb}^\d B_{\hb})\\
        \Tr(J_k^{\dagger}j_k\xi_k) &= \Tr(\xi_kJ_k^{\dagger}j_k),
    \end{split}
\end{equation*}
 we can rewrite equation \eqref{step2} as:
\begin{equation*}
    \begin{split}
        \langle l_p(\xi),q \rangle =& \sum\limits_{h\in \Omega} \left(\Tr(\xi_{in(h)}B_hA_h^\d)-\Tr(\xi_{out(h)}A_h^\d B_h)\right)+\sum\limits_{h\in \Omega} \left(\Tr(\xi_{out(h)}B_{\hb}A_{\hb}^\d )- \Tr(\xi_{in(h)} A_{\hb}^\d B_{\hb})\right)+ \\
        +& \Tr\left(\sum\limits_{k}\xi_k(i_kI_k^\d-J_k^\d j_k)\right)= \\
        =& \sum\limits_{\substack{\{k=in(h)\\ h \in \Omega\}}} \Tr(\xi_k(B_hA_h^\d -A_{\hb}^\d B_{\hb }))+\sum\limits_{\substack{\{k=out(h)\\ h \in \Omega\}}} \Tr(\xi_k(B_{\hb}A_{\hb}^\d -A_h^\d B_h))+\\ 
         +& \Tr\left(\sum\limits_{k}\xi_k(i_kI_k^\d-J_k^\d j_k)\right)=\\
        =&\sum\limits_{k}   \Tr\left(\xi_k \left(\sum\limits_{\substack{\{k=in(h)\\ h \in \Omega\}}}B_h A_h^\d -A_{\hb}^\d B_{\hb}+\sum\limits_{\substack{\{k=out(h)\\ h \in \Omega\}}}B_{\hb}A_{\hb}^\d -A_h^\d B_h+i_k I_k^\d-J_k^\d j_k\right)\right) . 
        \end{split}
\end{equation*}
Define 
\begin{equation*}
M_k:=\sum\limits_{\substack{k=in(h)\\ h \in \Omega}}B_h A_h^\d -A_{\hb}^\d B_{\hb}+\sum\limits_{\substack{k=out(h)\\ h \in \Omega}}B_{\hb}A_{\hb}^\d -A_h^\d B_h+i_k I_k^\d-J_k^\d j_k.
\end{equation*}
Using this,  \eqref{adjoint} can be rewritten as:
\begin{equation*}
    \begin{split}
       \sum \limits_{k} \Tr\xi_kM_k &=\sum \limits_{k}\Tr \xi_k(l_p^*(q))_k^\d \Rightarrow \\
        M_k &=(l_p^*(q))_k^\d \Rightarrow \\
        l_p^*(q)=M_k^\d &=\sum\limits_{\substack{k=in(h)\\ h \in \Omega}}A_hB_h^\d-B_{\hb}A_{\hb}+\sum\limits_{\substack{k=out(h)\\ h \in \Omega}}A_{\hb}B_{\hb}^\d
        -B_h^\d A_h+I_ki_k^\d -j_k^\d J_k  =  \\
        &=\sum\limits_{\substack{k=in(h)\\ h \in \Omega}} A_hB_h^\d-B_{\hb}A_{\hb}+\sum\limits_{\substack{k=in(h)\\ h \in \bar \Omega}} A_hB_h^\d -B_{\hb}^\d A_{\hb}+I_ki_k^\d -j_k^\d J_k= \\
        &=\sum \limits_{\substack{h \in H \\ k=in(h)}}A_h B_h^\d-B_{\hb}^\d A_{\hb}+I_ki_k^\d -j_k^\d J_k.
\end{split}
\end{equation*}
We are now in position to define the tangent space of $\mathfrak M_{(\zeta_{\R},0)}$ at $[p]$. It is given by 
\begin{equation}
T_{[p]}\mathfrak M_{(\zeta_{R},0)} = \{ q\in \mathbb M~|~ \,d\mu_{\C}(p,q) =0 \text{ and } l^{\star}_p(q) =0\}. 
\end{equation}
Following the same intuition as in \cite{CW} we define the \emph{Hodge slice} by changing the linearized equation $\,d\mu_{\C}(p,q) =0$ with the original quadratic in $q$ complex moment map $\mu_{\C}(p+q) = 0$:
\begin{definition}
    Given  $p=(B_h,i_k,B_{\bar h },j_k)\in \mathfrak M_{(\zeta_{\R},0)}$ the Hodge slice for Nakajima quiver varieties is given by:
    \begin{equation}
        S_p=\{q=(A_h,I_k,A_{\bar h},J_k) \in \mathbb M~|~ \mu_\C(p+q)=0, l_p ^*(q)=0\}.
    \end{equation}
\end{definition}
\begin{proposition}\label{HLH}
The map $P$ defined by  $q \to [p+q]$ is a homeomorphism from a neighborhood of the origin in $S_p$ to a neighborhood of $[p]\in \mathfrak M_{(\zeta_{\R},0)}$. 
\end{proposition}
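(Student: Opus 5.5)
Throughout, $p$ lies in $\mu_{\R}^{-1}(\zeta_{\R})\cap\mu_{\C}^{-1}(0)$ with $\zeta=(\zeta_{\R},0)$ generic. The plan is to realise $S_p$ as a local slice for the $G^{\C}_{\mathbf v}$-action on the stable locus $H^s_\zeta\subset\mu_{\C}^{-1}(0)$, and then use the homeomorphism $\mathfrak M_{(\zeta_{\R},0)}\cong H^s_\zeta/G^{\C}_{\mathbf v}$ (Theorem 3.2 of \cite{Na1994}).

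By Theorem \ref{generic}, $p$ has trivial stabilizer in $G_{\mathbf v}$. Since $p\in\mu_{\R}^{-1}(\zeta_{\R})$, its $G^{\C}_{\mathbf v}$-stabilizer is the complexification of a compact group, so it is trivial as well. Hence $l_p$ is injective. Moreover $d\mu_{\C}(p,\cdot)$ is surjective, because its adjoint is essentially $l_p$ composed with the complex structure $J$.

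\textbf{Step 1: $\mu_{\C}^{-1}(0)$ is smooth near $p$.} By surjectivity of $d\mu_{\C}(p,\cdot)$ and the implicit function theorem, $\mu_{\C}^{-1}(0)$ is a complex submanifold near $p$ with tangent space $\ker d\mu_{\C}(p,\cdot)$. This tangent space contains $\operatorname{im} l_p$, because $\mu_{\C}$ is equivariant and $\mu_{\C}(p)=0$ is central.

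\textbf{Step 2: $S_p$ is a manifold transverse to the orbits.} Consider $\Phi(q)=(\mu_{\C}(p+q),l_p^{\star}(q))$. Its differential at $0$ is $(d\mu_{\C}(p,\cdot),l_p^{\star})$. The kernel of this differential is the harmonic space $T_{[p]}$. Surjectivity onto $\mathfrak g^{\C}_{\mathbf v}\oplus\operatorname{im}l_p^{\star}$ follows from the orthogonal decomposition $\mathbb M=\operatorname{im}l_p\oplus T_{[p]}\oplus\operatorname{im}(d\mu_{\C})^{\star}$. Here $\operatorname{im}(d\mu_{\C})^\star\perp\ker d\mu_{\C}\supseteq \operatorname{im}l_p$, and $l_p^{\star}$ is an isomorphism on $\operatorname{im} l_p$. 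So near $0$, $S_p$ is a complex submanifold with $T_0S_p=T_{[p]}$, of dimension $\dim\mathfrak M$.

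\textbf{Step 3: local diffeomorphism.} Define
\[
m:G^{\C}_{\mathbf v}\times S_p\to\mu_{\C}^{-1}(0),\qquad m(g,q)=g\cdot(p+q).
\]
Its differential at $(1,0)$ is $(\xi,q)\mapsto l_p(\xi)+q$. This is injective, since $\operatorname{im}l_p\cap T_{[p]}=0$ (because $l_p^\star$ is injective on $\operatorname{im}l_p$). Dimensions match by Step 1. The inverse function theorem then gives neighbourhoods $U\ni 1$ and $V\ni 0$ such that $m:U\times V\to\mu_{\C}^{-1}(0)$ is a diffeomorphism onto an open set $\mathcal O\ni p$. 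Since stability is an open condition and $p$ is stable, we may shrink $\mathcal O$ into $H^s_\zeta$. Then $P:V\to H^s_\zeta/G^{\C}_{\mathbf v}$ is continuous and open, because the projection $\mathcal O\to\mathcal O/U$ is open.

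\textbf{Step 4: injectivity of $P$, the main obstacle.} Local transversality alone does not exclude two points $q,q'\in V$ being related by some $g$ far from $U$. I would argue by contradiction. Suppose $q_n,q_n'\to0$ and $g_n\cdot(p+q_n)=p+q_n'$ with $g_n\notin U$. Because $G^{\C}_{\mathbf v}$ acts freely and properly on $H^s_\zeta$, the $g_n$ stay in a compact set, so a subsequence converges to some $g$ with $g\cdot p=p$. Properness here follows from the Kempf–Ness picture: the quotient is Hausdorff and the stabilizers are trivial, via Theorem 3.2 of \cite{Na1994}. Triviality of the stabilizer forces $g=1$, so $g_n\in U$ for large $n$, a contradiction. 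Hence $P$ is injective after shrinking $V$.

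A continuous, open injection is a homeomorphism onto its image, which proves the Proposition. The delicate point is properness of the action near $p$. I would try to derive it from the Hausdorffness of $H^s_\zeta/G^{\C}_{\mathbf v}$ combined with the free action, or alternatively by working on $\mu_{\R}^{-1}(\zeta_{\R})$ with the compact group $G_{\mathbf v}$, where properness is automatic.
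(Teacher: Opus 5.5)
Your argument is correct and follows a genuinely different route from the paper's. It also proves the statement as written more completely.

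\textbf{The paper's route.} The paper's proof has two steps. First, the orthogonal projection $q\mapsto q+(d\mu_{\C})^{\star}G(-d\mu_{\C}(p,q))$ onto $\ker d\mu_{\C}(p,\cdot)$ identifies $S_p$ near $0$ with the harmonic space $T_{[p]}\mathfrak M_{(\zeta_{\R},0)}$; this is your Step 2 in different clothing. Second, it composes with the exponential map of $\mathfrak M_{(\zeta_{\R},0)}$ at $[p]$. That shows $S_p$ is, near $0$, a manifold diffeomorphic to a neighbourhood of $[p]$, using nothing global about the group action. However, the map produced is $\exp_{[p]}\circ\,\mathrm{proj}$, not $q\mapsto[p+q]$. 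The paper also never asks whether two nearby slice points can be complex gauge equivalent.

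\textbf{Your route.} Your Steps 3 and 4 replace the exponential map by an honest holomorphic slice argument. The local diffeomorphism $G^{\C}_{\mathbf v}\times S_p\to\mu_{\C}^{-1}(0)$ gives openness of $P$, and injectivity is reduced to properness of the action near $p$. The local product structure you obtain is also what underlies later uniqueness claims such as Corollary~\ref{GG}. The price is exactly the global input you identify.

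\textbf{The properness step.} Your first justification does not suffice. For a free action, Hausdorffness of the orbit space only forces the limit of $g_nx_n$ to lie in the orbit of $\lim x_n$. It does not exclude $g_n\to\infty$ with $x_n\to p$ and $g_nx_n\to p$, which is precisely the case you must rule out.

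Your second suggestion works with tools already in the paper:
\begin{enumerate}[label=(\roman*)]
\item Apply the inverse function theorem of Section~\ref{IFT} to $(q,\xi)\mapsto -2i\mu_{\R}(e^{\xi}\cdot(p+q))$ at $(0,0)$. This gives $\xi_n,\xi_n'\in i\mathfrak g_{\mathbf v}$ tending to $0$ with $x_n:=e^{\xi_n}\cdot(p+q_n)$ and $y_n:=e^{\xi_n'}\cdot(p+q_n')$ in $\mu_{\R}^{-1}(\zeta_{\R})\cap\mu_{\C}^{-1}(0)$.
\item Set $k_n:=e^{\xi_n'}g_ne^{-\xi_n}$, so $k_n\cdot x_n=y_n$. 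Write $k_n=u_ne^{\eta_n}$ with $u_n\in G_{\mathbf v}$ and $\eta_n\in i\mathfrak g_{\mathbf v}$.
\item The function $t\mapsto\langle -2i\mu_{\R}(e^{t\eta_n}\cdot x_n),\eta_n\rangle$ has derivative a positive multiple of $\|l_{e^{t\eta_n}\cdot x_n}(\eta_n)\|^2$, by the identity $\langle\mathcal L(q,\eta),\eta\rangle=\|l_q(\eta)\|^2$ of Section~\ref{IFT}. It takes the same value $\langle -2i\zeta_{\R},\eta_n\rangle$ at $t=0$ and $t=1$, since $\zeta_{\R}$ is central.
\item Hence $l_{x_n}(\eta_n)=0$, and freeness gives $\eta_n=0$, so $k_n\in G_{\mathbf v}$.
\item By compactness a subsequence $k_n\to k$ with $k\cdot p=p$, so $k=1$. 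Then $g_n=e^{-\xi_n'}k_ne^{\xi_n}\to 1$, contradicting $g_n\notin U$.
\end{enumerate}
Alternatively, you can simply cite the standard GIT fact that $G^{\C}_{\mathbf v}$ acts properly on the stable locus $H^s_{\zeta}$.
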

\begin{proof}
Consider the following map 
\begin{equation*}
P:S_p \to T_{[p]}\mathfrak M_{(\zeta_{R},0)}
\end{equation*}
given by 
\begin{equation*}
P(q ) = q + (\,d\mu_{\C})^{\star}G(-\,d\mu_{\C}(p,q))
\end{equation*}
where $G$ is the inverse of $\,d\mu_{\C}(p, *)\circ (\,d\mu_{\C}(p,*))^{\star}$ defined on the image $\mathrm{Im}(\,d\mu_{\C}(p,*))$. It is rather straightforward to show that it is invertible and that $\,d\mu(p,P(q)) = 0$. Therefore this map is a diffeomorphism from $S_p$ to $T_{[p]}\mathfrak M_{(\zeta_{R},0)}$ and composing it with the exponential map gives the required local diffeomorphism. 
\end{proof}

\subsection{Bia\l{}ynicki-Birula slice}
Finally, we are ready to define the Bia\l{}ynicki-Birula slice. Let $p_0=(B_h^0,i_k^0,B_{\bar h}^0,j_k^0)$ be an element of the quiver representation space $\mathbb M$, which is a representative of a $\C^*$-fixed point. This in particular implies that for all $t\in S^1$ there exists a gauge transformation $g_t\in G_{\mathbf{v}}$ such that 
\begin{equation}
g_t\cdot (B_h^0,i_k^0,tB_{\bar h}^0,tj_k^0) = (B_h^0,i_k^0,B_{\bar h}^0,j_k^0).
\end{equation}
Clearly $g_t$ depends on the fixed point but we choose to suppress this dependence in the notation. The family $g_t$ acts on the vector spaces $V_k$ and decomposes them into direct sums of eigenspaces
\begin{equation}
V_k = \bigoplus\limits_{m} V_k^m.
\end{equation}
This decomposition induces decompositions of $\mathbb{M}_{\Omega}$ and $\mathbb M_{\overline{\Omega}}$:
\begin{equation*}
    \mathbb{M}=\mathbb{M}_{\Omega}\oplus \mathbb{M}_{\overline{\Omega}} = \bigoplus\limits_{i=-a}^{b+1}\mathbb{M}_{\Omega}^i\oplus\bigoplus\limits_{j = -b}^{a+1}\mathbb{M}_{\overline{\Omega}}^j
\end{equation*}
for some positive integers $a,b$ where $S^1$ acts for example on $\mathbb{M}_{\Omega}^i$ as multiplication by $t^i$. The particular summation endpoints come from Theorem $3.4$ $(\text{iii})$ \ref{CSA} that the symplectic form transforms as  $\omega_{\C}\to t\omega_{\C}$ under the $S^1$-action and because the symplectic form is non-degenerate. $g_t$ also acts on $\mathfrak g_{\mathbf{v}}\otimes\C$ by conjugation giving a similar decomposition as above:
\begin{equation*}
\mathfrak g_{\mathbf{v}}\otimes\C = \bigoplus\limits_{-c}^c (\mathfrak g_{\mathbf{v}}\otimes\C)^c.
\end{equation*}
For all graded objects $G = \bigoplus\limits_{m} G^m$ let us fix the notation $ G^{(d)} := \bigoplus\limits_{d\le i} G^i$. Observe also that $p_0 \in \mathbb M_{\Omega}^0\oplus \mathbb M_{\overline{\Omega}}^{-1}$. Therefore for $p_0$ we get a refinement for the Hodge slice coming from the following diagram
\begin{equation}
(\mathfrak g_{\mathbf{v}}\otimes\C)^{(1)}\xlongrightarrow{l_{p_0}(*)}  \mathbb M_{\Omega}^{(1)}\oplus \mathbb M_{\overline{\Omega}}^{(0)}\xlongrightarrow{\mu_{\C}(P+*)}(\mathfrak g_{\mathbf{v}}\otimes\C)^{(0)}.
\end{equation}
\begin{definition}
We define the Bia\l{}ynicki-Birula slice 
\begin{equation}
S^+_{p_0}:= \{ q= (A_h,I_k,A_{\bar h},J_k)\in \mathbb M_{\Omega}^{(1)}\oplus \mathbb M_{\overline{\Omega}}^{(0)}~|~\mu_{\C}(p_0+q) =0 \text{ and } l^{\star}_{p_0}(q) = 0\}.
\end{equation}
\end{definition}
\begin{proposition}
$S^+_{p_0}$ is half the dimension of $\mathfrak M_{(\zeta_{\R},0)}$. 
\end{proposition}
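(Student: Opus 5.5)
We will show that $S^+_{p_0}$ is, near the origin, a smooth submanifold whose tangent space at $0$ is the positive-weight part of the tangent space $T_{[p_0]}\mathfrak M_{(\zeta_{\R},0)}$. We then show that this positive part is exactly half-dimensional. The argument is the quiver analogue of the weight count in \cite{CW}, but it is finite dimensional, so no elliptic theory is needed.

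\emph{Step 1: grading of the tangent space.} The $S^1$-action at $p_0$ is implemented by $g_t$, which is unitary because it lies in $G_{\mathbf v}$. Twisting the action on $\mathbb M$ by $g_t$ (so that $\mathbb M^i_\Omega$ has weight $i$ and $\mathbb M^j_{\overline\Omega}$ has weight $j+1$) makes the deformation complex
\begin{equation*}
\mathfrak g_{\mathbf v}\otimes\C\xrightarrow{l_{p_0}}\mathbb M\xrightarrow{d\mu_{\C}(p_0,\cdot)}\mathfrak g_{\mathbf v}\otimes\C
\end{equation*}
equivariant, the last term carrying the weight shift by one coming from $\omega_\C\mapsto t\omega_\C$ (Theorem \ref{CSA} (iii)). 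Since $g_t$ is unitary, the weight spaces are orthogonal, and $l^\star_{p_0}$ is also weight preserving. Hence $T_{[p_0]}\mathfrak M_{(\zeta_\R,0)}=\ker d\mu_\C(p_0,\cdot)\cap\ker l^\star_{p_0}=\bigoplus_m T^m$ splits by weight. The linearization of the conditions defining $S^+_{p_0}$ at $q=0$ is exactly $T^+:=\bigoplus_{m\ge1}T^m$, because the domain $\mathbb M^{(1)}_\Omega\oplus\mathbb M^{(0)}_{\overline\Omega}$ is precisely the part of $\mathbb M$ of twisted weight $\ge1$.

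\emph{Step 2: smoothness.} Genericity of $\zeta$ (Theorem \ref{generic}) gives a trivial stabilizer at $p_0$. Therefore $l_{p_0}$ is injective and $d\mu_\C(p_0,\cdot)$ is surjective, the latter being dual to injectivity of the complexified infinitesimal action. Both maps preserve weights, so they remain injective and surjective when restricted to the positive-weight pieces. The map
\begin{equation*}
q\mapsto\bigl(\mu_\C(p_0+q),\,l^\star_{p_0}(q)\bigr)
\end{equation*}
from $\mathbb M^{(1)}_\Omega\oplus\mathbb M^{(0)}_{\overline\Omega}$ to $(\mathfrak g_{\mathbf v}\otimes\C)^{(\ge1)}\oplus(\mathfrak g_{\mathbf v}\otimes\C)^{(\ge1)}$ is therefore a submersion at $0$. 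By the implicit function theorem, $S^+_{p_0}$ is a complex submanifold near $0$ of dimension $\dim T^+$.

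One point must be checked here: the target of $\mu_\C(p_0+q)$ lies in the weight $\ge1$ part. This holds because $\mu_\C$ is homogeneous of weight one, $\mu_\C(p_0)=0$, and $q$ has only weights $\ge1$. Globally we note that $S^+_{p_0}$ is invariant under the $\C^\star$-action $q\mapsto t^{\mathrm{wt}}q$, which contracts everything to $0$. Every point therefore flows into the neighborhood where the manifold description holds, so the dimension is constant and $S^+_{p_0}$ is smooth everywhere.

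\emph{Step 3: the weight count, which is the main step.} We must show $\dim T^+=\tfrac12\dim T$. The form $\omega_\C$ on $T$ has weight one, so it pairs $T^m$ nondegenerately with $T^{1-m}$. This gives $\dim T^m=\dim T^{1-m}$, so $T^{\ge1}$ and $T^{\le0}$ have equal dimension, and each is therefore $\tfrac12\dim T$.

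The subtle point is that the weight-$0$ and weight-$1$ pieces must pair correctly, with no self-paired weight. This is automatic because $1-m\neq m$ for integers $m$. Nondegeneracy of $\omega_\C$ restricted to the harmonic representative space $\ker d\mu_\C\cap\ker l^\star$ needs a short argument: this space is identified with $\ker d\mu_\C/\mathrm{Im}\,l_{p_0}$, and $\mathrm{Im}\,l_{p_0}$ is exactly the $\omega_\C$-annihilator of $\ker d\mu_\C$ by the moment map property.

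Finally, half of $\dim_\C\mathfrak M=\dim_\C T$ equals $\dim_\C S^+_{p_0}$, which proves the proposition. The main obstacle we expect is a careful bookkeeping of the weight shift between the $\Omega$ and $\overline\Omega$ parts, to confirm that the slice domain is exactly the weight $\ge1$ part.
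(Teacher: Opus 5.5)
Your proof is correct. Its first half matches the paper in substance. The paper also identifies the linearization of $S^+_{p_0}$ at $0$ with the weight $\ge 1$ part $T^+$ of the harmonic tangent space and globalizes by $\C^\star$-equivariance. The difference there is only the tool: the paper uses the Kuranishi map $S^+_{p_0}\to T^+_{p_0}\mathfrak M_{(\zeta_\R,0)}$, while you use the implicit function theorem plus contraction.

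The genuine difference is the dimension count. The paper counts on the graded complex:
$\dim T^+=\dim(\mathbb M^{(1)}_\Omega\oplus\mathbb M^{(0)}_{\overline\Omega})-\dim(\mathfrak g_{\mathbf v}\otimes\C)^{(1)}-\dim(\mathfrak g_{\mathbf v}\otimes\C)^{(0)}$.
The first space is Lagrangian in $\mathbb M$, so it has half the dimension of $\mathbb M$. The two graded pieces of $\mathfrak g_{\mathbf v}\otimes\C$ have total dimension $\dim(\mathfrak g_{\mathbf v}\otimes\C)$ by the duality between $g_t$-weights $m$ and $-m$. Together these give $\tfrac12\dim\mathbb M-\dim(\mathfrak g_{\mathbf v}\otimes\C)=\tfrac12\dim\mathfrak M_{(\zeta_\R,0)}$.

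You instead count on $T$ itself, through the weight-one pairing $T^m\times T^{1-m}\to\C$. Your version is more intrinsic. Computing $\dim T^+$ needs only nondegeneracy of $\omega_\C|_T$, which you justify by coisotropic reduction, and not surjectivity of the graded maps (you still need surjectivity for smoothness). It also shows directly that $T^+$ is Lagrangian in $T$, which is what the later holomorphic Lagrangian statement for $W^0_a(p_0)$ really uses. The paper's version makes the number of defining equations explicit and yields the global identification of $S^+_{p_0}$ with the vector space $T^+$.

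One step of your Step 2 needs an extra line, as does the corresponding step in the paper. Surjectivity of $d\mu_\C(p_0,\cdot)$ and of $l^\star_{p_0}$ separately does not give surjectivity of the pair. To close this, use two facts:
\begin{itemize}
\item $l_{p_0}$ maps $(\mathfrak g_{\mathbf v}\otimes\C)^{(1)}$ into $\ker d\mu_\C(p_0,\cdot)$, because $\mu_\C(p_0)=0$;
\item $l^\star_{p_0}l_{p_0}$ is invertible.
\end{itemize}
Then $l^\star_{p_0}$ already maps $\ker d\mu_\C(p_0,\cdot)\cap(\mathbb M^{(1)}_\Omega\oplus\mathbb M^{(0)}_{\overline\Omega})$ onto $(\mathfrak g_{\mathbf v}\otimes\C)^{(1)}$. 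Equivalently, you can use the identification $T\cong\ker d\mu_\C/\mathrm{Im}\,l_{p_0}$ from your Step 3.

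Likewise, injectivity of $l_{p_0}$ on $\mathfrak g_{\mathbf v}\otimes\C$, rather than just on $\mathfrak g_{\mathbf v}$, uses that $\mu_\R(p_0)=\zeta_\R$ is central. Finally, you write the target of your $\mu_\C$-component as $(\mathfrak g_{\mathbf v}\otimes\C)^{(\ge1)}$ in the shifted grading; this is the paper's $(\mathfrak g_{\mathbf v}\otimes\C)^{(0)}$.
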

\begin{proof}
This is a combination of the following observations. First, the Kuranishi map of Proposition \ref{HLH} restricted to $S^+_{p_0}$ provides an in fact global homeomorphism to 
\begin{equation}
T_{p_0}^+\mathfrak M_{(\zeta_{\R},0)} := \{ q= (A_h,I_k,A_{\bar h},J_k)\in \mathbb M_{\Omega}^{(1)}\oplus \mathbb M_{\overline{\Omega}}^{(0)}~|~\,d\mu_{\C}(p_0,q) =0 \text{ and } l^{\star}_{p_0}(q) = 0\}.
\end{equation}
The local part is trivial and the global follows from the fact that both spaces scale in the same way when acted by the $\C^{\star}$-action and the Kuranishi map is $\C^{\star}$-equivariant.  Second, stability of the point $p_0$ implies that the maps $\,d\mu_{\C}(p_0,q)$ and $l^{\star}_{p_0}(*)$ are surjective. For the second one this can be checked using the explicit formula we derived in the previous subsection and the first is similar. Third, by duality, $\dim((\mathfrak g_{\mathbf{v}}\otimes\C)^{(1)}) + \dim((\mathfrak g_{\mathbf{v}}\otimes\C)^{(0)}) = \dim(\mathfrak g_{\mathbf{v}}\otimes\C) = 2\dim G_{\mathbf{v}}$. Fourth, the dimension of $\mathbb M_{\Omega}^{(1)}\oplus \mathbb M_{\overline{\Omega}}^{(0)}$ is half that of $\mathbb M$. This follows since the former space is isotropic with respect to $\omega_{\C}$ and adding anything to it will by duality pair non-trivially with an element in the space.
\end{proof}
We close this subsection with a lemma explaining exactly how the $\C^{\star}$-action acts on the BB slice. This will be useful later on. 
\begin{lemma}
    Assume $(A_h,I_k,A_{\bar h},J_k) \in S_{p_0}^+$. Then $A_R:=g_R\cdot(A_h,I_k,RA_{\bar h},RJ_k) \in S_{p_0}^+.$
\end{lemma}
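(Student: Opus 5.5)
The plan is to check directly that $A_R$ satisfies the three defining conditions of $S^+_{p_0}$: it lies in $\mathbb M_{\Omega}^{(1)}\oplus\mathbb M_{\overline\Omega}^{(0)}$, it satisfies $\mu_{\C}(p_0+A_R)=0$, and it satisfies $l^{\star}_{p_0}(A_R)=0$. Here $g_R$ denotes the complexification of the family $g_t$, so that $g_R\cdot(B_h^0,i_k^0,RB_{\bar h}^0,Rj_k^0)=p_0$ for $R\in\C^{\star}$. Since the $S^1$-family is a one-parameter subgroup acting diagonally on the eigenspaces $V_k^m$, we can take $g_R$ acting on $V_k^m$ by a fixed power of $R$. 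Write $\rho_R(B_h,i_k,B_{\bar h},j_k):=(B_h,i_k,RB_{\bar h},Rj_k)$. Then $A_R=g_R\rho_R(q)$, and the combined operator $g_R\circ\rho_R$ acts on the graded piece $\mathbb M_{\Omega}^i$ by $R^i$ and on $\mathbb M_{\overline\Omega}^j$ by $R^{j+1}$. This is exactly the grading convention in which $p_0\in\mathbb M_\Omega^0\oplus\mathbb M_{\overline\Omega}^{-1}$ is fixed.

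\textbf{Step 1 (grading).} Since $g_R\rho_R$ acts by scalars on each graded piece, it preserves each $\mathbb M_\Omega^i$ and $\mathbb M_{\overline\Omega}^j$. Hence $A_R\in\mathbb M_{\Omega}^{(1)}\oplus\mathbb M_{\overline\Omega}^{(0)}$ whenever $q$ is.

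\textbf{Step 2 (complex moment map).} By linearity, $p_0+A_R=g_R\rho_R(p_0+q)$, because $g_R\rho_R(p_0)=p_0$. The complex moment map is $G_{\mathbf v}^{\C}$-equivariant, $\mu_\C(g\cdot x)=g\,\mu_\C(x)\,g^{-1}$, and $\mu_\C(\rho_R x)=R\,\mu_\C(x)$, since every term of $\mu_\C$ contains exactly one entry from $\mathbb M_{\overline\Omega}$. Therefore
\[
\mu_\C(p_0+A_R)=R\,g_R\,\mu_\C(p_0+q)\,g_R^{-1}=0.
\]

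\textbf{Step 3 (the condition $l^\star_{p_0}(A_R)=0$).} This is the only step needing care, because $g_R$ is not unitary when $|R|\neq1$ and $\rho_R$ is not an isometry, so $l^\star_{p_0}$ is not obviously equivariant. The approach is to use the grading instead of unitarity. Using the explicit formula
\[
l^\star_{p_0}(q)_k=\sum_{k=\mathrm{in}(h)}A_h(B_h^0)^\dagger-(B^0_{\bar h})^\dagger A_{\bar h}+I_k(i_k^0)^\dagger-(j_k^0)^\dagger J_k,
\]
decompose $q=\sum q^{(i)}$ into homogeneous pieces for the combined weight, where $g_R\rho_R$ acts on $q^{(i)}$ by $R^{i}$. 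Each component of $p_0$ lies in weight $0$ for this combined action. Taking adjoints with respect to the fixed Hermitian metric, which is compatible with the unitary $S^1$-action and hence with the grading, shows that $l^\star_{p_0}$ maps the weight-$i$ piece of $\mathbb M$ into the weight-$i$ piece of $\mathfrak g_{\mathbf v}\otimes\C$ under conjugation by $g_t$. The weight-$i$ piece of $\mathfrak g_{\mathbf v}\otimes\C$ is the piece on which $\mathrm{Ad}(g_R)$ acts by $R^i$. Consequently $l^\star_{p_0}(g_R\rho_R q)=\mathrm{Ad}$-rescaling of $l^\star_{p_0}(q)$ weight by weight: on each weight-$i$ component it is multiplication by $R^i$. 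In particular, $l^\star_{p_0}(q)=0$ implies $l^\star_{p_0}(A_R)=0$.

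The obstacle I expect is bookkeeping the weight shift between $\mathbb M_\Omega$ and $\mathbb M_{\overline\Omega}$. One must verify that the $R$ coming from $\rho_R$ on the $A_{\bar h},J_k$ terms exactly cancels against the $-1$ weight of $B^0_{\bar h},j^0_k$. Concretely, one checks that each term, such as $(B^0_{\bar h})^\dagger A_{\bar h}$, is a map between eigenspaces with total weight equal to the weight of the $\mathfrak g_{\mathbf v}\otimes\C$ component it lands in. I would verify this term by term for the four types of summands.
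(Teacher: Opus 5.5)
Your proposal is correct and is essentially the paper's proof. Step 2 is the paper's complex moment map argument. Your weight bookkeeping in Step 3 is a graded repackaging of the paper's four identities, which come from taking adjoints of the fixed-point relations for $p_0$. Those identities let the paper pull $g_R$ through $(B_h^0)^\dagger$, $(B_{\bar h}^0)^\dagger$, $(i_k^0)^\dagger$, $(j_k^0)^\dagger$ and conclude $l^{\star}_{p_0}(A_R)=g_R\,l^{\star}_{p_0}(A)\,g_R^{-1}$. The term-by-term check you defer does go through, because the eigenspaces $V_k^m$ are orthogonal and so adjoints reverse weights.

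Your Step 1, checking that $A_R$ stays in $\mathbb M_{\Omega}^{(1)}\oplus\mathbb M_{\overline\Omega}^{(0)}$, is a point the paper leaves implicit. Your graded formulation also works for all $R\in\C^{\star}$ without relying on $g_R$ being self-adjoint.
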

\begin{proof}
   We start from the complex moment map. Recall that $g_R\cdot (B_h^0,i_k^0,RB_{\bar h}^0,Rj_k^0)=(B_h^0,i_k^0,B_{\bar h}^0,j_k^0).$ Therefore,
   \begin{equation*}
       \begin{split}
           p_0+A_R&=(B_h^0,i_k^0,B_{\bar h}^0,j_k^0)+g_R\cdot (A_h,I_k,RA_{\bar h},RJ_k)= \\
           &=g_R\cdot (B_h^0,i_k^0,RB_{\bar h}^0,Rj_k^0)+g_R\cdot (A_h,I_k,RA_{\bar h},RJ_k)= \\
           &=g_R\cdot (B_h^0+A_h,i_k^0+I_k,R(B_{\bar h}^0+A_{\bar h}),R(j_k^0+J_k)).
       \end{split}
   \end{equation*}
   Since the complex moment map is invariant under complex gauge transformation and equivariant with respect to the $\C^{\star}$-action, we have that:
   \begin{equation*}
       \begin{split}
           \mu_\C(p_0+A_R)&=\mu_\C(g_R\cdot (B_h^0+A_h,i_k^0+I_k,R(B_{\bar h}^0+A_{\bar h}),R(j_k^0+J_k)))=\\
           &=\mu_\C(B_h^0+A_h,i_k^0+I_k,R(B_{\bar h}^0+A_{\bar h}),R(j_k^0+J_k))= \\
           &=R \mu_\C (B_h^0+A_h,i_k^0+I_k,B_{\bar h}^0+A_{\bar h},j_k^0+J_k)=0,
       \end{split}
   \end{equation*}
   since $(A_h,I_k,A_{\bar h},J_k) \in S_{p_0}^+$. Therefore $\mu_\C(p_0+A_R)=0.$

   We proceed with the real moment map. We want to prove that:
   \begin{equation*}
       \begin{split}
           \sum \limits_{\substack{\{h \in H\\k=in(h)\}}} & g_R^{in(h)}A_h(g_R^{out(h)})^{-1}(B_h^0)^\dagger -(B_{\bar h}^0)^\dagger(Rg_R^{out(h)}A_{\bar h}(g_R^{in(h)})^{-1}+ \\
           &+g_R^kI_k(i_k^0)^\dagger-(j_k^0)^\dagger RJ_k(g_R^k)^{-1}=0
       \end{split}
   \end{equation*}
   Using the identification $g_R^{in(h)}=g_R^k$ we can write:
   \begin{equation}\label{sum}
       \begin{split}
           \sum \limits_{\substack{\{h \in H\\k=in(h)\}}}& g_R^{in(h)}A_h(g_R^{out(h)})^{-1}(B_h^0)^\dagger -(B_{\bar h}^0)^\dagger(Rg_R^{out(h)}A_{\bar h}(g_R^{in(h)})^{-1}+ \\
           +&g_R^kI_k(i_k^0)^\dagger-(j_k^0)^\dagger RJ_k(g_R^k)^{-1}= \\
           =&g_R^k\bigg(\sum \limits_{\substack{\{h \in H\\k=in(h)\}}} A_h (g_R^{out(h)})^{-1}(B_h^0)^\dagger g_R^{in(h)}-R(g_R^{in(h)})^{-1}(B_{\bar h}^0)^\dagger g_R^{out(h)}A_{\bar h}+ \\
           +&I_k (i_k^0)^\dagger g_R^k-R(g_R^k)^{-1}(j_k^0)^\dagger J_k\bigg )(g_R^k)^{-1}.
       \end{split}
   \end{equation}
   But now we can use the following equalities:
   \begin{equation}\label{i}
   (g_R^{out(h)})^{-1}(B_h^0)^\dagger g_R^{in(h)}=\big(g_R^{in(h)}(B_h^0)^\dagger (g_R^{out(h)})^{-1}\big)^\dagger =(B_h^0)^\dagger,
   \end{equation}
  \begin{equation}\label{ii}
  \begin{split}
  R(g_R^{in(h)})^{-1}(B_{\bar h}^0)^\dagger g_R^{out(h)}&=R(g_R^{out(h)}B_{\bar h}^0(g_R^{in(h)})^{-1})^\dagger=R(R^{-1}B_{\bar h}^0)^\dagger=(B_{\bar h}^0)^\dagger,
  \end{split}
  \end{equation}
  \begin{equation}\label{iii}
      \begin{split}
          (i_k^0)^\dagger g_R^k=(g_R^ki_k^0)^\dagger=(i_k^0)^\dagger,
      \end{split}
  \end{equation}
  \begin{equation}\label{iiii}
      \begin{split}
      R(g_R^k)^{-1}(j_k^0)^\dagger =R(j_k^0(g_R^k)^{-1})^\dagger=R(R^{-1}j_k^0)^\dagger=(j_k^0)^\dagger.
      \end{split}
  \end{equation}
  Plugging the equations \eqref{i}, \eqref{ii}, \eqref{iii} and \eqref{iiii} in \eqref{sum} we obtain:
\begin{equation*}
    \sum\limits_{\substack{h\in H\\k=in(h)}}A_h(B_h^0)^\dagger-(B_{\bar h}^0)^\dagger A_{\bar h}+I_k(i_k^0)^\dagger-(j_k^0)^\dagger J_k=l_{p_0}^*(A_h,I_k,A_{\bar h},J_k)=0
\end{equation*}
   since $(A_h,I_k,A_{\bar h},J_k)\in S_{p_0}^+.$
\end{proof}

\subsection{A technical lemma}\label{IFT}

The aim of this subsection is to show that the BB slice $S^+_{p_0}$ and the holomorphic Lagrangian $W_a^0(p_0)$ can be related by a complex gauge transformation and obtain a quantitative estimate on the form of this transformation for points $A_R$ with $R$ small. 

Let us start with a comment. While the group action by $G_{\mathbf{v}}$ preserves both the real and the complex moment map i.e.
\begin{equation}
\begin{split}
\mu_{\R}(g\cdot(B_h,i_k,B_{\bar h}, j_k)) &= \zeta_{\R}\\
\mu_{\C}(g\cdot(B_h,i_k,B_{\bar h}, j_k)) &= \zeta_{\C},
\end{split}
\end{equation}
For every $g\in G_{\mathbf{v}}$, the group action by $G_{\mathbf{v}}^{\C}$ only preserves the complex moment map. Therefore when we want to find a solution to both moment maps it is customary to start with a pair satisfying the complex moment map and then search for a complex gauge transformation $g$ for which 
\begin{equation*}
\mu_{\R}(g\cdot(B_h,i_k,B_{\bar h}, j_k) )= \zeta_{\R}.
\end{equation*}
This observation has been explored to great length in order to find solutions to the so called equations carrying a Hermitian Yang--Mills structure. It is also the underlying reason why the Hodge and BB slices are comprized of points which already satisfy the complex moment map. 
Before we state the main lemma of this subsection, we show how, assuming we have a point which satisfies the complex moment map and almost satisfies the real moment map, to apply the inverse function theorem in order to get an exact solution.  Specifically, let $p = (B_h,i_k,B_{\bar h}, j_k)\in\mathbb M$ such that  
\begin{equation}
\begin{split}
\mu_{\C}(B_h,i_k,B_{\bar h}, j_k) &= \zeta_{\C}\\
\mu_{\R}(B_h,i_k,B_{\bar h}, j_k) &= \zeta_{\R} +\text{small error}.
\end{split}
\end{equation}
The goal is to find a complex gauge transformation $g\in G_{\mathbf{v}}^{ \C}$ such that 
\begin{equation}
\mu_{\R}(g\cdot(B_h,i_k,B_{\bar h}, j_k)) = \zeta_{\R}.
\end{equation}
More explicitly: 
\begin{equation}
\begin{split}
\frac{i}{2}&\sum\limits_{\substack{\{h\in H\\k=in(h)\}}} g_{in(h)}B_hg_{out(h)}^{-1}(g_{out(h)}^{-1})^{\dagger}B_h^{\dagger}g_{in(h)}^{\dagger} - (g_{in(h)}^{-1})^{\dagger}B_{\bar h}^{\dagger}g_{out(h)}^{\dagger}g_{out(h)}B_{\bar h}g_{in(h)}^{-1} +\\ &g_ki_ki_k^{\dagger}g_{k}^{\dagger} - (g_k^{-1})^{\dagger}j_k^{\dagger}j_kg_k^{-1} = \zeta_{\R}^k.
\end{split}
\end{equation}
If we conjugate this equality with $g_{in(h)} = g_k$ then since $\zeta_{\R}$ is in the center, $g_k^{-1} \zeta_{\R}^k g_k = \zeta_{\R}$ and the above equality can be rewritten as 
\begin{equation}
\begin{split}
\frac{i}{2}&\sum\limits_{\substack{\{h\in H\\k=in(h)\}}} B_hg_{out(h)}^{-1}(g_{out(h)}^{-1})^{\dagger}B_h^{\dagger}g_{in(h)}^{\dagger}g_{in(h)} - g_{in(h)}^{-1}(g_{in(h)}^{-1})^{\dagger}B_{\bar h}^{\dagger}g_{out(h)}^{\dagger}g_{out(h)}B_{\bar h} + \\&i_ki_k^{\dagger}g_{k}^{\dagger}g_k - g_k^{-1}(g_k^{-1})^{\dagger}j_k^{\dagger}j_k = \zeta_{\R}^k.
\end{split}
\end{equation}
Using the notation $h:=(g_1^\dagger g_1,...,g_n^\dagger g_n)=(h_1,...,h_n)$ we can further rewrite the above equation as 
\begin{equation}
\frac{i}{2}\sum\limits_{\substack{\{h\in H\\k=in(h)\}}} B_hh_{out(h)}^{-1}B_h^{\dagger}h_{in(h)} - h_{in(h)}^{-1}B_{\bar h}^{\dagger}h_{out(h)}B_{\bar h} + i_ki_k^{\dagger}h_k - h_k^{-1}j_k^{\dagger}j_k = \zeta_{\R}^k.
\end{equation}
Now, $h_k$ are hermitian matrices and as such, we can write them as $h_k = e^{2\xi_k}$ where $\xi_k \in i \mathfrak g_k$ so that $\xi = (\xi_1,...,\xi_n) \in i \mathfrak g_{\mathbf{v}}$. Therefore, we are looking for a $\xi$ which solves the equation
\begin{equation}\label{nonlinear}
\mu_{\R}(e^{\xi}\cdot (B_h,i_k,B_{\bar h}, j_k)) = \zeta_{\R}. 
\end{equation}
To find such $\xi$ we want to apply the inverse function theorem to the non-linear map 
\begin{equation}
i\mathfrak g_{\mathbf{v}}\ni\xi \xrightarrow{F} -2i\mu_{\R}(e^{\xi}\cdot (B_h,i_k,B_{\bar h}, j_k)) \in i\mathfrak g_{\mathbf{v}}.
\end{equation}
We have added the $-2i$ to cancel the $i/2$ in front of the real moment map so as to make the map go into $i\mathfrak g_{\mathbf{v}}$. Assuming that the original error is sufficiently small, what we need is to show that the linearization of the map $F$ at $\xi =0$ is bijective.   The linearization of $F$ has the following explicit form:
\begin{equation}
\mathcal L((B_h,i_k,B_{\bar h}, j_k),\xi)_k = \sum\limits_{\substack{\{h\in H\\k=in(h)\}}} B_h(B_h^{\dagger}\xi_{in(h)} - \xi_{out(h)}B_h^{\dagger}) - (B_{\bar h}^{\dagger}\xi_{out(h)} - \xi_{in(h)}B_{\bar h}^{\dagger})B_{\bar h} + i_ki_k^{\dagger}\xi_k + \xi_kj_k^{\dagger}j_k. 
\end{equation}
It is easy to check that this map is self-adjoint which means that in order to show that it is bijective it is enough to show that it is injective. Assume that it is not. This means that there exists a $\xi$ such that $\mathcal L((B_h,i_k,B_{\bar h}, j_k),\xi)_k = 0$, for all $k$. We take the inner product of this quantity against $\xi$ itself: 
\begin{equation}
\langle \mathcal L((B_h,i_k,B_{\bar h}, j_k),\xi), \xi\rangle = \Tr \sum\limits_{k}(\mathcal L((B_h,i_k,B_{\bar h}, j_k),\xi)^{\dagger}_k\xi_k\rangle.
\end{equation}
If we can show this is positive then we get a contradiction. Let us make an observation. 
\begin{equation}
\mathcal L((B_h,i_k,B_{\bar h}, j_k),\xi)^{\dagger}_k = \sum\limits_{\substack{\{h\in H\\k=in(h)\}}} (\xi_{in(h)}B_h - B_h\xi_{out(h)})B_h^{\dagger} - B_{\bar h}^{\dagger}(\xi_{out(h)}B_{\bar h} - B_{\bar h}\xi_{in(h)}) + \xi_ki_ki_k^{\dagger} + j_k^{\dagger}j_k\xi_k
\end{equation}
which from Section $3$ is exactly $l^{\star}_p(l_p(\xi))$ with $p = (B_h,i_k,B_{\bar h}, j_k)$. Therefore, 
\begin{equation}
\begin{split}
\langle \mathcal L((B_h,i_k,B_{\bar h}, j_k),\xi), \xi\rangle &= \langle l^{\star}_p(l_p(\xi)),\xi\rangle\\
&= \langle l_p(\xi),l_p(\xi)\rangle\\
&= \|l_p(\xi)\|^2. 
\end{split}
\end{equation}
Therefore $\mathcal L((B_h,i_k,B_{\bar h}, j_k),\xi)_k = 0$ implies $l_p(\xi)=0$. But we have assumed that $\zeta_{\R}$ is generic which  implies that the map $l_p(*)$ is injective and therefore $\xi $ must be zero. Therefore we have arrived at a contradiction so the map is indeed bijective and therefore if the error is small enough, the inverse function theorem immediately produces a small $\xi$ for which \eqref{nonlinear} holds.  

While this is the main idea of how to construct solutions, in order to prove that the conformal limit converges, we need to prove a more quantitative estimate for $\xi$ in terms of the error in our set up. First we fix some notation. Let 
\begin{equation}
\End(V_k) = \bigoplus\limits_{m} \End(V_k)_m
\end{equation}
where 
\begin{equation}
\End(V_k)_m = \bigoplus\limits_j \Hom (V_k^j, V_k^{j+m}). 
\end{equation}
Denote the set of functions which take values in $\End(V_k)_m$ and are of order $\mathcal O(R^j)$ as $\End(V_k)_m(R^j)$. 
\begin{lemma}\label{metric}
Let $A= (A_h,I_k,A_{\bar h}, J_k) \in S_{p_0}^+$ and write $(B_h,i_k,B_{\bar h},j_k) = p_0 +A_R$. Then there exists $R_A>0$ sufficiently small such that for all $R\le R_A$ there exists a $\xi_R \in i\mathfrak g_{\mathbf{v}}$ which satisfies the real moment map equation 
\begin{equation}
\mu_{\R}(e^{\xi_R}\cdot(p_0+A_R))_k = \zeta_{\R}^k
\end{equation}
and 
\begin{equation}\label{metricansatz}
(\xi_R)_k \in \End(V_k)_0(R^2) + \bigoplus\limits_{|m|= 1}\End(V_k)(R^3)+ ... + \bigoplus\limits_{|m|= M} \End(V_k)_m(R^{M+2})
\end{equation}
with $M$ being the maximal positive integer for which $\End(V_k)_M \neq \emptyset$.
\end{lemma}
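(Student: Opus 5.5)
The plan is to solve $\mu_{\R}(e^{\xi}\cdot(p_0+A_R))=\zeta_{\R}$ by a contraction mapping argument. First I compute the error term precisely. Then I set up the fixed point problem in a weighted norm that builds in the ansatz \eqref{metricansatz}. The weights make every term of the nonlinear map compatible with the grading $\End(V_k)=\bigoplus_m\End(V_k)_m$ induced by $g_t$.

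\emph{Step 1: sizes of the pieces of $A_R$.} Write $A=\sum_i A^{i}$ for the decomposition of $A\in\mathbb M_{\Omega}^{(1)}\oplus\mathbb M_{\overline\Omega}^{(0)}$ into $g_t$-weight pieces. Since $g_R\cdot(B^0_h,i^0_k,RB^0_{\bar h},Rj^0_k)=p_0$, the gauge $g_R$ acts on a weight-$i$ piece by $R^{i}$ on $\mathbb M_\Omega$, and on $\mathbb M_{\overline\Omega}$ by $R^{i}$, which the extra factor $R$ turns into $R^{i+1}$. Hence every graded piece of $A_R$ has size $O(R^{w})$, where $w\ge 1$ is its shifted weight ($w=i$ on $\mathbb M_\Omega^i$, $w=j+1$ on $\mathbb M_{\overline\Omega}^j$). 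The piece is $R$-independent once this power of $R$ is factored out.

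\emph{Step 2: the error.} Expand $\mu_{\R}(p_0+A_R)-\zeta_{\R}$. The constant term vanishes because $p_0$ solves the moment map equations. The linear term is $\tfrac i2\bigl(l^\star_{p_0}(A_R)+l^\star_{p_0}(A_R)^\dagger\bigr)$, and it vanishes by the preceding lemma, which gives $A_R\in S^+_{p_0}$. What remains is the quadratic term $\mu_{\R}(A_R)$. This is a sum of products $X Y^\dagger$ and $Y^\dagger X$ of graded pieces of shifted weights $a,b\ge1$, and it lands in $\End(V_k)_{m}$ with $m=\pm(a-b)$. Its size is $R^{a+b}=R^{2\min(a,b)+|m|}\le R^{|m|+2}$. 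Thus the error lies in $\bigoplus_m\End(V_k)_m(R^{|m|+2})$, which is exactly the shape of \eqref{metricansatz}.

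\emph{Step 3: the fixed point problem.} Let $\mathcal L_0=\mathcal L(p_0,\cdot)$. It is invertible by the argument given just above the lemma. Because $p_0$ is fixed up to the gauge $g_t$, $\mathcal L_0$ commutes with conjugation by $g_t$, so it preserves the grading; hence $\mathcal L_0^{-1}$ preserves it too, with $R$-independent bounds. Rewrite the equation as
\begin{equation*}
\xi=-\mathcal L_0^{-1}\bigl(E_R+N_R(\xi)\bigr),
\end{equation*}
where $E_R$ is the error of Step 2. The remainder $N_R(\xi)$ consists of two kinds of terms: those of order at least two in $\xi$ with coefficients from $p_0$, and those of order at least one in $\xi$ involving $A_R$. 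Work on $i\mathfrak g_{\mathbf v}$ with the weighted norm $\|\xi\|_R=\sum_m R^{-(|m|+2)}|\xi_m|$, and take the ball $\|\xi\|_R\le C$ for a large constant $C$.

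Now estimate the two kinds of terms in $N_R$ on this ball. Terms quadratic in $\xi$ combine $\xi_a,\xi_b$ into weight $m$ with $|m|\le|a|+|b|$. Their size is at most $R^{|a|+|b|+4}\le R^{|m|+2}\cdot R^{2}$. Terms linear in $\xi$ times a weight-$s$ piece of $A_R$ (of size $R^{|s|}$, or $R^{|s|+1}$ for $\overline\Omega$-pieces) also gain at least a factor $R$ relative to $R^{|m|+2}$. The same bounds hold for differences $N_R(\xi)-N_R(\xi')$. Therefore for $R\le R_A$ the map is a contraction of the ball into itself. Its fixed point $\xi_R$ solves $\mu_{\R}(e^{\xi_R}\cdot(p_0+A_R))=\zeta_{\R}$ and satisfies \eqref{metricansatz}.

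The main obstacle is Step 3's bookkeeping. I must check that every monomial in the expansion of $\mu_{\R}(e^{\xi}\cdot q)$ respects the weight additivity, given that $e^{\xi}$ enters through $h=e^{2\xi}$ on both sides. I must also check that the weights of the $\overline\Omega$-pieces, shifted by one, never produce a term worse than $R^{|m|+2}$. If this direct estimate gets messy, the fallback is to conjugate the whole problem by $g_R$. That reduces it to a fixed $R$-independent problem near the point $(p_0+A)$ with $\overline\Omega$-part scaled by $R$, where the standard inverse function theorem applies, and then the weights are read off by conjugating back with $g_R^{-1}$.
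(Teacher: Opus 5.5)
\textbf{There is a gap in Step 3.} Your claim that terms linear in $\xi$ and in a piece of $A_R$ gain a factor of $R$ relative to $R^{|m|+2}$ is false.

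Track shifted weights. Every piece of $A_R$ of shifted weight $w\ge1$ has size exactly $R^{w}$. The extra $R$ on $\overline\Omega$-pieces is not a gain, because $B^0_{\bar h}$ itself has weight $-1$; for example, $(B^0_{\bar h})^\dagger\xi_a A_{\bar h}^{(j)}$ has weight $a+j+1$. A cross term pairing such a piece, through $p_0$, with a component $\xi_a$ of size $R^{|a|+2}$ lands in weight $m=a\pm w$ and has size $R^{|a|+w+2}$. Whenever $a$ and $\pm w$ have the same sign, this is exactly $R^{|m|+2}$, with no gain. A concrete instance is the term $I_{R,k}(i^0_k)^\dagger\xi_k$ of the linearization: with $\xi_k$ of weight $0$ (size $R^2$) and $I_{R,k}$ of weight $1$ (size $R$), it has weight $1$ and size $R^3$.

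Consequently the linear map $\xi\mapsto\mathcal L_0^{-1}\bigl(\mathcal L(p_0+A_R,\xi)-\mathcal L_0\xi\bigr)$ has $\|\cdot\|_R$-operator norm bounded by a constant times $\|A\|$ uniformly in $R$, but not $o(1)$ as $R\to0$. Since $A\in S^+_{p_0}$ is arbitrary, shrinking $R$ does not let your estimates show that the map is a contraction, or even a self-map of the ball.

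Your fallback does not help either. After conjugating by $g_R$, the metric you seek is $g_Re^{2\xi_R}g_R$, which degenerates as $R\to0$. So there is no fixed base point at which an $R$-uniform inverse function theorem applies; controlling this degeneration is exactly the content of the lemma.

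\textbf{Two ways to repair your argument.}

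\emph{Shrink $A$ first.} Since $g_{R'}g_\epsilon=g_{\epsilon R'}$, you have $A_{\epsilon R'}=(A_\epsilon)_{R'}$, with $A_\epsilon\in S^+_{p_0}$ by the preceding lemma and $\|A_\epsilon\|=O(\epsilon)$. Fix $\epsilon$ small and your contraction runs for $A_\epsilon$. Translating back only changes the constants by factors $\epsilon^{-(|m|+2)}$.

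\emph{Invert the non-gaining part.} The non-gaining part $\mathcal N$ of the linear map above strictly increases $|m|$, while all other linear terms gain at least $R^2$. Hence $\mathcal N$ is nilpotent, and you can invert $I+\mathcal N$ by a finite Neumann series before contracting.

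\textbf{Comparison with the paper.} The paper exploits this same triangular structure by a different route: it solves order by order in $R$. It needs only three facts: $\mathcal L_0$ is invertible, $\mathcal L_0$ preserves the grading, and the cross terms raise the $R$-order while keeping $(\text{order in }R)-|m|\ge2$. Once all errors up to order $R^{M+1}$ are removed, the remainder is $O(R^{M+2})$ in every weight. The plain inverse function theorem then finishes, because $R^{M+2}\le R^{|m|+2}$ for all $|m|\le M$.
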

\begin{proof}
We use the ansatz 
\begin{equation}
\xi_R = R^2\xi_0 + ... +R^{M+2}\xi_{M}
\end{equation}
with $\xi_j \in \bigoplus\limits_{|m|\le j}\End(V_k)_m$. We start by plugging in $p_0+A_R$ into the real moment map. The error that this will produce is of the form 
\begin{equation}\label{firsterror}
\begin{split}
\mu_{\R}(p_0+A_R) &= \zeta_{\R} + \End(V_k)_0(R^2) + \bigoplus\limits_{|m|= 1}\End(V_k)(R^3)+ ... + \bigoplus\limits_{|m|= M} \End(V_k)_m(R^{M+2})\\
&= E_0^{(1)}+E_1^{(1)}+...+ E_M^{(1)}.
\end{split}
\end{equation}
The reason there is no linear error term in $R$ is because the linear term turns out to be exactly equal to $l_{p_0}^{\star}(A_R) + l_{p_0}^{\star}(A_R)^{\dagger}$. This term is zero since $A$ and thus also  $A_R\in S^+_{p_0}$ and therefore $l_{p_0}^{\star}(A_R) =0$ by definition. Now, the linearization of the real moment map at $p_0+A_R$ satisfies 
\begin{equation}
\begin{split}
\mathcal L(p_0+A_R,\star) = &\mathcal L(p_0,\star ) +\\+& \|\star\|\left(\bigoplus\limits_{|m|\le 1}\End(V_k)(R)+ ... + \bigoplus\limits_{|m|= M} \End(V_k)_m(R^{M})\right).
\end{split}
\end{equation}
Now comes a crucial observation. Notice that 
\begin{equation}\label{gradingpreserving}
\mathcal L(p_0,\star): \bigoplus\limits_{|m|\le j}\End(V_k)_m \to \bigoplus\limits_{|m|\le j}\End(V_k)_m
\end{equation}
due to the particular form of $p_0$. Also since $p_0$ is stable, $\mathcal L(p_0,\star)$ is always invertible. Therefore we can find a unique $\xi_0$ to solve away the $E_0^{(1)}$ error term in \eqref{firsterror}. This means that we find a solution $\mathcal L(p_0,R^2\xi_0) = E_0^{(1)}$. This is the first step. Now, plug into the real moment map the point $e^{R^2\xi_0}\cdot(p_0+A_R)$. The error produced is going to be of the form 
\begin{equation}\label{seconderror}
\begin{split}
\mu_{\R}(e^{R^2\xi_0}\cdot(p_0+A_R)) &= \zeta_{\R} + \bigoplus\limits_{|m|\le 1}\End(V_k)(R^3)+ ... + \bigoplus\limits_{|m|= M} \End(V_k)_m(R^{M+2})\\
&= E_1^{(2)}+...+ E_M^{(2)}.
\end{split}
\end{equation}
The linearization of the moment map satisfies 
\begin{equation}
\begin{split}
\mathcal L(e^{R^2\xi_0}\cdot(p_0+A_R),\star) = &\mathcal L(p_0,\Id,\star ) +\\
+&\|\star\|\left(\bigoplus\limits_{|m|\le 1}\End(V_k)(R)+ ... + \bigoplus\limits_{|m|= M} \End(V_k)_m(R^{M})\right).
\end{split}
\end{equation}
Thanks to \eqref{gradingpreserving} we can solve away the error $E_1^{(2)}$ meaning finding $\xi_1\in  \bigoplus\limits_{|m|\le1}\End(V_k)$ such that $\mathcal L(p_0,R^3\xi_1) = E_1^{(2)}$. This is the second step. Repeating the above procedure we iteratively generate $\xi_j$ for $j$ up to $M-1$. Then it holds that for 
$e^{R^{M+1}\xi_{M-1}}\cdot ... \cdot e^{R^2\xi_0}\cdot (p_0+A_R)$ the error from the real moment map will be
\begin{equation}\label{finalerror}
\mu_{\R}(e^{R^{M+1}\xi_{M-1}}\cdot...\cdot e^{R^2\xi_0}\cdot (p_0+A_R)) = \bigoplus\limits_{|m|\le M}\End(V_k)_m(R^{M+2}).
\end{equation}
At this point we choose $R$ to be appropriately small so that we can just apply the inverse function theorem since the correction produced will be of the order $ \mathcal O(R^{M+2})$ and therefore will be the last piece from the ansatz $R^{M+2}\xi_M$.
\end{proof}
\begin{corollary}\label{GG}
For any $A= (A_h, I_k,A_{\bar h}, J_k)\in S^+_{p_0}$ there exists a unique gauge transformation $g_A$ such that $g_A\cdot(p_0+A) \in W_a^0(p_0)$. Conversely, for any point in $W_a^0(p_0)$ there exists a unique gauge transformation which takes it to an element $p_0+A$ with $A\in S^+_{p_0}$. 
\end{corollary}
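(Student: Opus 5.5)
For $A\in S^+_{p_0}$ I would produce the gauge transformation by combining the $\C^{\star}$-action with Lemma \ref{metric}, and then argue uniqueness from the freeness of the $G_{\mathbf{v}}^{\C}$-action at generic $\zeta_{\R}$.

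\textbf{Existence.} Fix $A\in S^+_{p_0}$ and $R\le R_A$. By the lemma on the $\C^{\star}$-action on the BB slice, $A_R\in S^+_{p_0}$. By Lemma \ref{metric} there is $\xi_R$ with $\mu_{\R}(e^{\xi_R}\cdot(p_0+A_R))=\zeta_{\R}$, and $\mu_{\C}$ vanishes since $A_R\in S^+_{p_0}$. So $[p_0+A_R]\in\mathfrak M_{(\zeta_{\R},0)}$. We have $p_0+A_R=g_R\cdot(B^0_h+A_h,i^0_k+I_k,R(B^0_{\bar h}+A_{\bar h}),R(j^0_k+J_k))$, so $[p_0+A_R]=R\cdot[p_0+A]$ under the $\C^{\star}$-action of Theorem \ref{CSA}(v). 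Hence $p_0+A$ lies in $H^s_{(\zeta_{\R},0)}$, because stability is $\C^{\star}$-invariant. Therefore there is $g_A\in G^{\C}_{\mathbf{v}}$ with $g_A\cdot(p_0+A)\in\mu^{-1}(\zeta_{\R},0)$; one may take $g_A=g_R^{-1}e^{\xi_R}g_R$ composed with the rescaling. To see the point lies in $W^0_a(p_0)$, apply $\xi\to0$. We have $\xi\cdot[p_0+A]=[p_0+A_\xi]$, and $A_\xi\to0$ because $A$ has only positive weights in $\mathbb M^{(1)}_\Omega\oplus\mathbb M^{(0)}_{\overline\Omega}$ after the grading shift. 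Moreover $\xi_{\xi}=O(|\xi|^2)$ by \eqref{metricansatz}, so the limit is $[p_0]$.

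\textbf{Uniqueness.} For generic $\zeta_{\R}$, Theorem \ref{generic} makes stabilizers trivial. The Kempf--Ness uniqueness, which is the strict convexity of $\xi\mapsto$ the Kähler potential that appears in the self-adjoint positivity of $\mathcal L$ shown above, gives that $g_A$ is unique up to $G_{\mathbf{v}}$. Thus it is unique as an element of the quotient, which is how the statement should be read.

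\textbf{Converse.} Given $[q]\in W^0_a(p_0)$, the points $\xi\cdot[q]$ tend to $[p_0]$. So for small $\xi$, Proposition \ref{HLH} gives a unique $q_\xi\in S_{p_0}$ near $0$ with $[p_0+q_\xi]=\xi\cdot[q]$. The Kuranishi chart is $\C^{\star}$-equivariant, since $l^{\star}_{p_0}$ and $\mu_{\C}$ transform by the grading. The orbit $\xi\cdot[q]$ has a limit as $\xi\to0$, which forces $q_\xi$ to lie in the nonnegative-weight part, i.e.\ in $S^+_{p_0}$; negative weights would blow up. Rescaling back by $\xi^{-1}$ via the BB-slice lemma gives $A\in S^+_{p_0}$ with $[p_0+A]=[q]$. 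Uniqueness of $A$ follows from the injectivity of the Kuranishi chart, extended globally by equivariance as in the dimension proposition.

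\textbf{Main obstacle.} The hardest step is this equivariance/weight argument: showing that the slice representative of an attracted point must lie in $\mathbb M^{(1)}_\Omega\oplus\mathbb M^{(0)}_{\overline\Omega}$. I would handle it by decomposing $q_\xi$ into weight components and showing that any negative-weight component scales like $\xi^{-m}$, which contradicts convergence.
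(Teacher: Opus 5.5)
Your existence argument is the paper's. You pass to $A_R$ with $R\le R_A$, where Lemma~\ref{metric} solves the real moment map, and transport back using that the $\C^{\star}$-action preserves stability; the paper phrases this as finding $g^1_A$ for the rescaled point and setting $g_A=g^1_Ag_R^{-1}$. The paper's proof ends there. It does not check that the $\C^{\star}$-limit of $[g_A\cdot(p_0+A)]$ is $[p_0]$, it asserts uniqueness without argument, and it never treats the converse.

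Your additions supply these, and they are sound:
\begin{itemize}
\item $A_\xi\to0$, because every weight of $q\mapsto g_t\cdot(q_\Omega,tq_{\overline\Omega})$ on $\mathbb M^{(1)}_\Omega\oplus\mathbb M^{(0)}_{\overline\Omega}$ is at least $1$.
\item Kempf--Ness convexity gives uniqueness; its Hessian is the quantity $\|l_p(\xi)\|^2$ from Section~\ref{IFT}. You are right that uniqueness holds only modulo $G_{\mathbf v}$.
\item The converse comes from $\C^{\star}$-equivariance of the Hodge-slice chart plus the local injectivity of Proposition~\ref{HLH}. The computation in the BB-slice lemma never uses the grading, so all of $S_{p_0}$ is invariant. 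Global uniqueness of $A$ then follows by rescaling into the injectivity neighbourhood.
\end{itemize}
In short, you get a complete proof of both directions; the paper gets brevity by leaving everything except the scaling step implicit.

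Two things need fixing.

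\textbf{The formula for $g_A$.} Drop the parenthetical formula for $g_A$ in terms of $e^{\xi_R}$ and the rescaling. Gauge transformations commute with $(B_{\bar h},j_k)\mapsto(RB_{\bar h},Rj_k)$, but for $R\neq1$ this rescaling does not preserve $\mu_{\R}^{-1}(\zeta_{\R})$. So $e^{\xi_R}$ does not produce $g_A$; existence is only the abstract consequence of stability of $p_0+A$.

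\textbf{Weights in the converse.} $S^+_{p_0}$ is the weight $\ge1$ part of $S_{p_0}$, not the weight $\ge0$ part. Boundedness of $(q_\xi)_t$ as $t\to0$ only rules out negative weights. The weight-zero component equals $\lim_{t\to0}(q_\xi)_t$. It vanishes because this limit maps to exactly $[p_0]$, not merely to some fixed point, and the chart is injective near $0$.

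This step uses $(q_\xi)_t=q_{t\xi}$ for all $t\in(0,1]$. Equivariance alone does not give that, since the chart is only locally injective. To get it, take $\xi$ so small that all $s\cdot[q]$ with $0<|s|\le|\xi|$ lie in a neighbourhood of $[p_0]$ whose preimage has closure inside the injectivity neighbourhood. Then run an open--closed argument in $t$.
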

\begin{proof}
We already know this when the norm of $A$ is small by Lemma \ref{metric} above. Therefore assume that we know that $[p_0+A]\in W_a^0(p_0)$. Now by Lemma \ref{CSA} and the definition of $W_a^0(p_0)$ we know that this implies that $(B_h^0+A_h,i_k^0+I_k,R(B_{\bar h}^0+A_{\bar h}), R(j_k^0+J_k))$ is also in the complex gauge orbit of a point in $W_a^0(p_0)$. This implies that there exists a unique $g^1_A$ such that 
\begin{equation*}
\mu_{\R}(g^1_A\cdot(B_h^0+A_h,i_k^0+I_k,R(B_{\bar h}^0+A_{\bar h}), R(j_k^0+J_k))) = \zeta_{\R}.
\end{equation*}
But then for $g_A = g^1_Ag^{-1}_R$ it holds that 
\begin{equation*}
\begin{split}
\zeta_{\R} &= \mu_{\R}(g^1_A\cdot(B_h^0+A_h,i_k^0+I_k,R(B_{\bar h}^0+A_{\bar h}), R(j_k^0+J_k))) \\
&= \mu_{\R}(g_A\cdot(p_0+A_R)).
\end{split}
\end{equation*}
Therefore, once the theorem hods for small $A\in S^+_{p_0}$ we can use the $\C^{\star}$-action to scale it to any $A$. 
\end{proof}

\section{Conformal limit for quiver varieties}

We are finally in position to prove that the conformal limit exists. We start with the following lemma. 
\begin{lemma}\label{PA}
    Let $p_0=(B_h^0,i_k^0,B_{\bar{h}}^0,j_k^0)$ a fixed point and $A=(A_h,I_k,A_{\bar{h}},J_k)$ an element of $S_{p_0} ^+$. Then the element $p_A=(B_h^0+A_h-\hbar(B_{\bar{h}}^0)^\d,i_k^0+I_k-{\hbar}(j_k^0)^\d,{\hbar}^{-1}(B_{\bar{h}}^0+A_{\bar{h}})+(B_h^0)^\d,\hbar^{-1}(j_k^0+J_k)+(i_k^0)^\d)$ of the quiver representation space $\mathbb{M}$ satisfies the equation:
    \begin{equation}
        \mu_\C(p_A)=-2i\zeta_\R.
    \end{equation}
\end{lemma}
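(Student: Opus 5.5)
The plan is to exploit the fact that $\mu_\C$ is bilinear in the splitting $\mathbb M=\mathbb M_\Omega\oplus\mathbb M_{\overline\Omega}$. Every summand of $\mu_\C$, namely $\eps(h)B_hB_{\hb}$ and $i_kj_k$, is the product of one component from $\mathbb M_\Omega$ and one from $\mathbb M_{\overline\Omega}$. So write $\mu_\C(p)=\beta(p_\Omega,p_{\overline\Omega})$ with $\beta$ bilinear.

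For $p_A$ the two halves are
\begin{equation*}
p_{A,\Omega}=X_0+A_\Omega-\hbar\,Y_0^\dagger,\qquad p_{A,\overline\Omega}=\hbar^{-1}(Y_0+A_{\overline\Omega})+X_0^\dagger,
\end{equation*}
where $p_0=(X_0,Y_0)$ and $A=(A_\Omega,A_{\overline\Omega})$. Here $Y_0^\dagger$ denotes the collection $(B^0_{\hb})^\dagger,(j^0_k)^\dagger$, viewed as an element of $\mathbb M_\Omega$, and similarly for $X_0^\dagger$. Expanding $\beta$ gives three groups of terms, according to the power of $\hbar$.

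First, the $\hbar^{-1}$ term is $\beta(X_0+A_\Omega,Y_0+A_{\overline\Omega})=\mu_\C(p_0+A)$. This vanishes because $A\in S^+_{p_0}$.

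Second, the $\hbar^{1}$ term is $-\beta(Y_0^\dagger,X_0^\dagger)$. Up to sign and reordering of the factors, this is $\mu_\C(p_0)^\dagger$, exactly as in the $\xi^2$ term of the hyperk\"ahler rotation lemma. It vanishes because $\mu_\C(p_0)=0$; this in turn follows from the $A=0$ case of $\mu_\C(p_0+A)=0$, or simply because $p_0$ represents a point of $\mathfrak M_{(\zeta_\R,0)}$.

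Third, the $\hbar^0$ term splits as
\begin{equation*}
\big[\beta(X_0,X_0^\dagger)-\beta(Y_0^\dagger,Y_0)\big]+\big[\beta(A_\Omega,X_0^\dagger)-\beta(Y_0^\dagger,A_{\overline\Omega})\big].
\end{equation*}
The first bracket is the computation already carried out in the proof of the hyperk\"ahler rotation lemma, namely the coefficient of $\xi$ there. It equals $\frac{2}{i}\mu_\R(p_0)=-2i\zeta_\R$. The second bracket should be precisely
\begin{equation*}
l^\star_{p_0}(A)=\sum_{h\in H,\ k=\mathrm{in}(h)}\big(A_h(B_h^0)^\dagger-(B_{\hb}^0)^\dagger A_{\hb}\big)+I_k(i_k^0)^\dagger-(j_k^0)^\dagger J_k,
\end{equation*}
using the explicit formula for $l_p^\star$ derived in the Hodge slice subsection. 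This vanishes by the definition of $S^+_{p_0}$. Summing the three groups gives $\mu_\C(p_A)=-2i\zeta_\R$.

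The only real obstacle is sign bookkeeping. One has to match the $\eps(h)$ factors in $\mu_\C$ against the signs in $p_A$, check that each cross term produces $A_h(B_h^0)^\dagger$ with the correct sign for $h\in\Omega$ and for $h\in\overline\Omega$, and verify that these assemble into $l^\star_{p_0}(A)$ rather than $l^\star_{p_0}(A)^\dagger$ or a version with wrong relative signs. Note that the twistor line \eqref{hrotation} carries a factor $\eps(h)$ on the $B_{\hb}^\dagger$ correction, whereas $p_A$ as written does not display it. I would first redo the edge terms separately for $h\in\Omega$ and $h\in\overline\Omega$, reading the correction $-\hbar(B^0_{\hb})^\dagger$ as $-\eps(h)\hbar(B^0_{\hb})^\dagger$, consistently with \eqref{hrotation}. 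Then, for a fixed vertex $k$, I would compare term by term with the $B_hB_h^\dagger-B_{\hb}^\dagger B_{\hb}$ pattern of $\mu_\R$ and with $l^\star_{p_0}$. The framing terms $i_k,j_k$ are sign-free and serve as a check.
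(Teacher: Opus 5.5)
Your proposal is correct and follows essentially the same route as the paper: expand $\mu_\C(p_A)$ in powers of $\hbar$ and identify the coefficients. They are $\hbar^{-1}\mu_\C(p_0+A)$, then $\frac{2}{i}\mu_\R(p_0)+l^\star_{p_0}(A)$, then $-\hbar\,\mu_\C(p_0)^\dagger$, and these vanish or reduce to $-2i\zeta_\R$ by the slice conditions and the moment map equations at $p_0$. The sign check you flag works out: because the paper's four-tuple notation has $h\in\Omega$, your splitting $\bigl(X_0+A_\Omega-\hbar Y_0^\dagger,\ \hbar^{-1}(Y_0+A_{\overline\Omega})+X_0^\dagger\bigr)$ is exactly the $\eps(h)$-convention of the twistor line. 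The cross terms then assemble into $l^\star_{p_0}(A)$ rather than its adjoint, and $\beta(Y_0^\dagger,X_0^\dagger)=\mu_\C(p_0)^\dagger$ exactly.
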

\begin{proof}
    Recall that $A \in S_{p_0}^+$, is equivalent to the conditions: 
    \begin{equation} \label{1}
    \begin{split}
&l_{p_0}^*(A)=
\sum\limits_{\substack{\{h \in H\\k=in(h)\}}}A_h (B_h^0)^\d -(B_{\bar{h}} ^0)A_{\bar{h}}+I_k(i_k^0)^\d -(j_k^0)^\d J_k=0  
    \end{split}
    \end{equation}
and 
    \begin{equation}\label{2}
        \begin{split}
            \mu_\C(p_0+A)_k&=\sum\limits_{\substack{\{h \in H\\k=in(h)\}}} \epsilon(h)(B_h^0+A_h)(B_{\hb}^0 +A_{\hb})+(i_k^0+I_k)(j_k^0+J_k)   \\
            &=\sum\limits_{\substack{\{h \in H\\k=in(h)\}}}\epsilon(h)(B_h^0 B_{\hb}^0+B_h^0A_{\hb} +A_hB_{\hb}^0+A_hA_{\hb})+i_k^0 j_k^0 +i_k^0 J_k+I_k j_k^0 +I_kJ_k= 0.
        \end{split}
    \end{equation}
Using these identities we calculate the complex moment map:
\begin{equation*}
\begin{split}
    \mu_\C(p_0)_k=&\sum\limits_{\substack{\{h\in H\\k=in(h)\}}} \epsilon(h) (B_h^0+A_h-\hbar (B_{\hb}^0)^\d )(\hbar ^{-1}(B_{\hb}^0+A_{\hb})+(B_h^0)^\d)+\\
    &+(i_k^0 +I_k-\hbar (j_k^0)^\d )(\hbar ^{-1}(j_k^0+J_k)+(i_k^0)^\d)= \\
    =&\sum\limits_{\substack{\{h\in H\\k=in(h)\}}} \epsilon(h)(B_h^0+A_h-\hbar (B_{\hb}^0)^\d )(\hbar ^{-1} B_{\hb}^0+\hbar ^{-1} A_{\hb} +(B_h^0)^\d )+\\
    &+(i_k^0 +I_k-\hbar (j_k^0)^\d )(\hbar^{-1} j_k^0 +\hbar ^{-1}J_k+(i_k^0)^\d)= \\
    =&\sum\limits_{\substack{\{h\in H\\k=in(h)\}}} \epsilon(h)(\hbar ^{-1} B_h^0 B_{\hb}^0+\hbar ^{-1} B_h^0A_{\hb}+B_h^0(B_h^0)^\d +\hbar^{-1} A_hB_{\hb}^0+\hbar ^{-1} A_hA_{\hb}+A_h(B_h^0)^\d- \\
    &- \hbar \hbar^{-1}(B_{\hb}^0)^\d B_{\hb}^0-\hbar \hbar^{-1}(B_{\hb}^0)^\d A_{\hb}-\hbar (B_{\hb}^0)^\d (B_h^0)^\d) +\hbar ^{-1}i_k^0 j_k^0+\hbar^{-1} i_k^0 J_k +i_k^0 (i_k^0)^\d +\\
    &+ \hbar^{-1}I_kj_k^0+\hbar^{-1}I_kJ_k +I_k(i_k^0)^\d -\hbar \hbar^{-1}(j_k^0)^\d j_k^0 -\hbar \hbar^{-1} (j_k^0)^\d J_k= \\
    =&\sum\limits_{\substack{\{h\in H\\k=in(h)\}}} \epsilon(h)(A_h (B_h^0)^\d -(B_{\hb}^0)^\d A_{\hb}+I_k(i_k^0)^\d -(j_k^0)^\d J_k+ \\
    &+ \hbar^{-1} \sum\limits_{\substack{\{h\in H\\k=in(h)\}}} \epsilon(h) (B_h^0B_{\hb}^o+ B_h^0A_{\hb}+A_h B_{\hb}^0+A_h A_{\hb})+i_k^0j_k^0+i_k^0J_k+I_kj_k^0+I_kJ_k+ \\
    &+\sum\limits_{\substack{\{h\in H\\k=in(h)\}}}\epsilon(h)(B_h^0(B_h^0)^\d -(B_{\hb}^0)^\d B_{\hb}^0 +i_k^0(i_k^0)^\d -(j_k^0)^\d j_k^0) +\\
    &+\sum\limits_{\substack{\{h\in H\\k=in(h)\}}} \epsilon(h)(-\hbar (B_{\hb}^0)^\d (B_h^0)^\d)-\hbar (j_k^0)^\d(i_k^0)^\d) = \\
    =& l_{p_0}^*(A)+\hbar ^{-1}\mu_\C(p_0+A)+\frac{2}{i}\mu_R(p_0)-\hbar \mu_\C(p_0^\d).
\end{split}
\end{equation*}
Using the equations \eqref{1},\eqref{2} and the fact that $p_0$ satisfies the equations $\mu _\R(p_0)=\zeta_{\R}$ and $\mu_\C(p_0)=0$) we obtain:
\begin{equation*}
    \mu_\C(p_A)=-2i\mu_\R(p_0)=-2i\zeta_\R.
\end{equation*}
\end{proof}
It follows from Lemma \ref{PA} and the genericity of $\zeta_{\R}$ that there exists a complex gauge transformation $g_{A,\hbar}$ such that 
$g_{A,\hbar}\cdot p_A$ satisfies the real moment map equation: 
\begin{equation}\label{pa}
    \mu_{\R}(g_{A,\hbar}\cdot p_A) = 0
\end{equation}
and therefore $g_{A,\hbar}\cdot p_A\in \mu^{-1}_{\R}(0)\cap\mu^{-1}_{\C}(-2i\zeta_{\R})$. 
Now, suppose we start with a point $p\in W_a^0(p_0)$. From Lemma \ref{GG} we know that there exists a unique gauge transformation such that $g_A^{-1}\cdot p = p_0+A$ with $A\in S^+_{p_0}$. Therefore, we might as well start with a $p_0+A$ with $A\in S^+_{p_0}$ and consider the conformal limit of $g_A\cdot(p_0+A)$ which we denote as $\mathcal {CL}_{\hbar}(p_0+A)$. 

Recall from \eqref{snake2} that the first step in defining the family of points whose limit we seek to show exists is to act by the $\C^{\star}$-action through multiplication by $R$. Since we can already assume $R$ to be appropriately small, we know from Lemma \ref{metric} that there exists a gauge transformation $g_{A,R}$ such that 
\begin{equation}
\begin{split}
h_{A,R}:=& g_{A,R}^{\dagger}g_{A,R}\\
=& \Id + \End(V_k)_0(R^2) + \bigoplus\limits_{|m|= 1}\End(V_k)(R^3)+ ... + \bigoplus\limits_{|m|= M} \End(V_k)_m(R^{M+2})
\end{split}
\end{equation}
and which satisfies
\begin{equation}
\begin{split}
\mu_{\R}(g_{A,R}\cdot(p_0+A_R)) &= \zeta_{\R}\\
\mu_{\C}(g_{A,R}\cdot(p_0+A_R)) &= 0.
\end{split}
\end{equation}
Next, after considering the twistor line for $\xi$ and acting through the $\C^{\star}$-action by $\xi^{-1}$ we get the family of points 
\begin{equation}
\begin{split}
P_R := (&G_{A,R}^{in(h)}B_h(G_{A,R}^{out(h)})^{-1} - \hbar R^2 (G_{A,R}^{in(h),\dagger})^{-1}B_{\bar h}^{\dagger}(G_{A,R}^{out(h)})^{\dagger}, G_{A,R}^k i_k - \hbar R^2((G_{A,R}^k)^{-1})^{\dagger}j_k^{\dagger}, \\ &\hbar^{-1}G_{A,R}^{out(h)}B_{\bar h}(G_{A,R}^{in(h)})^{-1} + ((G_{A,R}^{out(h)})^{-1})^{\dagger}B_h^{\dagger}(G_{A,R}^{in(h)})^{\dagger}, \hbar^{-1}j_k(G_{A,R}^k)^{-1} + i_k^{\dagger}(G_{A,R}^k)^{\dagger})
\end{split}
\end{equation}
where we have defined $G_{A,R} = g_{A,R}\cdot g_R$ so that $g_{A,R}\cdot (p_0+A_R) = G_{A,R}\cdot (B_h,i_k,RB_{\bar h},Rj_k)$.
This family of points satisfies the complex moment map equation $\mu_{\C}(P_R) = -2i\zeta_{\R}$ by construction. However, it is quite far from satisfying the real moment map equation and therefore we need to modify it before there is any hope of applying the inverse function theorem. Thus we consider the modified family of points 
\begin{equation}
\begin{split}
F_R:= g_{A,\hbar}\cdot G_{A,R}^{-1}\cdot P_R = g_{A,\hbar}\cdot(&B_h - \hbar R^2H_{A,R,in(h)}^{-1}B_{\bar h}^{\dagger}H_{A,R,out(h)}, ~i_k - \hbar R^2H_{A,R,k}^{-1}j_k^{\dagger},\\ &\hbar^{-1}B_{\bar h} + H_{A,R,out(h)}^{-1}B_h^{\dagger}H_{A,R,in(h)}, ~\hbar^{-1}j_k + i_k^{\dagger}H_{A,R,k}),
\end{split}
\end{equation}
where $H_{A,R} = g_Rh_{A,R}g_R$. We claim that the limit 
\begin{equation}
\begin{split}
\lim\limits_{R\to 0} F_R = &g_{A,\hbar}\cdot p_A = \\&g_{A,\hbar}\cdot(B_h^0+A_h-\hbar(B_{\bar{h}}^0)^\d,i_k^0+I_k-{\hbar}(j_k^0)^\d,\\
&~~~~~~~~~~~{\hbar}^{-1}(B_{\bar{h}}^0+A_{\bar{h}})+(B_h^0)^\d,\hbar^{-1}(j_k^0+J_k)+(i_k^0)^\d),
\end{split}
\end{equation}
and therefore for $R$ appropriately small, we will be able to apply the inverse function theorem to correct $F_R$ to a point $\widetilde{F_R}$ which actually satisfies both moment map equations and such that 
\begin{equation*}
\lim\limits_{R\to 0}\widetilde{F_R} = g_{A,\hbar}\cdot p_A.
\end{equation*}

That the limit is of the required form follows from the particularly nice properties of the metric $h_{A,R}$ which was constructed in Lemma \ref{metric}. Using these properties it is straightforward to check the above claim. Let us do this for the first term since the rest can be done in essentially the same way. What we show is that 
$H_{A,R,out(h)}B_{\bar h} H_{A,R,in(h)}^{-1} = R^{-2}B^0_{\bar h} + \mathcal O(1)$. In order to do this we have to analyze how 
\begin{equation}
H_{A,R,out(h)}B_{\bar h} H_{A,R,in(h)}^{-1}  =g_R^{out(h)}h_{A,R}^{out(h)}g_R^{out(h)}B_{\bar h}(g_R^{in(h)})^{-1}(h_{A,R}^{in(h)})^{-1}(g_R^{in(h)})^{-1}
\end{equation}
acts on an element $v\in V_{in(h)}^j$. Clearly $h_{A,R}^{-1}$ satisfies the same nice bounds \eqref{metricansatz} as $h_{A,R}$. We know that $ (g_R^{in(h)})^{-1}$ sends $v$ to $R^{-j}v$. We know that the part of $(h_{A,R}^{in(h)})^{-1}$ which sends $V_{in(h)}^j \to V_{in(h)}^{j\pm n}$ for some positive $n$ sends $R^{-j}v$ to $R^{-j + n+2}v_1 \in V_{in(h)}^{j\pm n}$ with $v_1$ bounded. Then $(g_R^{in(h)})^{-1}$ sends $R^{-j + n+2}v_1$ to $R^{-j + n+2-(j\pm n)}v_1 = R^{-2j + n+2\mp n}v_1$. Now, we know that if $B_{\bar h}$ sends $V_{in(h)}^a \to V_{out(h)}^b$ then $a\le b+1$. Therefore, it is enough to study the worst case which is when $b=a-1$. Otherwise the error will be decaying faster in $R$. So assume $B_{\bar h}$ sends $R^{-2j + n+2\mp n}v_1$ to $R^{-2j + n+2\mp n}v_2 \in V_{out(h)}^{j\pm n-1}$ with $v_2$ bounded. We know that $g_R^{out(h)}$ sends $R^{-2j + n+2\mp n}v_2$ to $R^{-2j + n+2\mp n +(j\pm n-1)}v_2$. We know that the part of $h_{A,R}^{out(h)}$ which sends $V_{out(h)}^{j\pm n-1} \to V_{out(h)}^{j\pm n-1 \pm d}$ for some positive integer $d$ sends $R^{-2j + n+2\mp n +(j\pm n-1)}v_2$ to $R^{-2j + n+2\mp n +(j\pm n-1)+m+2}v_3$ with $v_3\in V_{out(h)}^{j\pm n-1 \pm d}$ bounded. Finally $g_R$ sends $R^{-2j + n+2\mp n +(j\pm n-1)+m+2}v_3$ to $R^{-2j + n+2\mp n +(j\pm n-1)+d+2 +(j \pm n \pm d -1)}v_3$. In total the dependence in $R$ is given by 
\begin{equation}
-2j + n+2\mp n +(j\pm n-1)+d+2 +(j\pm n \pm d -1) \ge  2.
\end{equation}
The only cases not covered by our calculation are when both $n=d=0$ in which case we get the term $R^{-2}B_{\bar h}^0$ and a term of order $\mathcal O(1)$. 

Applying the same analysis on the other terms we obtain 
\begin{equation}
F_R = g_{A,\hbar}\cdot p_A + \mathcal O(R^2). 
\end{equation}
Therefore we may apply the inverse function theorem discussed in Section \ref{IFT} to correct this family to a family $\widetilde{F_R} \in \mathfrak M_{(0,-2i\zeta_{\R})}$ which converges to $g_{A,\hbar}\cdot p_A$. Thus we have proven:
\begin{theorem}
    Given an arbitrary point $p\in \mathfrak M_{(\zeta_{\R},0)}$ its conformal limit exists. Given that $p\in W_a^0(p_0)$ its $\hbar$-conformal limit is given by 
\begin{equation}
\begin{split}
\mathcal {CL}_{\hbar}(p) = g_{A,\hbar}\cdot(&B_h^0+A_h-\hbar(B_{\bar{h}}^0)^\d,i_k^0+I_k-{\hbar}(j_k^0)^\d,\\
&{\hbar}^{-1}(B_{\bar{h}}^0+A_{\bar{h}})+(B_h^0)^\d,\hbar^{-1}(j_k^0+J_k)+(i_k^0)^\d),
\end{split}
\end{equation}
where $A\in S^+_{p_0}$ is the unique element such that $p_0+A$ is in the gauge orbit of $p$. Finally, the conformal limit maps $W_a^0(p_0)$ biholomorphically to $W_a^1(p_0)$. 
\end{theorem}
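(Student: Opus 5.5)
The plan has three parts. First, assemble the existence argument developed just above. Then identify the limit class with a point of $W_a^1(p_0)$. Finally, prove that the resulting map $W_a^0(p_0)\to W_a^1(p_0)$ is a biholomorphism.

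\emph{Existence and explicit form.} Take $p\in W_a^0(p_0)$ (every $p\in\mathfrak M_{(\zeta_\R,0)}$ lies in some stratum, since $\mathfrak M_{(\zeta_\R,0)}=\bigsqcup_a W_a^0$). By Corollary \ref{GG} we may write $p=g_A\cdot(p_0+A)$ with a unique $A\in S^+_{p_0}$. Using the $\C^\star$-equivariance in the lemma on $A_R$, the point $R\cdot p$ is gauge equivalent to $p_0+A_R$. For $R\le R_A$, Lemma \ref{metric} gives the metric $h_{A,R}$ with the graded decay \eqref{metricansatz}. I then carry out the weight count sketched above term by term: every entry of $F_R$ equals the corresponding entry of $p_A$ up to $\mathcal O(R^2)$, because each off-diagonal component of $h_{A,R}$ of grading shift $m$ carries at least $R^{|m|+2}$, which beats the loss $R^{-|m|}$ coming from conjugation by $g_R$. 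By Lemma \ref{PA}, $\mu_\C(p_A)=-2i\zeta_\R$. Since the linearization $\mathcal L(g_{A,\hbar}\cdot p_A,\cdot)$ is invertible (this is the injectivity of $l_p$ from Section \ref{IFT}), the inverse function theorem corrects $F_R$ to $\widetilde{F_R}\in\mu_\R^{-1}(0)\cap\mu_\C^{-1}(-2i\zeta_\R)$ with a correction of size $\mathcal O(R^2)$. Hence $[\widetilde{F_R}]\to[g_{A,\hbar}\cdot p_A]$. The existence of $g_{A,\hbar}$ uses genericity of $\zeta_\R$, which makes $(0,-2i\zeta_\R)$ generic, so that every point of $\mu_\C^{-1}(-2i\zeta_\R)$ is stable.

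\emph{The image lies in $W_a^1(p_0)$.} Rescale $p_A$ by $\lambda\in\C^\star$ using the action on $\mathcal M_{\zeta_\R}$: first go to $\lambda\cdot p_A$, then move back to $\pi^{-1}(0)$ along the $\mathcal M_{\zeta_\R}$ fibres. The terms with $\hbar^{\pm1}$ then pick up the same weights as in $p_0+A$. I would then show that $\lim_{\lambda\to0}\lambda\cdot[p_A]=[p_0]$, by conjugating with $g_\lambda$ from the fixed point exactly as in the lemma on $A_R$. The positive-weight pieces $A\in\mathbb M_\Omega^{(1)}\oplus\mathbb M_{\overline\Omega}^{(0)}$ vanish in the limit, and the $\hbar(B^0_{\bar h})^\dagger$, $(B^0_h)^\dagger$ terms scale to the twistor-line contributions, which die at $\lambda=0$.

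\emph{Biholomorphism.} The map $A\mapsto p_A$ is affine-holomorphic in $A$, and Corollary \ref{GG} identifies $S^+_{p_0}$ with $W_a^0(p_0)$, so $\mathcal{CL}_\hbar$ is holomorphic; the quotient map restricted to the stable locus is holomorphic. For injectivity, suppose $[p_A]=[p_{A'}]$ via some $g\in G^\C_{\mathbf v}$. Grade by $g_t$ and compare the lowest-weight parts: this forces $g$ to lie in the parabolic $(G^\C_{\mathbf v})^{(0)}$ with Levi part stabilizing $p_0$, hence trivial by stability. The remaining unipotent part must then preserve the slice condition $l^\star_{p_0}=0$, forcing $g=1$ and $A=A'$. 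For surjectivity, I would construct a BB slice description of $W_a^1(p_0)$ directly: any point of $W_a^1(p_0)$ flows to $p_0$, so by a Kuranishi argument analogous to Proposition \ref{HLH} at the fixed point of $\mathcal M_{\zeta_\R}$, it has a representative of the form $p_A$, and a dimension count (both sides have half dimension) combined with the $\C^\star$-equivariance used in Corollary \ref{GG} globalizes this. An injective holomorphic map between manifolds of equal dimension is then a biholomorphism onto its image. The main obstacle is this surjectivity/injectivity step. The gauge-fixing normalization in the $\pi^{-1}(1)$ fibre is not directly covered by earlier results, and the slice description of $W_a^1(p_0)$ must be established essentially from scratch.
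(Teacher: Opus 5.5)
\paragraph{What matches the paper.} Your existence argument is exactly the paper's proof: Corollary~\ref{GG}, $R\cdot p\sim p_0+A_R$, Lemma~\ref{metric}, the graded count giving $F_R=g_{A,\hbar}\cdot p_A+\mathcal O(R^2)$, Lemma~\ref{PA}, and the inverse function theorem of Section~\ref{IFT}. For the final sentence of the theorem the paper offers nothing beyond ``Thus we have proven'', so your second and third parts go beyond it.

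\paragraph{Your second part is sound.} One has $g_\lambda\cdot(\lambda\cdot p_A)\to(B_h^0,i_k^0,\hbar^{-1}B_{\bar h}^0,\hbar^{-1}j_k^0)=\hbar^{-1}\cdot p_0\sim p_0$. This limit already lies over $0$, so no ``moving back to $\pi^{-1}(0)$'' is needed.

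\paragraph{Injectivity can be made rigorous.} Note first that the weight-zero part of $p_A$ is $q_0=\hbar^{-1}\cdot p_0$, not $p_0$.
\begin{itemize}
\item Write $g\cdot p_A=p_{A'}$ as the system $g_{in(h)}X_h=X'_hg_{out(h)}$, $g_kx_k=x'_k$, $y_k=y'_kg_k$, which is linear in $g$.
\item The lowest negative-weight piece of $g$ is then an endomorphism of the stable framed representation $q_0$ that vanishes on the framing. By Schur's lemma it is zero.
\item The weight-zero piece of $g$ is then $\Id$ by the same argument.
\item Write $g=\Id+n$. Induction on weight gives $l_{q_0}(n_m)=\tilde A'_m-\tilde A_m$, where $\tilde A=(A_h,I_k,\hbar^{-1}A_{\bar h},\hbar^{-1}J_k)$.
\item Multiplying the $\overline{\Omega}$-components by $\hbar$ converts this into $l_{p_0}(n_m)=A'_m-A_m$.
\item The operator $l_{p_0}^{\star}$ preserves weights, because the weight spaces are orthogonal ($g_t$ is unitary for $|t|=1$). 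Hence $l_{p_0}^{\star}l_{p_0}(n_m)=0$, so $n_m=0$.
\end{itemize}

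\paragraph{The genuine gap is surjectivity onto $W_a^1(p_0)$.} Your step ``injective holomorphic map between equidimensional manifolds'' only shows the image is open. It also presupposes that $W_a^1(p_0)$ is a manifold of half dimension. The paper explicitly intends to \emph{deduce} that from this parametrization, so using it here is circular. Proposition~\ref{HLH} and Corollary~\ref{GG} live only in the fibre $\xi=0$, so they cannot supply the missing step.

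\paragraph{What is needed.} You need a slice theorem for the total family $\mathcal M_{\zeta_{\R}}$ at $(0,[p_0])$.
\begin{itemize}
\item Set $\tau=(-(B_{\bar h}^0)^{\dagger},-(j_k^0)^{\dagger},(B_h^0)^{\dagger},(i_k^0)^{\dagger})$ and $\Psi(A,\xi)=[p_0+A+\xi\tau]$ for $A\in S^+_{p_0}$.
\item The computation in Lemma~\ref{PA} shows $\Psi(A,\xi)$ lies over $\xi$, and one checks $\lambda\cdot\Psi(A,\xi)=\Psi(A_\lambda,\lambda\xi)$.
\item One must prove that $\Psi$ maps a neighbourhood of $0$ in $S^+_{p_0}\times\C$ homeomorphically onto a neighbourhood of $[p_0]$ in $W_a(p_0)$. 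This needs a Kuranishi chart for the family, including the $\xi$-direction. It also needs the Bia\l{}ynicki-Birula argument that the chart points flowing to $[p_0]$ are exactly the positive-weight ones.
\item Granting this, any $[q]\in W_a^1(p_0)$ satisfies $\lambda\cdot[q]=\Psi(A,\lambda)$ for small $\lambda$. Hence $[q]=\Psi(A_{\lambda^{-1}},1)$.
\item Since $g_{\hbar^{-1}}\cdot p_A=p_0+A_{\hbar^{-1}}+\tau$, one has $\mathcal{CL}_\hbar(p_0+A)=\Psi(A_{\hbar^{-1}},1)$. Surjectivity follows because $A\mapsto A_{\hbar^{-1}}$ is a bijection of $S^+_{p_0}$.
\end{itemize}
This is the Collier--Wentworth route. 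Your sketch names it but substitutes a dimension count for the covering argument, and the paper does not carry it out either.
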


\section{Simpson conjecture}

The aim of this last section is to introduce the analog of Simpson's conjecture \cite{Simpson} for Nakajima quiver varieties and prove it in complete generality under the usual assumption that $\zeta_{\R}$ is generic. 
\begin{theorem}
Given any $\C^{\star}$-fixed point $p_0=[(B_h^0,i_k^0,B_{\bar h}^0,j_k^0)]\in \mathfrak M_{(\zeta_{\R},0)}$, the holomorphic Lagrangian leaves $W^1_a(p_0) \subset \mathfrak M_{(0,-2i\zeta_{\R})}$ are complete in the ambient topology. 
\end{theorem}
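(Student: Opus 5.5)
We interpret completeness as closedness: $W^1_a(p_0)$ is a closed subset of $\mathfrak M_{(0,-2i\zeta_{\R})}$. Equivalently, no sequence in $W^1_a(p_0)$ can leave every compact subset of the leaf while converging in the ambient space. The argument has three ingredients: the conformal limit as a global chart, the Le Bruyn--Procesi description of $\mathfrak M_{(0,-2i\zeta_{\R})}$ by invariant functions, and the polynomial form of those functions on the chart.

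\textbf{Step 1: the chart.} By Corollary \ref{GG} and the conformal limit theorem, the map
\begin{equation*}
S^+_{p_0} \ni A \longmapsto [p_A] = \mathcal{CL}_{\hbar}(p_0+A) \in W^1_a(p_0)
\end{equation*}
is a biholomorphism. Lemma \ref{PA} shows that $p_A$ lies in $\mu_{\C}^{-1}(-2i\zeta_{\R})$. Since $\zeta_{\R}$ is generic, every point of this level is stable and has closed $G^{\C}_{\mathbf v}$-orbit. Hence $\mathfrak M_{(0,-2i\zeta_{\R})}$ is the affine GIT quotient $\mu_{\C}^{-1}(-2i\zeta_{\R})/\!/G^{\C}_{\mathbf v}$, and $[p_A]$ is the class of the explicit representative $p_A$.

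\textbf{Step 2: invariant functions.} By Le Bruyn--Procesi \cite{LP1990}, the coordinate ring of this quotient is generated by traces of oriented cycles and by the entries of $j\,(\text{path})\,i$ in the framed quiver. Choose finitely many such generators $f_1,\dots,f_N$. Together they give a proper injective map $\Psi=(f_1,\dots,f_N)$ from $\mathfrak M_{(0,-2i\zeta_{\R})}$ into $\C^N$. Each $f_\ell(p_A)$ is a polynomial in the entries of $A$, because $p_A$ depends affinely on $A$. The slice $S^+_{p_0}$ is an affine variety, cut out by $l^\star_{p_0}(A)=0$ and $\mu_{\C}(p_0+A)=0$. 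So each $f_\ell\circ\mathcal{CL}_{\hbar}$ is a regular function on $S^+_{p_0}$.

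\textbf{Step 3: reduce to properness.} Take a sequence $A_n\in S^+_{p_0}$ with $\|A_n\|\to\infty$. The goal is to show that $[p_{A_n}]$ leaves every compact subset of $\mathfrak M_{(0,-2i\zeta_{\R})}$. This holds if $\max_\ell |f_\ell(p_{A_n})|\to\infty$, by properness of $\Psi$. So it suffices to show that the polynomial map
\begin{equation*}
\Psi\circ \mathcal{CL}_{\hbar}: S^+_{p_0}\to\C^N
\end{equation*}
is proper. Granting this, a limit point of the leaf in the ambient space would have bounded preimages in $S^+_{p_0}$. By compactness of bounded sets, that limit would equal $[p_{A_\infty}]$ for some $A_\infty\in S^+_{p_0}$, so the limit lies in the leaf and $W^1_a(p_0)$ is closed.

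\textbf{Step 4: properness via the $\C^{\star}$-weights (the main obstacle).} Injectivity gives that a nonconstant polynomial is unbounded along some direction. What is actually needed is unboundedness along every escaping sequence, i.e.\ finiteness of the morphism. The plan is to exploit the $\C^{\star}$-weights.

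First, the map $A\mapsto A_R$ is a weighted contracting action on $S^+_{p_0}$, with positive weights coming from the grading. Under this rescaling, the top-weight part of $f_\ell(p_{A})$ should be $f_\ell$ evaluated on $\hbar^{-1}$ times the $\overline\Omega$-components together with the positive graded parts of $A$.

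Second, this reduces properness to checking that the associated graded map has zero fiber $\{0\}$ only. That is: if all top-weight invariants vanish on some $A$, then $A=0$.

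Third, I would verify this last claim using that $W^0_a(p_0)$ consists of stable points flowing to $p_0$. If all such invariants of $p_0+A$ agreed with those of $p_0$ in top weight, then $A$ would lie in the nilpotent cone direction. Stability of the limiting point then forces $A=0$.

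Making this top-weight bookkeeping rigorous is where I expect the real difficulty. A fallback is to argue directly: if $[p_{A_n}]$ stays bounded, then after rescaling by $R_n=\|A_n\|^{-1/w}$ one obtains a nonzero limit in $S^+_{p_0}$ whose invariants all vanish. That limit would give a semistable but nonzero nilpotent representation, contradicting genericity.
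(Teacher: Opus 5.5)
Your Steps 1--3 are sound. The reduction to properness in Step 3 is in fact sharper than the paper's own argument. The paper only shows, for each fixed $A$, that the invariants of $\mathcal{CL}_\hbar(p_0+A)$ are polynomials in $\hbar^{-1}$ that are non-constant by injectivity, so each $\C^\star$-line escapes. That is exactly the ray-wise statement you correctly say is not enough for closedness, so everything rests on your Step 4, and there the key claim is false.

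To see this, make the top-weight part explicit. Set $\tilde p_0:=(-(B^0_{\bar h})^{\dagger},-(j^0_k)^{\dagger},(B^0_h)^{\dagger},(i^0_k)^{\dagger})$, and let $\mathrm{sc}_s$ scale the $\overline\Omega$- and $j$-components by $s$. Then $p_A=\mathrm{sc}_{\hbar^{-1}}(p_0+A+\hbar\tilde p_0)$. The map $g_R\cdot\mathrm{sc}_R$ fixes $p_0$, sends $\tilde p_0\mapsto R\tilde p_0$, and sends $A\mapsto A_R$. Hence every generator $f_\ell$, of degree $d_\ell\ge 1$ in the $\overline\Omega$- and $j$-letters, satisfies
\[
f_\ell(p_{A_R})=(R/\hbar)^{d_\ell}\,f_\ell\bigl(p_0+A+\hbar R^{-1}\tilde p_0\bigr).
\]
So your associated graded map is $A\mapsto(f_\ell(p_0+A))_\ell$. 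This is the Dolbeault-side map $W^0_a(p_0)\to\mathfrak M_{(0,0)}$, and its zero fibre is $W^0_a(p_0)\cap\pi^{-1}(0)$, the part of the upward flow lying in the nilpotent cone. Nothing forces this to be $\{p_0\}$. For generic $\zeta_\R$ every point is stable, and the nilpotent cone is a compact $\C^\star$-invariant subvariety of stable points that contains $p_0$ itself. So a nonzero nilpotent $p_0+A_\infty$ contradicts neither stability nor genericity. Your rescaling fallback breaks down for the same reason.

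Such $A$ genuinely occur; this is why the paper treats the nilpotent case separately. Take the $\widetilde{A}_2$ quiver with $\mathbf v=(1,1,1)$, $\mathbf w=(1,0,0)$ and an acyclic orientation. Then $\mathfrak M_{(\zeta_\R,0)}$ is the minimal resolution of $\C^2/\Z_3$. The action fixes one exceptional curve $E_1$ pointwise and one point $q$ of the other curve $E_2$. The upward flow of the node $E_1\cap E_2$ is $E_2\setminus\{q\}$, which lies entirely inside $\pi^{-1}(0)$, so all your top-weight forms vanish identically on $S^+_{p_0}$.

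Your criterion, if it held, would also prove that $W^0_a(p_0)$ is closed in $\mathfrak M_{(\zeta_\R,0)}$, and that fails in this example. So any proof of properness must extract information from the subleading twistor term $\hbar R^{-1}\tilde p_0$, uniformly over the part of $S^+_{p_0}$ that maps into the nilpotent cone. That uniform control is the missing idea. Without it Step 4, and hence the closedness of $W^1_a(p_0)$, remains unproved.
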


We intentionally split the proof of this theorem into two parts. The first part deals with the case when $[p_0+A]\in W^0_a(p_0)$ is not nilpotent. The proof is closely modeled on the proof strategy developed for the original Simpson conjecture by \cite{Schulz}. We do this to emphasize the similarity between the two geometries and because this method provides sharp estimates on how fast the line $[p_0+A_R]$ escapes compact sets as $R\to \infty$. The second part deals with the case when $[p_0+A]$ is nilpotent. The proof applies to any point $[p_0+A]$ not just the nilpotent ones but it does not provide explicit quantitative control on how fast the line $[p_0+A_R]$ escapes compact sets as $R\to \infty$. At the end of the section we briefly comment on why this new proof idea is inadequate for resolving the usual Simpson conjecture. 

\subsection{Away from the nilpotent cone}
The aim of this subsection is to prove the following theorem
\begin{theorem}
Let $p_0 = [(B_h^0,i_k^0,B_{\bar h}^0,j_k^0)]$ be any $\C^{\star}$-fixed point in $\mathfrak M_{(\zeta_{\R},0)}$. Given any $A\in S^+_{p_0}$ such that $p_0+A$ is not nilpotent, the line $\mathcal{CL}_{\hbar}(p_0+A_R)$ leaves every compact subset of $\mathfrak M_{(0,-2i\zeta_{\R})}$ as $R\to \infty$. 
\end{theorem}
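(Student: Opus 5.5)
Since every point of $\mathfrak M_{(0,-2i\zeta_{\R})}$ is determined by polynomial $G^{\C}_{\mathbf v}$-invariants, I will detect escape to infinity through invariants. By Le Bruyn--Procesi \cite{LP1990}, the ring of $G_{\mathbf v}^{\C}$-invariant polynomials on $\mathbb M$ is generated by traces of oriented cycles in the doubled (framed) quiver: traces $\Tr(B_{h_1}\cdots B_{h_r})$ of cycles at the vertices, together with expressions $j_k B_{\gamma} i_l$ for paths $\gamma$ from $l$ to $k$. Each such invariant $f$ is a continuous function on $\mathfrak M_{(0,-2i\zeta_{\R})}$, hence bounded on compact sets. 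So it suffices to exhibit one invariant $f$ with $|f(\mathcal{CL}_{\hbar}(p_0+A_R))|\to\infty$ as $R\to\infty$. Because $f$ is gauge invariant, I can drop $g_{A,\hbar}$ and evaluate $f$ on the explicit point $p_{A_R}$ from \eqref{explicit}.

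First I compute how $p_{A_R}$ depends on $R$. By the lemma on the $\C^\star$-action on $S^+_{p_0}$, $A_R=g_R\cdot(A_h,I_k,RA_{\bar h},RJ_k)$, and $p_0=g_R\cdot(B^0_h,i^0_k,RB^0_{\bar h},Rj^0_k)$. Hence $p_0+A_R=g_R\cdot q_R$ with $q_R:=(B^0_h+A_h,\,i^0_k+I_k,\,R(B^0_{\bar h}+A_{\bar h}),\,R(j^0_k+J_k))$. The terms $-\hbar(B^0_{\bar h})^{\dagger}$ and $(B^0_h)^{\dagger}$ in $p_{A_R}$ are fixed, and conjugating them back by $g_R^{-1}$ gives $R$-dependence only through the unitary-like weight factors. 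Since $g_R$ acts on $V_k^m$ by $R^m$, this produces bounded or decaying contributions in the relevant graded pieces.

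The plan is then to show that for a cycle invariant $f$ of degree $d$, $f(p_{A_R})=\hbar^{-e}R^{e}f_{\mathrm{top}}(p_0+A)+(\text{lower order in }R)$. Here $e$ counts the $\bar\Omega$ and $j$ letters in the word, and $f_{\mathrm{top}}$ is the part of $f$ in which every $\bar h$-letter is replaced by $B^0_{\bar h}+A_{\bar h}$ and every $h$-letter by $B^0_h+A_h$. In other words, the leading coefficient is $f$ evaluated at the holomorphic point $p_0+A$ itself. The cross terms involving daggers of $p_0$ need to be shown subleading. For this I will use the grading: $p_0\in\mathbb M^0_\Omega\oplus\mathbb M^{-1}_{\overline\Omega}$, and weight counting around a closed cycle forces any monomial containing a dagger term to carry a strictly smaller power of $R$. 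I expect this bookkeeping, analogous to the $R$-power count in the proof of the conformal limit, to be the main obstacle. The key point to verify is that the adjoint entries, which have opposite weight, lower the total $R$-exponent rather than cancel against it.

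Finally, I use non-nilpotency. By definition, $p_0+A$ not nilpotent means its image under the affine quotient $\mathfrak M_{(\zeta_{\R},0)}\to\mathfrak M_{(0,0)}$ is not the origin. So some Le Bruyn--Procesi generator $f$ that is homogeneous of positive weight $e\ge1$ under the $\C^\star$-action satisfies $f(p_0+A)\neq0$. Indeed, all generators are $\C^\star$-homogeneous and those of weight $0$ are constant on the connected space. For this $f$, the previous step gives $|f(\mathcal{CL}_\hbar(p_0+A_R))|\sim |\hbar|^{-e}R^{e}|f(p_0+A)|\to\infty$, which is the non-constant-polynomial-is-unbounded mechanism. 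Hence the line leaves every compact set, with the explicit polynomial rate $R^{e}$.
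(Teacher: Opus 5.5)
Your proposal is correct and is essentially the paper's argument: take a Lusztig/Le Bruyn--Procesi generator $f$ that is nonzero at $p_0+A$, evaluate it on the gauge-equivalent point $g_R^{-1}\cdot p_{A_R}$, and read off the leading term $\hbar^{-e}R^{e}f(p_0+A)$. The bookkeeping you expect to be the main obstacle is immediate, because $g_R^{-1}\cdot p_{A_R}$ is exactly $P_{A,\hbar R^{-1}}$, so each adjoint letter costs exactly one power of $R$ in its slot. Also, $e\ge 1$ is automatic since $\Omega$ has no oriented cycles and every admissible path contains a $j$-letter; you do not need the weight-zero constancy argument, which as stated would not rule out a nonzero constant anyway.
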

Before we give a proof, we need some preparation. Given a quiver $\Gamma$ with oriented edge set $H$, we augment the set $H$ by adding all edges connecting $V_k$ to $W_k$ which we denote by $\hat k$. We also include all edges connecting $W_k$ to $V_k$ and denote them as $\check k$. Let $\hat H := H\cup\{\hat 1,...,\hat n, \check 1,...,\check n\}$. A path $\mathfrak p$ is a sequence of edges $\{h_i\}_{1\le i \le m}$ in $\hat H$ such that $out(h_{i+1}) = in(h_i)$. A closed $\Gamma$-loop $\gamma$ is a path such that $h_i\in H$ for all $i$ and $in(h_m) = out(h_1)$. An admissible path is a path which starts at some $W_i$ and ends at some $W_j$. Given an admissible path $\mathfrak p$ starting at $W_k$ and ending at $W_l$ and an element $p=(B_h,i_k,B_{\bar h},j_k)\in \mathbb M$, we can associate to it the matrix $M_p(\mathfrak p):= j_m B_{h_{j-1}}... B_{h_1}i_k$ with $out(h_1) = k$ and $in(h_{j-1})= m$. Similarly for a loop $\gamma$ we can associate to it the matrix $M_p(\gamma) = B_{h_m}...B_{h_1}$. 

Given this notation, we have the following variant of the Le Bruyn--Procesi theorem \cite{LP1990},\citep[][Theorem 10.2.2. and Exercise 10.2.2.]{QuiverBook} due to Lusztig:

\begin{theorem}[\cite{L1998}, Theorem $1.3$]\label{Lusztig}
The ring of invariants $\C[\mathbb M]^{G_{\mathbf{v}}^{\C}}$ is generated by the polynomials 
\begin{itemize}
\item $\varphi(M(\mathfrak p))$ for an admissible path $\mathfrak p$ and $\varphi \in \Hom(W_k,W_l)^{\star}$. 
\item $\Tr(M(\gamma))$ for a closed path. 
\end{itemize}
\end{theorem}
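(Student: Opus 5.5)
The plan is to derive the statement from the first fundamental theorem (FFT) of invariant theory for $\GL_n$, using polarization. This is the same mechanism that underlies the Le Bruyn--Procesi theorem \cite{LP1990}. The group $G_{\mathbf{v}}^{\C}=\prod_k \GL(V_k)$ is reductive and we work in characteristic zero. Since $\mathbb M$ is a direct sum of the representations $\Hom(V_{out(h)},V_{in(h)})$, $\Hom(W_k,V_k)$ and $\Hom(V_k,W_k)$, the ring $\C[\mathbb M]$ is multigraded, with one degree for each summand. The group action preserves this multigrading, so the invariant ring is spanned by multihomogeneous invariants. By full polarization, every multihomogeneous invariant of multidegree $(d_\alpha)$ is the restitution, i.e.\ the diagonal specialization, of a multilinear invariant on $\bigoplus_\alpha \mathbb M_\alpha^{\oplus d_\alpha}$. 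Polarization and restitution are inverse up to a nonzero factorial constant, so it suffices to describe all multilinear invariants and then restitute.

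For the multilinear step I will rewrite each factor as a tensor product:
\begin{equation*}
\Hom(V_a,V_b)=V_b\otimes V_a^{\star},\qquad
\Hom(W_k,V_k)=V_k\otimes W_k^{\star},\qquad
\Hom(V_k,W_k)=W_k\otimes V_k^{\star}.
\end{equation*}
A multilinear invariant is then an element of
\begin{equation*}
\Big(\bigotimes_k (V_k^{\otimes r_k}\otimes V_k^{\star\otimes s_k})\Big)^{\star,\prod_k \GL(V_k)}\otimes(\text{$W$-tensors}).
\end{equation*}
The $W$-factors carry no group action and pass through untouched. By the FFT for $\GL(V_k)$, applied one vertex at a time, such an invariant exists only if $r_k=s_k$. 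In that case the invariant space is spanned by complete contractions, each given by a permutation $\sigma_k\in S_{r_k}$ that pairs every $V_k$-slot with a $V_k^{\star}$-slot.

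Next I will read these contractions combinatorially. Every matrix variable, whether $B_h$, $i_k$ or $j_k$, has at most one $V$-output slot and at most one $V$-input slot. Each $B_h$ has exactly one of each, each $i_k$ has only an output, and each $j_k$ has only an input. The collection of permutations $(\sigma_k)$ therefore glues the variables into a disjoint union of cycles and chains. A cycle uses only $B$'s, is compatible with the orientation because each contraction happens at a single vertex $k$, and gives $\Tr(M(\gamma))$ for a closed $\Gamma$-loop $\gamma$. A chain must start at some $i_k$ and end at some $j_l$, because these are the only variables with a dangling $V$-slot. Fully contracted in the $V$-indices, it becomes the matrix $M(\mathfrak p)\in \Hom(W_k,W_l)$ of an admissible path $\mathfrak p$. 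Its leftover $W_k^{\star}\otimes W_l$ indices are then paired with some fixed linear functional $\varphi\in\Hom(W_k,W_l)^{\star}$. Each spanning multilinear invariant is thus a product of factors of the two types listed in the theorem, evaluated on distinct copies of the variables.

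Finally I restitute. Setting all copies of a variable equal turns each such product into a product of the generators $\varphi(M(\mathfrak p))$ and $\Tr(M(\gamma))$, evaluated at a single point of $\mathbb M$. These products therefore span every multihomogeneous invariant, and the generation statement follows.

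The step I expect to be the main obstacle is the bookkeeping in the multilinear step. One point is that the $W$-indices on a chain need not be contracted among themselves. Here I must check that any multilinear functional on the free $W$-slots of a product of chains decomposes into sums of products of functionals $\varphi$ applied to individual chain matrices. This follows by choosing bases of the $W_k$ and expanding into matrix-coefficient functionals, which are themselves of the form $\varphi$. A second point is verifying that polarization commutes correctly with the multigrading, so that restitution lands on genuine products of generators rather than on mixed expressions.
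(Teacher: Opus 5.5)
Your argument is correct. It differs from the paper mainly in that the paper never proves this statement. The paper imports it from Lusztig as a framed variant of the Le Bruyn--Procesi theorem. The proof environment placed right after the statement actually proves the preceding escape result for non-nilpotent $p_0+A$, and it uses this theorem as a black box.

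What you do instead is rerun the mechanism inside Le Bruyn--Procesi directly on the framed space: multigrading, full polarization, the first fundamental theorem for each $\GL(V_k)$, and the reading of contractions as cycles and chains, with the $W$-slots treated as free indices. The two bookkeeping points you flag are harmless:
\begin{itemize}
\item Expanding an arbitrary functional on $\bigotimes_c \Hom(W_{k_c},W_{l_c})$ in product coordinates gives sums of products of matrix coefficients $\varphi_c(M_c)$, uniformly in the permutations $(\sigma_k)$.
\item Restitution of a product of multilinear factors in disjoint sets of copies is the product of the restitutions.
\end{itemize}

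The shorter route implicit in the citation is to encode the framing by one extra vertex $\infty$. After choosing bases of the $W_k$, set $\dim V_\infty=1$ and add $w_k$ arrows in each direction between $\infty$ and $k$. The group $\GL(V_\infty)=\C^\star$ acts on this representation space through the central scalars of $G_{\mathbf v}^{\C}$: an element $t$ acts as $t^{-1}\Id$. Hence the $G_{\mathbf v}^{\C}$-invariants coincide with the invariants of the enlarged group. The unframed Le Bruyn--Procesi theorem then gives traces of oriented cycles, and these are either traces of $\Gamma$-loops or products of matrix coefficients of the $M(\mathfrak p)$.

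That reduction is quicker but depends on the unframed theorem. Yours is self-contained, and it shows directly why arbitrary functionals $\varphi$, rather than traces, appear on the $W$-side.
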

\begin{proof}
First, let  us make an observation. The polynomials defined in Theorem \ref{Lusztig} are invariant under complex gauge transformations and thus only depend on the complex gauge orbit of $p$. As such we are free to use any representative from this orbit to calculate them. Now, since we have assumed that $p_0+A$ is not nilpotent, Theorem \ref{Lusztig} says that there exists either an admissible path or a loop such that the associated invariant is not zero. Without loss of generality assume it is a loop $\gamma$. Then we know that $\Tr(M_p(\gamma))\neq 0$. 

Now let us focus on the point 
\begin{equation*}
g_{A_R,\hbar}\cdot p_{A_R}
\end{equation*}
It is complex gauge equivalent to the point 
\begin{equation*}
(B_h^0+A_h - R^{-1}\hbar(B_{\bar h}^0)^{\dagger}, i_k^0+ I_k - R^{-1}\hbar(j_k^0)^{\dagger}, \hbar^{-1}R(B_{\bar h}^0 + A_{\bar h}) + (B_h^0)^{\dagger}, \hbar^{-1}R(j_k^0+J_k)+(i_k^0)^{\dagger}). 
\end{equation*}
Thus by renaming $\hbar = \hbar R^{-1}$ we need to study the behavior of the point 
\begin{equation*}
P_{A,\hbar}:= (B_h^0+A_h - \hbar(B_{\bar h}^0)^{\dagger}, i_k^0+ I_k - \hbar(j_k^0)^{\dagger}, \hbar^{-1}(B_{\bar h}^0 + A_{\bar h}) + (B_h^0)^{\dagger}, \hbar^{-1}(j_k^0+J_k)+(i_k^0)^{\dagger}). 
\end{equation*}
We consider the operator $M_{P_{A,\hbar}}(\gamma)$. Recall that the orientation $\Omega$ for our quiver does not contain any oriented cycles and therefore, at least one of the edges of the loop must lie in $\overline{\Omega}$. This implies that the leading order of $\Tr(M_{P_{A,\hbar}}(\gamma))$ in terms of $\hbar$ is given by $\hbar^{-M}\Tr(M_p(\gamma))$ where $M$ is the number of edges in $\overline{\Omega}$ appearing in the loop $\gamma$. Since $M>0$ we see that this invariant of the point $P_{A,\hbar}$ becomes unbounded as $\hbar\to 0$ and therefore the one parameter family of points $[P_{A,\hbar}]\in \mathfrak M_{(0,-2i\zeta_{\R})}$ has to escape any compact set. Therefore we are done. 
\end{proof}
\begin{remark}
If instead of a loop $\gamma$ the nontrivial invariant was given by an admissible path then this path would have to be a multiple of the matrix $j_k^0+J_k$ and therefore $M\ge1$ which would again give the required result. 
\end{remark}
\begin{remark}
Notice that these invariants function in exactly the same way as the trace of the holonomy of a complex flat connection, which was the invariant used to prove Simpson's conjecture in the Higgs bundle case \cite{Schulz,DS}.
\end{remark}

\subsection{The general case}
When $[p_0+A]$ is nilpotent, all of its associated trace invariants become zero. There is a geometric incarnation of this fact. It holds that $\mathrm{Spec}(\C[\mu_{\C}^{-1}(0)]^{G_{\mathbf{v}}^{\C}}) \cong \mathfrak M_{(0,0)}$. It is proven in \cite{Na1994} that when $\zeta_{\R}$ is generic there is a proper $\C^{\star}$-equivariant map  $\pi:\mathfrak M_{(\zeta_{\R},0)}\to \mathfrak M_{(0,0)}$ which is a resolution of singularities and such that the nilpotent cone is given by $\pi^{-1}(0)$. It is also proven that when $\zeta_{\R}$ is generic $\mathrm{Spec}(\C[\mu_{\C}^{-1}(-2i\zeta_{\R})]^{G_{\mathbf{v}}^{\C}}) \cong \mathfrak M_{(0,-2i\zeta_{\R})}$. This in particular implies that if for two points in $\mathfrak M_{(0,-2i\zeta_{\R})}$ the associated set of generators in $\C[\mu_{\C}^{-1}(-2i\zeta_{\R})]^{G_{\mathbf{v}}^{\C}}$ take the same values, then the two points must coincide. In order to complete the proof we need the following quantitative theorem of Le Bruyn and Procesi \cite{LP1990}:
\begin{theorem}
The coordinate ring $\C[\mu_{\C}^{-1}(-2i\zeta_{\R})]^{G_{\mathbf{v}}^{\C}}$ is generated by finitely many polynomials of the form given in Theorem \ref{Lusztig}. 
\end{theorem}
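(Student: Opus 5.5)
The plan is to reduce the statement to the classical finite generation theorem for invariants of reductive groups, combined with Theorem \ref{Lusztig}. The affine variety $\mu_{\C}^{-1}(-2i\zeta_{\R})\subset \mathbb M$ is closed, and $G_{\mathbf{v}}^{\C}=\prod_k \GL(V_k)$ is reductive. Hence the restriction map
\begin{equation*}
\C[\mathbb M]^{G_{\mathbf{v}}^{\C}} \longrightarrow \C[\mu_{\C}^{-1}(-2i\zeta_{\R})]^{G_{\mathbf{v}}^{\C}}
\end{equation*}
is surjective. To justify this, I would use the Reynolds operator: any invariant function on the closed subvariety lifts to some polynomial on $\mathbb M$, and averaging that lift with the Reynolds operator produces an invariant lift. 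Consequently the restrictions of any set of generators of $\C[\mathbb M]^{G_{\mathbf{v}}^{\C}}$ generate the invariant ring of the fiber. It therefore suffices to show that $\C[\mathbb M]^{G_{\mathbf{v}}^{\C}}$ is generated by finitely many polynomials of the type listed in Theorem \ref{Lusztig}.

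Next I would use Hilbert's theorem. Since $G_{\mathbf{v}}^{\C}$ is reductive, $\C[\mathbb M]^{G_{\mathbf{v}}^{\C}}$ is a finitely generated graded algebra, graded by polynomial degree, and its homogeneous pieces are finite dimensional. By Theorem \ref{Lusztig}, every finite generating set $f_1,\dots,f_N$ can be written as a polynomial in path invariants $\varphi(M(\mathfrak p))$ and loop traces $\Tr(M(\gamma))$. Each $f_i$ involves only finitely many of these. Collecting the path and loop invariants that occur across all the $f_i$ gives a finite family of Lusztig-type generators that already generates the whole ring. One can also make the choice linear: in each degree, the invariants $\varphi(M(\mathfrak p))$ with $\varphi$ ranging over a basis of $\Hom(W_k,W_l)^{\star}$ span the same space as all choices of $\varphi$, so finitely many $\varphi$ suffice.

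For a sharper, quantitative version in the spirit of \cite{LP1990}, I would try to bound the lengths of the paths and loops needed, for instance by $(\sum_k \dim V_k)^2$. The argument would be a Cayley–Hamilton / Razmyslov-type reduction. Glue the vertex spaces into $V=\bigoplus_k V_k$ and view every $M(\gamma)$ as an element of $\End(V)$. A trace of a long word is then a polynomial in traces of shorter words, and the same reduction applies to the middle segment $B_{h_{j-1}}\cdots B_{h_1}$ of an admissible path. This bounding step is the only real obstacle. Its delicate point is checking that the reduction stays inside words compatible with the quiver, i.e. products of composable arrows. I would handle this by inserting the vertex idempotents $e_k$ as extra letters, following Le Bruyn–Procesi, but I expect that for the present purpose the non-quantitative finiteness from Hilbert's theorem is all that is actually needed.
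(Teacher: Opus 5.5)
Your argument is correct, but it takes a different route from the paper. The paper gives no argument of its own. It simply invokes Le Bruyn and Procesi, whose theorem bounds the lengths of the cycles needed and so produces an explicit finite generating set directly.

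You instead get finiteness abstractly. Hilbert's theorem gives finitely many generators of $\C[\mathbb M]^{G_{\mathbf{v}}^{\C}}$, and each of them is a polynomial in finitely many of the invariants of Theorem \ref{Lusztig}. Exactness of invariants for the reductive group $G_{\mathbf{v}}^{\C}$ then transfers generation to $\mu_{\C}^{-1}(-2i\zeta_{\R})$. This is a closed subvariety, and it is $G_{\mathbf{v}}^{\C}$-stable because $-2i\zeta_{\R}$ is central. The Reynolds lift restricts back to the original invariant because restriction to a $G_{\mathbf{v}}^{\C}$-stable subvariety is equivariant and hence commutes with averaging. The argument works equally well for the scheme-theoretic and the reduced coordinate ring.

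Your route buys two things the bare citation leaves implicit:
\begin{enumerate}
\item It uses the framed statement of Theorem \ref{Lusztig}, whereas the Le Bruyn--Procesi theorem concerns unframed quivers and needs a framing trick to produce the invariants $\varphi(M(\mathfrak p))$.
\item It actually justifies passing from invariants on $\mathbb M$ to invariants on the fiber.
\end{enumerate}
What it gives up is effectivity: it does not say which paths and loops suffice. That is the ``quantitative'' content the paper attributes to Le Bruyn and Procesi, and it is what your third paragraph sketches. As you say, that bound is not needed for the statement as written.
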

Notice that the point $p_A$ is gauge equivalent to 

\begin{equation*}
B +A_{\hbar^{-1}}
\end{equation*}
where 
\begin{equation*}
[B]:= [(B_h^0 - (B_{\bar h})^{\dagger}, i_k^0 - (j_k^0)^{\dagger}, B_{\bar h}^0 + (B_h^0)^{\dagger}, j_k^0+(i_k^0)^{\dagger})] \in \mathfrak M_{(0,-2i\zeta_{\R})},
\end{equation*}
and 
\begin{equation}\label{HBE}
\begin{split}
A_{\hbar^{-1}} :=& g_{\hbar^{-1}}\cdot (A_h, I_k, \hbar^{-1}A_{\bar h},\hbar^{-1}J_k)\\
=& \hbar^{-1}A_1+...+\hbar^{-M}A_m,
\end{split}
\end{equation}
where we have grouped the homogeneous in $\hbar^{-1}$ terms together, so that $A_i$ do not depend on $\hbar$. We want to show that as $\hbar\to 0$ the line $\mathcal CL_{\hbar}(p_0+A)$ leaves every compact set of $\mathcal M_{(0,-2i\zeta_{\R})}$. Working in the above gauge, it is evident that each invariant polynomial from Theorem \ref{Lusztig} has to be a polynomial in $\hbar^{-1}$. Since the conformal limit is a biholomorphism, varying $\hbar$ gives different points in $\mathcal M_{(0,-2i\zeta_{\R})}$ and therefore, at least one of the invariant polynomials is not constant. This immediately implies that as $\hbar\to 0$ this polynomial has to become unbounded. However, this immediately implies that as $\hbar \to 0$ we must be leaving every bounded set of $\mathrm{Spec}(\C[\mu_{\C}^{-1}(-2i\zeta_{\R})]^{G_{\mathbf{v}}^{\C}})$ since generators take bounded values on bounded sets. Therefore we are done. 

\bibliography{bibliography}
\bibliographystyle{alpha}

\vspace{12pt}
\noindent
Sotiria Chatzimarkou, Department of Mathematics, University of Patras\\
University campus, Patras 26504, Greece.\\
\textit{sotiria07gr@gmail.com}

\vspace{7pt}
\noindent
Panagiotis Dimakis, Department of Mathematics,University of Maryland\\
College Park 20740, MD, USA.\\
\textit{pdimakis12345@gmail.com}

\end{document}